\newtheorem{theorem}{\bf{Theorem}}
\newtheorem{lemma}{\bf{Lemma}}
\newtheorem{definition}{\bf{Definition}}
\newtheorem{assumption}{Assumption}
\renewcommand{\P}{\text{Pr}}
\newcommand{\BR}{\text{BR}}
\newcommand{\uniform}{\text{Unif}}
\begin{document}

\title{   
Decentralized Learning for Optimality in Stochastic Dynamic Teams and Games with Local Control and Global State Information
\thanks{A conference version \cite{YAYCDC19} was presented at the 2019 Conference on Decision and Control and serves as an announcement of the partial results presented here without details. The conference version \cite{YAYCDC19} does not contain the results on weakly acyclic games or any of the proofs presented here.}
\thanks{This research was supported in part by the Natural Sciences and Engineering Research Council (NSERC) of Canada.}}

\author{Bora~Yongacoglu, G\"urdal Arslan, and  Serdar Y\"uksel
\thanks{B. Yongacoglu and S. Y\"uksel are with the Department of Mathematics and Statistics, Queen's University, Kingston, ON, Canada, email: \{1bmy,yuksel@queensu.ca\}.
G. Arslan is with the University of Hawaii. Email: \{gurdal@hawaii.edu\}.}}

\maketitle

\begin{abstract}

Stochastic dynamic teams and games are rich models for decentralized systems and challenging testing grounds for multi-agent learning. %There are several existing algorithms for learning in stochastic games, including some with guarantees of convergence to equilibrium. However, there may be high cost suboptimal equilibria in cooperative games where team optimality is the goal. 
Previous work that guaranteed team optimality assumed stateless dynamics, or an explicit coordination mechanism, or joint-control sharing. In this paper, we present an algorithm with guarantees of convergence to team optimal policies in teams and common interest games. The algorithm is a two-timescale method that uses a variant of Q-learning on the finer timescale to perform policy evaluation while exploring the policy space on the coarser timescale. Agents following this algorithm are ``independent learners": they use only local controls, local cost realizations, and global state information, without access to controls of other agents. The results presented here are the first, to our knowledge, to give formal guarantees of convergence to team optimality using independent learners in stochastic dynamic teams and common interest games.	

\end{abstract}

\begin{IEEEkeywords}
Stochastic games; Stochastic optimal control; Cooperative control; Game Theory; Machine learning.
\end{IEEEkeywords}

\IEEEpeerreviewmaketitle

\section{Introduction} \label{sec:intro}

In modern control engineering applications, two challenges are becoming increasingly common:  online problems and decentralization. In online problems, the system to be controlled is not initially known by the agent and must be learned. In decentralized systems, several autonomous decision-makers act in a shared environment. This paper is concerned with multi-agent reinforcement learning (MARL), which is at the intersection of these two challenges. We use stochastic games to model the shared environment, and we present algorithms suitable for stochastic dynamic teams under a particular decentralized information structure.

In online problems, important knowledge of the system to be controlled is initially unavailable to the controller. Classical methods for solving control problems, such as linear programming, dynamic programming, and convex analytic methods, cannot be implemented without access to the system model. Instead, the control agent must use observed feedback to learn control policies. Reinforcement learning has had considerable success in single-agent control problems, both in applications and in theory, where methods such as Q-learning \cite{watkins,watkinsDayan,tsitsiklis} recover optimal policies when used in a stationary environment.

A second challenge comes from decentralization. Decentralized systems are characterized by multiple agents acting in a common environment with some local information available to each. The costs incurred by one agent in a decentralized system depend, in general, on its own actions, the actions of other agents, and the history of the system. Such coupled interactions are common in complex, real-world engineering applications. Some examples of systems that are inherently decentralized are sensor networks, stochastic networked control systems, Internet of Things, and energy systems.

Compared to the success of reinforcement learning in stationary single-agent problems, there are relatively few formal results on MARL. This is partly explained by the loss of stationarity: when multiple learning agents interact, a given agent will change its behaviour to exploit learned information. From the point-of-view of the remaining agents, this agent is a part of the environment, and so the environment is non-stationary \cite{hernandez2017survey}. Consequently, one of the fundamental assumptions made for single-agent theory does not hold in MARL, and theoretical guarantees do not carry over.

Stochastic games \cite{fink1964equilibrium,altman1996non,ding2013stochastic,6464723} generalize both repeated games \cite{fudenberg1991game} and Markov decision problems (MDPs). Like repeated games, players in stochastic games must be strategic and respond to the policies used by other agents. Unlike repeated games, in which the same stage game is played at every time step, the stage game played at a given time in a stochastic game depends on the history of play, which is summarized by a state. As in MDPs, agents in stochastic games must select actions with the state process and its long-term cost implications in mind. As stochastic games provide a rich model for dynamic, strategic decision making, they are a popular framework for studying MARL \cite{littman1994markov}.

Stochastic dynamic teams \cite{ho1980team,yukselBasar} and common-interest games  \cite{AUMANN19895,TAKAHASHI2005231} model cooperative systems and so are of special interest to decentralized control. In teams, all players incur the same stage costs and interests are perfectly aligned. Common interest games generalize teams in a natural way: in common interest games, agents to not necessarily incur identical stage costs, but there are a subset of joint policies which each agent strictly prefers to all other policies. Despite the incentive to coordinate behavior in common interest games, coordination is generally difficult in online problems when information is decentralized.

As we will outline in detail in Section \ref{sec:LitReview}, there are relatively few theoretical results for stochastic games without control-sharing. Even when assuming full state observability at each agent rather than the more general assumption of partial state observability, there are no rigorous results that guarantee team optimality in truly stochastic teams without relying on control-sharing.

\subsection{Contributions} In this paper, we present a decentralized learning algorithm for playing stochastic common interest games, a class of games which model decentralized control problems and contain stochastic teams as a special case. We give formal guarantees of convergence to team optimal policies without use of control sharing among agents.

\begin{itemize}
\item[(i)] In Theorem~\ref{theorem1}, we consider stochastic common interest games and introduce an algorithm (that only uses local cost and local action history and the common state of the system) that provably converges to a team optimal policy in a probabilistic sense that is made precise in the theorem. What makes this algorithm different from our prior work \cite{AY2017}, which guaranteed convergence to equilibrium but not team optimal policies, is the utilization of a finite window of the most recent (noisy) aggregate cost scores to adaptively estimate the lowest possible cost for each decision maker. %\sout{Unlike much of the relevant work, this algorithm does not require control action sharing, and each decision maker can be completely oblivious to the presence of other decision makers in the system.}
\item[(ii)] Theorem~\ref{weaklyTheorem} considers a specific implementation of our main algorithm in the context of weakly acyclic games. We show this algorithm leads to equilibrium policies in weakly acyclic games and, furthermore, if the game is also a common interest game then play will settle to a team optimal policy. This theorem strengthens one of the main results from \cite{AY2017}.
\item[(iii)] In Theorem \ref{th:constasp}, we obtain convergence to team optimality in the stronger sense of almost sure convergence by using constant, preset aspiration levels. This result requires a stronger assumption that the preset aspiration levels separate the team optimal policies from the other policies. Theorem~\ref{th:constasp} also describes the long-run behaviour of the algorithm with constant aspirations when used in a general stochastic game.
%\item[(iv)] In Theorem~\ref{th:rg}, we consider the simpler case of repeated games where the same stage game is repeated by decision makers with no look ahead noise-free cost readings, and obtain more general and stronger convergence results than those in Theorem~\ref{theorem1} by using the minimum cost realizations in the entire past as the estimates of the lowest possible cost for each decision maker.
\end{itemize}

\noindent{}These contributions are the first formal guarantees of achieving team optimality in stochastic common interest games under full state observability but no action sharing.

%\begin{center} \textbf{Paper Organization} \end{center}

The remainder of the paper is organized as follows: Section \ref{sec:LitReview} surveys related literature. In Section \ref{sec:Model}, we specify the stochastic game model and provide relevant background. In Section \ref{sec:mainAlgo}, we present our main algorithm and state Theorem \ref{theorem1}. Section \ref{sec:weaklyAcyclic} presents Theorem \ref{weaklyTheorem}, which strengthens a result from \cite{AY2017}. Section \ref{sec:constasp} considers a variant of the main algorithm and presents Theorem \ref{th:constasp}, which studies the variant algorithm's long-term behaviour in general sum games. Section \ref{sec:simulation}  contains numerical results from a simulation study, and Section~\ref{sec:conclusion} concludes the paper. The proofs of our main technical results are contained in the appendices.

\subsection{Notation} \label{ss:notation} $\mathbb{R}$ denotes the real numbers, $\mathbb{N}$ and $\mathbb{N}_+$ denote the nonnegative and positive integers, respectively. $\P(\cdot)$ and $E(\cdot)$ denote the probability and the expectation, respectively. For a finite set $S$, $\mathcal{P}(S)$ denotes the set of probability distributions over $S$. For finite sets $S, S'$, we let $\mathcal{P} ( S' | S)$ denote the set of stochastic kernels on $S'$ given $S$. An element $\mathcal{T} \in \mathcal{P} ( S' | S)$ is a collection of probabilities distributions on $S'$, with one distribution for each $s \in S$, and we write $\mathcal{T} ( \cdot | s )$ for $s \in S$ to make this distributional dependence on $s$ explicit. We write $Y \sim f$ to denote that the random variable $Y$ has distribution $f$. If the distribution of $Y$ is a mixture of other distributions, say with mixture components $f_i$ and weights $p_i$ for $1 \leq i \leq n$, we write $Y \sim \sum_{i = 1}^n p_i f_i$. The Dirac distribution concentrated at $x \in \mathbb{R}$ is denoted $\mathbb{I}_x$. For a finite set $S$, $\uniform(S)$ denotes the uniform distribution over $S$ and $2^S$ denotes the set of subsets of $S$. $(x)^+:=\max\{x,0\}$, for $x\in\mathbb{R}$.

\section{Literature Review} \label{sec:LitReview}

Interest in using single-agent reinforcement learning in multi-agent environments dates back at least as far as \cite{tan1993multi}, in which Q-learning is studied in a cooperative predator-prey simulation. In \cite{sen}, multiple agents run Q-learning in a block-pushing task without sharing actions with one another, and the authors suggest that cooperative behaviour may emerge even without explicit communication between agents.

In addition to presenting empirical results and formal conjectures, an important terminological distinction was popularized in \cite{clausBoutilier}, where the authors distinguish between joint action learners and independent learners: joint action learners use the past actions of all agents in their learning, while independent learners use only local action histories.

Early rigorous results on MARL in games was concerned mostly with joint-action learners. In \cite{littman1994markov}, Littman proposed stochastic games as a framework for studying MARL and presented the \textit{Minimax Q-learning} algorithm, a joint-action learner designed for two-player zero-sum games. Convergence results for this method were proved in \cite{littman1996generalized}.  The main idea from \cite{littman1994markov} was extended in \cite{hu2003nash} and \cite{hu1998multiagent}, which present \textit{Nash Q-learning}, another joint-action learner with convergence guarantees under certain restrictive assumptions. Further contributions in this line include \textit{Friend-or-Foe Q-learning} \cite{littman2001friend}, \textit{Team Q-Learning} \cite{littman2001team}, and several others, e.g. \cite{greenwald2003correlated,tesauro2004hyper}. A considerably different approach is taken in \cite{wangSandholm}, which presents \textit{Optimal Adaptive Play} (OAP), a joint-action learner based on adaptive play \cite{young} rather than on Q-learning. OAP is shown to converge to a team optimal policy when used in a stochastic team.

Though early rigorous work focused on joint action learners, there has also been persistent interest in independent learners. As the number of joint actions is exponential in the number of agents, the computational burden of a joint action learner at any one agent becomes intractable for problems of even a moderate size. Scalability, robustness, and faster convergence are potential advantages of independent learners over joint action learners \cite{matignon2012Survey, stone2000survey}. The applicability of the set-up considered here and other advantages are covered in greater detail in \cite{matignon2012Survey} and \cite{leottau2018decentralized}. For a recent survey of MARL that discusses other decentralized set-ups, see \cite{zhang2019survey}.

\textit{Distributed Q-learning}, an independent learner designed for teams, was presented in \cite{lauer2000algorithm}, along with a guarantee of convergence to team optimality in teams with deterministic state dynamics and costs. When using this algorithm, an agent only updates its Q-factors when an improvement is observed, attributing unfavourable feedback to its teammates' experimenting with other actions. This optimistic approach leads to poor performance in problems with random state transitions or cost readings \cite{matignon2012Survey}.

An algorithm called \textit{Win or Learn Fast Policy Hill Climbing} (WoLF-PHC) was introduced in \cite{bowling2002multiagent}. An agent using WoLF-PHC selects actions according to an exploration policy and iteratively improves its exploration policy using its learned Q-factors by updating toward a best-response. Although no formal results are presented for stochastic games, the key innovation of \cite{bowling2002multiagent} is its policy update: the agent compares the performance of the current exploration policy to that of a distinguished ``average policy." When the current policy outperforms the average policy, the agent changes its policy relatively slowly; when the current policy is underperforming, the agent changes its policy more rapidly.

Following \cite{lauer2000algorithm} and \cite{bowling2002multiagent}, a number of algorithms based on Q-learning were proposed for stochastic games. Some of these algorithms, such as \textit{Hysteretic Q-learning} \cite{matignon2007hysteretic}, modify the Q-factor update. Other methods, including the \textit{Frequency Maximum Q} heuristic presented in \cite{kapetanakis2002reinforcement} and its extensions to stochastic games \cite{matignon2009coordination}, modify action selection. Still other methods, such as lenient learning \cite{panait2006lenient,wei2016lenient}, modify both the Q-factor update as well as the action selection mechanism in an attempt to achieve optimality in cooperative games. With the exception of \cite{lauer2000algorithm} described above, these works offer only empirical support for their algorithms, rather than formal guarantees of convergence to team optimality. For a survey on this line of research and a description of obstacles in MARL, see \cite{matignon2012Survey}. %{\color{red}We note that all of the preceding studies---both independent and joint action learners alike---studied models with full state observations at each agent.}%Just say this in the response letter instead.

While researchers in the machine learning community sought empirically successful algorithms for stochastic games, a parallel line of research in the control and operations research communities sought rigorous results in the more restricted class of stateless repeated games.

Among the literature on MARL for repeated games, \cite{chasparis2013aspiration,marden2009payoff} and \cite{marden2014achieving} are most relevant to this paper. Although the algorithms and analysis presented in these works differ from one another, each operates using the principle of exploring an agent's set of actions more aggressively when the agent perceives it is underperforming.

Reference \cite{marden2009payoff} presents three algorithms, including \textit{Safe Experimentation Dynamics}, which is shown to lead to team optimality in repeated teams with high probability. Using this method, an agent maintains a baseline action and baseline cost while experimenting with other actions. Each time an action is taken, its immediate cost is compared with the baseline cost; the baseline cost is adjusted when a lower cost is observed, and the action achieving this lower cost becomes the new baseline.

Agents using the algorithm from \cite{marden2014achieving} maintain a binary ``mood" variable, which is meant to capture whether the agent is content with its current performance. It is shown that all stochastically stable outcomes maximize the sum of joint payoffs across all agents.

\textit{Aspiration learning} for repeated coordination games is presented in \cite{chasparis2013aspiration}, along with formal results on the stochastic stability of efficient outcomes. An agent using this algorithm iteratively sets its aspiration level, a scalar threshold value that represents the highest cost (or lowest reward) that the agent finds acceptable. When receiving costs higher than its aspiration level, the agent is unsatisfied and explores alternative actions more aggressively.

Other work in this area includes \cite{marden2012revisiting,PRADELSKI2012882} and \cite{GEB:stochimitation:2014}. Variants of log-linear learning for repeated games were studied in \cite{marden2012revisiting, PRADELSKI2012882} and come with guarantees on the stochastic stability of efficient outcomes. The stochastic imitation dynamics introduced in \cite{GEB:stochimitation:2014} assign probability one to efficient outcomes in large class of repeated games.

One explanation for the greater number of rigorous results on independent learners in a repeated game setting is the lack of state dynamics. In repeated games, the same stage game is played in each period and there is no tradeoff between short- and long-term costs. As such, the scalar cost realizations can be used directly when setting aspiration levels (as in \cite{chasparis2013aspiration}) or baseline costs (as in \cite{marden2009payoff} and \cite{marden2014achieving}). In contrast, policy evaluation is inherently slow (due to delayed rewards), noisy, and algorithm dependent in games with random state dynamics, and this is only exacerbated by the presence of other learning agents. Consequently, extending the preceding methods is a significant challenge.

In this paper, we study stochastic teams and common interest games with full state information at each agent but no action sharing between agents. This set-up arises naturally in problems where the state can be sensed by a global sensor and broadcast to agents. In \cite{matignon2010designing}, a (physically) distributed array of micro-electro-valves producing controlled and directed micro-air-jets is used to steer the motion of a small object on a smart surface. The state of this system is the current and previous positions of the object which is sensed by an overhead camera and accessed by all control units each controlling a separate valve. Each control unit implements a standard Q-learning algorithm based on the global state and its own control observations (by ignoring the other control units) for reasons stated as follows: ``A fully centralized control architecture is not suitable due to processing complexity and the number of communication channels required''.
%Lighting coverage over a sensor network is considered in \cite{tham2005multi}, where it is assumed that the global state is broadcast by radio to each agent in the network.
%In other applications, this information structure arises due to computational considerations.
In \cite{leottau2018decentralized}, robotics problems involving multi-dimensional action spaces are considered. The authors observe that centralized approaches in problems with multiple actuators are often intractable due to a combinatorial explosion of the joint state-action space. Among other decentralization schemes, the authors consider the case with full state but only local actions, wherein the actuators are able to sense the global state variable (e.g. two dimensional position and velocity in a vehicle navigation problem; three dimensional position in a joint manipulation task)  but do not attempt to sense one another's actions for computational tractability.
Other applications for which this information structure is appropriate include problems where the state variable is a commonly observed price as well as problems in traffic networks, where link latencies can be broadcast using a mobile application. 

Another motivation for studying this set-up is that algorithms designed for problems with full state information but no action-sharing have been successful even when used in problems possessing a different information structure, such as partial state observability. Examples of studies that use partial state observations as a surrogate for complete state observations and then use methods designed for our information structure include interference control in wireless networks \cite{galindo2010distributed, nie2016q} and cache placement in wireless networks \cite{lin2019marl}. Many further examples can be found in the area of cognitive radio; see \cite{wang2016survey} and the references therein.

%Finishing sentence: this paper presents the first independent learner for stochastic teams (and their generalizations) that come with a rigorous guarantee of convergence to team optimality. Don't forget to cite Kaiqing Zhang's two papers.

In \cite{AY2017}, we introduced an independent learner that provably leads to equilibrium in weakly acyclic stochastic games in general and in teams in particular. However, stochastic teams generally have both team optimal equilibrium policies and suboptimal equilibrium policies, and suboptimal equilibria can perform arbitrarily worse than an optimal equilibrium. A simple but illustrative example is offered in Section \ref{sec:Model}. Thus, guarantees of finding an equilibrium joint policy are not satisfactory in the context of decentralized control when cost minimization is a design goal. In this paper, we modify the main algorithm from \cite{AY2017} to guarantee convergence to team optimality when possible. \\

\section{Background} \label{sec:Model}

\subsection{Stationary Markov Decision Problems and Q-learning}

A stationary Markov Decision Problem (MDP) with a discounted cost criterion is a discrete time process characterized by the following:
\begin{enumerate}
	\item A finite set of states $\mathbb{X}$
	\item A random initial state $x_0 \in \mathbb{X}$
	\item A finite set of control actions $\mathbb{U}$
	\item A discount factor $\beta \in (0,1)$
	\item A cost function $c: \mathbb{X} \times \mathbb{U} \to \mathbb{R}$
	\item A transition probability kernel $P\in \mathcal{P} ( \mathbb{X} | \mathbb{X} \times \mathbb{U} )$ for determining the next state given the current state-action.
\end{enumerate}

At time $t\in\mathbb{N}$, the system is in state $x_t \in \mathbb{X}$ and the decision maker\footnote{We use the terms agent, decision maker, and player interchangeably.} (DM) selects a control action $u_t \in \mathbb{U}$. The DM then incurs a stage cost $c(x_t, u_t)$, and the system randomly transitions to the next state, $x_{t+1}$, according to the probability distribution $P( \cdot \vert x_t, u_t)$. %The agent's objective is to minimize the expectation of their series of discounted costs:
%\[
%E [ \sum_{t = 0}^\infty \beta^t c ( x_t, u_t) ]
%\]
%Here, the subscript $\pi$ is used to show that $\pi$ is the policy being used. The policy used determines a probability measure on the sequence of states and control actions. ... \bmy{I have removed the talk of policies and introduced it afterward.}
We assume that, prior to selecting $u_t$ at time $t\in\mathbb{N}$, the DM has access to the information $I_t$ defined by
\[
I_0=\{x_0\}, \quad I_{t+1} = I_t \cup \{ x_{t+1}, u_t, c ( x_t, u_t ) \}, \ t\in\mathbb{N}.
\]

A policy is a rule for selecting control actions based on the information available. In principle, the DM may use any function of $I_t$ to choose $u_t$, possibly with randomization. Fixing a policy $\theta$ induces a probability distribution on the sequence of state-actions $\{ (x_t, u_t )\}_{t \in \mathbb{N}}$. This induced probability measure is used to define the cost criterion:

\[
J_x (\theta) := E^\theta \left( \sum_{t\in\mathbb{N}} \beta^t c ( x_t, u_t) \Big| x_0 = x   \right), \quad \forall x\in\mathbb{X}
\]
where $E^\theta$ denotes that the stochastic process $\{ (x_t, u_t ) \})_{t \in \mathbb{N}}$ is determined by the policy $\theta$.

The DM's goal is to select a policy that minimizes the cost functional $J_x$ in every initial state $x \in \mathbb{X}$. Although the agent can use an arbitrarily complicated, history dependent policy, it is well-known (see, for example, \cite{HernandezLermaMCP}) that this minimum can be achieved within the simpler set of stationary randomized policies, which we identify with the set $\Delta = \mathcal{P}( \mathbb{U} | \mathbb{X} )$. A stationary randomized policy $\theta \in \Delta$ uses only the most recent state $x_t$ to (randomly) select an action $u_t$ in a time-invariant manner; that is, when the agent follows a policy $\theta \in \Delta$, we have $u_t \sim \theta ( \cdot | x_t )$. Within $\Delta$, we can further restrict our attention (without loss of optimality \cite{HernandezLermaMCP}) to the set of stationary deterministic policies $\Pi$, which we identify as $\Pi = \{ \pi : \mathbb{X} \to \mathbb{U} \}$. An agent following a policy $\pi \in \Pi$ selects its action as a deterministic function of the state, and we write $u_t = \pi ( x_t)$ or $u_t \sim \mathbb{I}_{\pi(x_t)}$. 

When the cost function and transition kernel are known, iterative methods such as value iteration can be used to obtain an optimal policy. Otherwise, model-free reinforcement learning techniques such as Q-learning \cite{watkins} can be used to recover an optimal policy.
In standard Q-learning, the DM begins with arbitrary Q-factors $Q_0 \in \mathbb{R}^{\mathbb{X} \times \mathbb{U}}$ and updates its Q-factors as follows:
\begin{align*}
Q_{t+1} (x_t, u_t) = & (1 - \alpha_t (x_t, u_t)) Q_t ( x_t, u_t)  \\
                  &  + \alpha_t (x_t, u_t) (c(x_t, u_t) + \beta \min_{v \in \mathbb{U}} Q_t ( x_{t+1}, v ) )\\
Q_{t+1} ( x, u  ) = & Q_t ( x, u  ), \quad \forall  (x, u) \not= (x_t, u_t)
\end{align*}
where $\alpha_t ( x_t, u_t )\in[0,1]$ is the step-size at time $t\in\mathbb{N}$. If all state-action pairs are visited infinitely often and the step-sizes vanish properly, then $\P (Q_t \rightarrow Q^*)=1$, where $Q^*$ is the vector of optimal Q-factors, the unique solution of a Bellman fixed point equation \cite{watkinsDayan}, \cite{tsitsiklis}.

Once $Q^*$ is attained, one can recover the value function $V^*$, using $V^* (x) = \min_{u \in \mathbb{U}} Q^*(x,u)$, or an optimal policy $\pi^*$, using $\pi^*(x) \in\arg\min_{u \in \mathbb{U}} Q^*(x, u)$. Moreover, learned Q-factors can be exploited during play: \cite{singh2000convergence} presents a Q-learning algorithm in which the DM's action selection converges to that of an optimal policy.

The popularity of Q-learning in stationary MDPs is justified: it is easy to implement and asymptotically recovers an optimal policy. However, this theoretical guarantee is predicated on the stationarity of the system. When a state-action $(x_t, u_t)$ is visited, the feedback received (in the form of a cost $c(x_t, u_t)$ and next state $x_{t+1}$) is always generated by the same Markovian source. If the system is not stationary, then convergence to the Q-factors $Q^*$ is not guaranteed.

%%%%%%%%%%%%%%%%%%%%%%%%%%%%%%%%%%%%%%%%%%%%

\subsection{Stochastic Games and Decentralized Q-learning}

A finite (discounted) stochastic game is a multi-agent generalization of a stationary MDP, and is characterized by
\begin{enumerate}
\item $N \in\mathbb{N}_+$ decision makers, the $i^{th}$ denoted by DM$^i$
\item A finite set of states $\mathbb{X}$
\item A random initial state $x_0 \in \mathbb{X}$
\item For each DM$^i$:
\begin{itemize}
\item[] A finite set of control actions $\mathbb{U}^i$
\item[] A discount factor $\beta^i \in (0,1)$
\item[] A cost function $c^i: \mathbb{X} \times \mathbb{U} \to \mathbb{R}$, where $\mathbb{U} :=  \times_{i = 1}^N \mathbb{U}^i$
\end{itemize}
\item A transition probability kernel $P \in \mathcal{P} ( \mathbb{X} | \mathbb{X} \times \mathbb{U} )$ for determining the next state given the current state and joint action.
%\item A transition probability kernel $P(\cdot \vert x, \textbf{u})$ for determining the next state given the current state-joint action $(x,\textbf{u}) \in \mathbb{X} \times \mathbb{U}$.
\end{enumerate}

At time $t\in\mathbb{N}$, the system is in state $x_t$, and each DM$^i$ chooses a control action $u^i_t$. While DM$^i$ only selects $u^i_t$, its incurred cost is given by $c^i ( x_t, \textbf{u}_t)$, where $\textbf{u}_t:=(u^1_t, \dots, u^N_t)$. Following the play of this stage game, the system randomly transitions to state $x_{t+1}$ according to $P ( \cdot \vert x_t, \textbf{u}_t)$. We consider the situation in which DM$^i$ observes only the state variable, its own actions, and its own cost realizations (DM$^i$ need not know the functional form of its cost). More precisely, prior to selecting $u_t^i$ at time $t\in\mathbb{N}$, DM$^i$ has access to the information $I_t^i$ defined by
\[
I_0^i=\{x_0\}, \quad I_{t+1}^i = I_{t}^i \cup \{ x_{t+1}, u_{t}^i, c^i ( x_{t}, \textbf{u}_{t})\}, \ t\in\mathbb{N}.
\]
In particular, DM$^i$ cannot see the past actions of the other DMs, $u^j_s$, for any $j \not= i$, $s\in\mathbb{N}$. This is in contrast to previous works such as \cite{wangSandholm}, \cite{hu2003nash}, \cite{littman2001friend} and \cite{zhangEtAl}.

A policy for DM$^i$ is a rule for selecting the sequence of local actions given the information available to DM$^i$. As in MDPs, DM$^i$'s goal is to minimize its long-term expected discounted cost. Unlike MDPs, however, DM$^i$'s cost is affected by the control actions of the other agents. We again restrict our attention to stationary randomized policies, which will be justified below. We denote the set of stationary randomized policies for DM$^i$ by $\Delta^i := \mathcal{P} ( \mathbb{U}^i | \mathbb{X})$, and similarly we use $\Pi^i =\{ \pi^i : \mathbb{X} \to \mathbb{U}^i \}$ to denote the set of stationary deterministic policies for DM$^i$.

We use boldface symbols to denote joint objects, i.e. lists of objects with one entry per agent, and we omit the agent superscript. The set of stationary joint policies is thus denoted by $\bm{\Delta} := \times_{i = 1}^N \Delta^i$ and the set of stationary deterministic joint policies is denoted $\bm{\Pi} := \times_{i = 1}^N \Pi^i$.

For notational convenience, we will use the agent superscript $-i$ to refer to a joint quantity for which DM$^i$'s position has been removed. Using this standard convention, the set of stationary joint policies for all agents except DM$^i$ is denoted $\bm{\Delta}^{-i} := \times_{j \not= i } \Delta^j$. Similarly, $\bm{\Pi}^{-i} = \times_{j \not= i} \Pi^j$ and $\mathbb{U}^{-i} = \times_{j \not= i} \mathbb{U}^j$. By convention, we may write $\mathbb{U} = \mathbb{U}^i \times \mathbb{U}^{-i}$ for any DM$^i$, and similarly for the sets $\bm{\Delta}$ and $\bm{\Pi}$. This allows us to re-write joint objects while isolating DM$^i$'s role: for instance, a joint action $\textbf{u} \in \mathbb{U}$ can be re-written as $\textbf{u} = (u^i, \textbf{u}^{-i})$ and a joint policy $\bm{\theta} \in \bm{\Delta}$ can be re-written as $\bm{\theta} = (\theta^i, \bm{\theta}^{-i})$.

A joint policy $\bm{\theta} \in \Delta$ induces a probability measure on sequences of states and joint actions, which we use in defining DM$^i$'s cost
\[
J^i_x ( \bm{\theta} ) := E^{\bm{\theta}} \bigg ( \sum_{t\in\mathbb{N}} (\beta^i)^t c^i ( x_t, \textbf{u}_t) \Big| x_0 = x   \bigg), \quad \forall x\in\mathbb{X}
\]
where $E^{\bm{\theta}}$ denotes that the stochastic process $\{ (x_t, \textbf{u}_t ) \}_{t \in \mathbb{N}}$ is determined by the policy $\bm{\theta}$. Then, each DM$^i$'s goal is to select a policy $\pi^i\in\Delta^i$ to minimize this cost.

\begin{definition}
A policy $\pi^{*i}\in\Delta^i$ is called a best reply to $\bm{\theta}^{-i}\in \bm{\Delta}^{-i}$ (for DM$^i$) if
\[
J_x^i (  \pi^{*i} , \bm{\theta}^{-i} ) = \min_{\pi^i \in \Delta^i} J_x^i ( \pi^i, \bm{\theta}^{-i}) , \quad \forall  x \in \mathbb{X}.
\]
Any best reply $\pi^{*i}\in\Delta^i$ to $\bm{\theta}^{-i}\in \bm{\Delta}^{-i}$ is called a strict best reply with respect to $(\pi^i,\bm{\theta}^{-i})$ if
\[
J_x^i (  \pi^{*i} , \bm{\theta}^{-i} ) < J_x^i ( \pi^i, \bm{\theta}^{-i}) , \quad \textrm{for some} \  x \in \mathbb{X}.
\]

\end{definition}
For any fixed $\bm{\theta}^{-i} \in \bm{\Delta}^{-i}$, DM$^i$ faces a stationary MDP; hence, DM$^i$ always has a deterministic best reply to any $\bm{\theta}^{-i} \in \bm{\Delta}^{-i}$. We denote the set of deterministic best replies by
\[
\BR^i(\bm{\theta}^{-i}) := \{ \pi^{*i} \in \Pi^i : \pi^{*i} \ \textrm{is a best reply to} \ \bm{\theta}^{-i} \}.
\]

\noindent We can describe the set $\BR^i(\bm{\theta}^{-i})$ using the optimal Q-factors for the MDP faced by DM$^i$ when playing against the policy $\bm{\theta}^{-i}$. The vector of optimal Q-factors for this environment is denoted $Q^{*i}_{\bm{\theta}^{-i}} \in \mathbb{R}^{\mathbb{X} \times \mathbb{U}^i}$. We include the policy $\bm{\theta}^{-i}$ in this notation as a reminder that the MDP and optimal Q-factors both depend on the policy used by all other players. Then, $\BR^i (\bm{\theta}^{-i})$ can be expressed as

\begin{align*}
%\BR^i (\bm{\theta}^{-i}) = \{ \pi^i \in \Pi^i : &Q^{*i}_{\bm{\theta}^{-i}} ( x, \pi^i(x)) \\
%						&= \min_{v^i \in \mathbb{U}^i} Q^{*i}_{\bm{\theta}^{-i}} ( x, v^i ), \forall x \in \mathbb{X} 
&\BR^i (\bm{\theta}^{-i}) \\
&= \{ \pi^i \in \Pi^i : Q^{*i}_{\bm{\theta}^{-i}} ( x, \pi^i(x)) = \min_{v^i \in \mathbb{U}^i} Q^{*i}_{\bm{\theta}^{-i}} ( x, v^i ), \forall x \in \mathbb{X} \}.
\end{align*}

\begin{definition}
A joint policy $\bm{\theta}^* \in \bm{\Delta} $ is called a (Markov perfect) equilibrium if $\theta^{*i}$ is a best reply to $\bm{\theta}^{*-i}$, for all $i$.
\end{definition}

We denote the set of all Markov perfect equilibrium policies by $\bm{\Delta}_{\rm eq}$ and we denote the set of stationary deterministic equilibrium policies by $\bm{\Pi}_{\rm eq} := \bm{\Delta}_{\rm eq} \cap \bm{\Pi}$. In any finite discounted stochastic game, the set $\bm{\Delta}_{\rm eq}$ is non-empty \cite{fudenberg1991game}. Note, however, that the set $\bm{\Pi}_{\rm eq}$ may be empty in general stochastic games.

\begin{definition}
A stochastic game is called a stochastic team (or simply a team) if there exists $c: \mathbb{X} \times \mathbb{U} \to \mathbb{R}$ and $\beta \in (0,1)$ such that
$$c^i = c, \ \beta^i=\beta, \quad \forall {\rm DM}^i .$$
%where $c$ and $\beta$ denote the common cost function and the discount factor of the team, respectively.
\end{definition}

\begin{definition}
A joint policy $\bm{\pi}^* \in \bm{\Pi}$ is called team-optimal if
\begin{equation}
\label{eq:teamopt}
J_x^i ( \bm{\pi}^* ) = \inf_{\bm{\pi} \in \bm{\Pi}} J_x^i ( \bm{\pi} ) \quad \forall i, x \in \mathbb{X}.
\end{equation}
\end{definition}

%\sout{We associate a team with a (centralized) stationary MDP with the joint action set $\mathbb{U}$. Any joint policy in $\Delta$ would be admissible for this MDP and thus the minimum cost for this MDP would be a lower bound on the cost of a team-optimal policy. Note that this centralized MDP would have a stationary deterministic optimal policy $\pi^*:\mathbb{X} \to \mathbb{U}$. Since $\pi^*\in\Pi$, $\pi^*$ would also be team-optimal. Therefore, in a team, the set of deterministic team-optimal policies $\Pi_{\rm opt} := \{ \pi \in \Pi : \pi \ \textrm{ is team-optimal}\}$ is always non-empty.}\footnote{Can we simply drop this whole section and replace it with the blue edit?}
We use $\bm{\Pi}_{\rm opt}$ to denote the set of team optimal policies, which are stationary deterministic policies by definition. It is easy to see that $\bm{\Pi}_{\rm opt}$ may be empty in a general stochastic game but that $\bm{\Pi}_{\rm opt}$ is non-empty in any stochastic team.

%\sout{The notion of team-optimality can be extended to a larger class of stochastic games than teams.}

\begin{definition}
\label{def:comint}
A stochastic game is called a common interest game if (i) $\bm{\Pi}_{\rm opt}$ is non-empty, and (ii) for any $\tilde{\bm{\pi}} \in \bm{\Pi} \setminus \bm{\Pi}_{\rm opt}$, we have
$$ \inf_{ \bm{\pi}  \in \bm{\Pi}}\sum_{x\in X}  J_x^i ( \bm{\pi}) < \sum_{x\in X} J_x^i ( \tilde{\bm{\pi}} ), \quad \forall {\rm DM}^i.$$

\end{definition}

This definition is consistent with the definition of a common interest game introduced in \cite{AUMANN19895} and used in other literature, e.g., \cite{TAKAHASHI2005231}.
Teams are a proper subclass of common interest games. The repeated game ($|\mathbb{X}|=1$) with the stage cost functions shown in Figure~\ref{fig1} is a common interest game for $a$, $b>0$ but not a team unless $a=b$ and $\beta^1=\beta^2$.

\begin{figure}[h]
\footnotesize
\centering
\begin{game}{3}{2}[$u_t^1$:][$u_t^2$:]
              & 1 & 2 \\
1     &$a,b$            & $a+1,b+1$ \\
2     &$a+1,b+1$            & $-a,-b$
\end{game}
\caption[]{Stage cost for a two-DM game where DM$^1$ (DM$^2$) chooses a row (a column) and its cost is the first (the second) entry in the chosen cell.}
\label{fig1}
\end{figure}

It is immediate that a team-optimal policy is an equilibrium; however, the converse need not be true. For an illustration of how poorly an equilibrium policy can perform with respect to team-optimality, consider again the repeated game presented in Figure~\ref{fig1} with $a=b>0$ and $\beta^1=\beta^2=\beta \in (0,1)$. Clearly, the joint policy $\bm{\pi}_{\rm sub}:=(1,1)$ is an equilibrium policy, and so is the team-optimal policy $\bm{\pi}^*:=(2,2)$. We have that $J^i( \bm{\pi}_{\rm sub} ) - J^i ( \bm{\pi}^*) =  \frac{2a}{1 - \beta}$, for each agent $i\in\{1,2\}$, which shows that the performance gap between an equilibrium policy and a team-optimal policy can be arbitrarily large. This provides the motivation for designing decentralized algorithms that allow agents to learn team-optimal policies, when they exist.

Our objective is the following: given a common interest game, we wish to provide each DM with a decentralized learning algorithm that does not use control sharing and that provably leads, in some appropriate sense, to a team optimal policy.

In \cite{AY2017}, we presented an algorithm that leads to equilibrium policies in weakly acyclic games, another class of games (different from common interest games) that generalizes teams. These algorithms instruct DMs to use the same stationary policy, called baseline policies, for large number of consecutive stages, the collection of which is called an exploration phase. At the end of an exploration phase, DMs update their baseline policies in a synchronized manner. In this way, the system is stationary for long enough for Q-learning to return meaningful Q-factors. The Q-factors acquired during an exploration phase are used to construct best replies; Q-factors are then reset for the next exploration phase. The DMs use inertial best-responding to update their baseline policies, and it is shown that this process leads to equilibrium policies in weakly acyclic games.

In the next section, we present a decentralized learning algorithm that leads to team-optimal policies, when they exist. The algorithm here uses the exploration phase technique from \cite{AY2017}, but modifies the baseline policy update in order to exploit the following structural result on Q-factors in teams and common interest games.

\begin{lemma} \label{QfactorsFact}
In a common interest game, for all $i$, $\bm{\pi}^* \in \bm{\Pi}_{\rm opt}$, $\tilde{\bm{\pi}} \in \bm{\Pi} \setminus \bm{\Pi}_{\rm opt}$, we have
\[
\sum_{x \in \mathbb{X}} Q^i_{\bm{\pi}^{*-i}} (x, \pi^{*i}(x) ) < \sum_{ x \in \mathbb{X}} Q^i_{\tilde{\bm{\pi}}^{-i}} (x ,\tilde{\pi}^i (x)).
\]
\end{lemma}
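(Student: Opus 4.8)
The plan is to reduce both sides of the claimed inequality to the single scalar $\inf_{\bm{\pi}\in\bm{\Pi}}\sum_{x\in\mathbb{X}}J^i_x(\bm{\pi})$ and then invoke part (ii) of the definition of a common interest game (Definition~\ref{def:comint}). Throughout I would use that $Q^i_{\bm{\theta}^{-i}}$ is the (optimal) Q-factor vector of the stationary MDP that DM$^i$ faces when the other agents are frozen at $\bm{\theta}^{-i}$, so that $\min_{v^i\in\mathbb{U}^i}Q^i_{\bm{\theta}^{-i}}(x,v^i)$ is the optimal value of that MDP at state $x$, equals $\min_{\pi^i\in\Pi^i}J^i_x(\pi^i,\bm{\theta}^{-i})$, and is attained \emph{simultaneously over all $x$} by any $\hat{\pi}^i\in\BR^i(\bm{\theta}^{-i})$ (since the MDP has a stationary deterministic optimal policy). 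I would also use the best-reply characterization already recorded in the excerpt: $\pi^i\in\BR^i(\bm{\theta}^{-i})$ iff $Q^i_{\bm{\theta}^{-i}}(x,\pi^i(x))=\min_{v^i}Q^i_{\bm{\theta}^{-i}}(x,v^i)$ for all $x$.

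For the left-hand side: a team-optimal policy is an equilibrium, so $\pi^{*i}\in\BR^i(\bm{\pi}^{*-i})$, whence $Q^i_{\bm{\pi}^{*-i}}(x,\pi^{*i}(x))=\min_{v^i}Q^i_{\bm{\pi}^{*-i}}(x,v^i)=J^i_x(\bm{\pi}^*)$ for every $x$. Team-optimality gives $J^i_x(\bm{\pi}^*)=\inf_{\bm{\pi}}J^i_x(\bm{\pi})$ pointwise in $x$, and since the single policy $\bm{\pi}^*$ attains this infimum at every state, one gets the interchange $\sum_x\inf_{\bm{\pi}}J^i_x(\bm{\pi})=\inf_{\bm{\pi}}\sum_x J^i_x(\bm{\pi})$; thus the left-hand side equals $\inf_{\bm{\pi}\in\bm{\Pi}}\sum_x J^i_x(\bm{\pi})$.

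For the right-hand side I would split on whether DM$^i$ is best-replying within $\tilde{\bm{\pi}}$. Set $\delta:=\sum_x\big(Q^i_{\tilde{\bm{\pi}}^{-i}}(x,\tilde{\pi}^i(x))-\min_{v^i}Q^i_{\tilde{\bm{\pi}}^{-i}}(x,v^i)\big)\ge 0$, and note that $\sum_x\min_{v^i}Q^i_{\tilde{\bm{\pi}}^{-i}}(x,v^i)=\sum_x J^i_x(\hat{\pi}^i,\tilde{\bm{\pi}}^{-i})\ge\inf_{\bm{\pi}}\sum_x J^i_x(\bm{\pi})$ for any $\hat{\pi}^i\in\BR^i(\tilde{\bm{\pi}}^{-i})$. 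If $\tilde{\pi}^i\notin\BR^i(\tilde{\bm{\pi}}^{-i})$, then by the best-reply characterization some state contributes a strictly positive term, so $\delta>0$ and the right-hand side strictly exceeds $\inf_{\bm{\pi}}\sum_x J^i_x(\bm{\pi})$. If instead $\tilde{\pi}^i\in\BR^i(\tilde{\bm{\pi}}^{-i})$, then $\delta=0$ and $Q^i_{\tilde{\bm{\pi}}^{-i}}(x,\tilde{\pi}^i(x))=J^i_x(\tilde{\bm{\pi}})$ for all $x$, so the right-hand side is $\sum_x J^i_x(\tilde{\bm{\pi}})$; since $\tilde{\bm{\pi}}\notin\bm{\Pi}_{\rm opt}$, part (ii) of Definition~\ref{def:comint} gives $\sum_x J^i_x(\tilde{\bm{\pi}})>\inf_{\bm{\pi}}\sum_x J^i_x(\bm{\pi})$. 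In both cases the right-hand side exceeds the left-hand side, which is the claim.

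I expect the main subtlety to lie in the right-hand side: $\sum_x Q^i_{\tilde{\bm{\pi}}^{-i}}(x,\tilde{\pi}^i(x))$ is in general \emph{not} the joint-policy cost $\sum_x J^i_x(\tilde{\bm{\pi}})$ appearing in Definition~\ref{def:comint} (it is no larger, since after the first step one continues optimally rather than according to $\tilde{\pi}^i$), so part (ii) cannot be applied to $\tilde{\bm{\pi}}$ directly; one must first pass to the best-reply value $\sum_x\min_{v^i}Q^i_{\tilde{\bm{\pi}}^{-i}}(x,v^i)$ as a lower bound and treat the boundary case $\tilde{\pi}^i\in\BR^i(\tilde{\bm{\pi}}^{-i})$ — where the slack $\delta$ vanishes — on its own using the common-interest assumption. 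A secondary point that should be stated carefully is the $\min$--$\sum$ interchange on the left-hand side, which is valid here precisely because team-optimality supplies a single policy attaining all per-state infima at once.
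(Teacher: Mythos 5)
Your proposal is correct and follows essentially the same route as the paper's proof: identify the left-hand side with $\sum_x J^i_x(\bm{\pi}^*)$ via the fact that a team-optimal policy is an equilibrium, then split on whether $\tilde{\pi}^i\in\BR^i(\tilde{\bm{\pi}}^{-i})$, using Definition~\ref{def:comint}(ii) in the best-reply case and the strict per-state slack of the Q-factors in the other. The extra care you take with the $\min$--$\sum$ interchange and the explicit slack $\delta$ is sound but does not change the argument.
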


This fact provides for us an avenue for separating team-optimal policies from the other policies by focusing on Q-factors.

\begin{proof}

For all $i$, $\bm{\pi}^* \in \bm{\Pi}_{\rm opt}$,  $\tilde{\bm{\pi}} \in \bm{\Pi} \setminus \bm{\Pi}_{\rm opt}$, we have
\begin{align*}
\sum_{x \in \mathbb{X}} Q^i_{\bm{\pi}^{*-i}} ( x, \pi^{*i} ( x )) = & \sum_{x \in \mathbb{X}} J^i_x ( \bm{\pi}^* ) < \sum_{x \in \mathbb{X}} J^i_x (  \tilde{\bm{\pi}} )
\end{align*}
If $\tilde{\pi}^i\in \BR^i(\tilde{\bm{\pi}}^{-i})$, then $J_x^i ( \tilde{\bm{\pi}}) = Q^i_{\tilde{\bm{\pi}}^{-i}} ( x , \tilde{\pi}^i ( x ) )$; otherwise,
$$
\sum_{x \in \mathbb{X}} J^i_x ( \bm{\pi}^* ) \leq   \sum_{x \in \mathbb{X}} \min_{u^i\in\mathbb{U}^i} Q^i_{\tilde{\bm{\pi}}^{-i}} ( x , u^i)
                                         <  \sum_{x \in \mathbb{X}} Q^i_{\tilde{\bm{\pi}}^{-i}} ( x, \tilde{\pi}^i (x )) .
$$
\end{proof}

%%%%%%%%%%%%%%%%%%%%%%%%%%%%%%%%%%%%%%%%%%%%%%%%%%%%%%%%%%%%%%%%%%%%%%%%%%%%%%%%%%%%%%%%%%%%%%%%%%%%%%%%%%%%%%%%%%%%%%%%%%%%%%%%%%%%%%%%%%%%%%%%%%%%%%%%%%%%%%%%%%%%%%%%%%%%%%%%%%%%%%%%%%%%%%%%%%%%%%%%%%%%%%%%%%%%%%%%%%%%%%%%%%%%%%%%%%%%%%%%%%%%%%%%%%%%%%%%%%%%%%%%%%%%%%%%%%%%%%%%%%%%%%%%%%%%%%%%%%%%%%%%%%%%%%%%%%%%%%%%%%%%%%%%%%%%%%%%%%%%%%%%%%%%%%%%%%%%%%%%%%%%%%%%%%%%%%%%%%%%%%%%%%%%%%%%%%%%%%%%%%%%%%%%%%%%%%%%%%%%%%%%%%%%%%%%%%%%%%%%%%%%%%%%%%%%%%%%%%%%%%%%%%%%%%%%%%%%%%%%%%%%%%%%%%%%%%%%%%%%%%%%%

\section{Learning Team Optimality } \label{sec:mainAlgo}

In this section, we introduce a learning algorithm for achieving team optimality in teams and common interest games. To motivate our algorithm, we first study a time-homogenous Markov chain $\{ \bm{\pi}_k \}_{k \geq 0}$, taking values in the set of joint stationary deterministic policies $\bm{\Pi}$. The dynamics of this Markov chain will be determined by the Idealized Update Procedure (IUP), detailed in Algorithm~\ref{al:IUP}. While the IUP cannot be implemented in a stochastic common interest game under the information structure of interest, the resulting Markov chain will be used in approximation arguments in the proofs of our main results.

%Revised version, Feb 2021:
Under \textit{inertial best-responding} with inertia parameter $\lambda^i \in (0,1)$, at time $k \in \mathbb{N}$, DM$^i$ checks whether its current policy $\pi^i_k$ is a best-reply to the policy being used by other players, i.e. it checks if $\pi^i_k \in \BR^i (\bm{\pi}^{-i}_k)$; if it is, then $\pi^i_{k+1} = \pi^i_k$. Otherwise DM$^i$ is not best-replying and selects $$\pi^i_{k+1} \sim (1-\lambda^i) \uniform ( \BR^i ( \bm{\pi}^{-i}_k )) + \lambda^i \mathbb{I}_{\pi^i_k},$$

that is, switches to a random best-reply with probability $1 - \lambda^i$ or is inert (does not change away from $\pi^i_k$) with probability $\lambda^i$. Including inertia in one's policy update can be used to avoid cycling in best-reply dynamics. For example, in the game in Figure \ref{fig1}, if play starts at either joint policy $(1,2)$ or at $(2,1)$ and both players switch to a best-reply at each step, the joint policy will cycle between $(1,2)$ and $(2,1)$ perpetually. Such cycling can be avoided by using explicit coordination mechanisms for determining which DM should change its policy and at what time, but such mechanisms may not be feasible in decentralized settings. Simple decentralized mechanisms such as inertia can been used with the same effect \cite{marden2012revisiting,marden2009payoff}.

The condition-dependent nature of inertial best-responding can be captured using a stochastic kernel $R^{i, \lambda^i} \in \mathcal{P} ( \Pi^i | \Pi^i \times 2^{\Pi^i} )$, where $R^{i, \lambda^i}$ selects a successor policy randomly, conditioning on the current policy and the current (perhaps estimated) best-reply set. To allow for uncertainty of $\BR^i ( \bm{\pi}^{-i}_k)$, we define $R^{i, \lambda^i}$ as follows:

\begin{equation}
\label{eq:Ri}
R^{i,\lambda^i}( \tilde{\pi}^i | \pi^i,B^i) :=
\left\{\begin{array}{cl}
1, &  \textrm{if} \ \pi^i\in B^i, \ \tilde{\pi}^i=\pi^i \\
\lambda^i, &  \textrm{if} \ \pi^i\not\in B^i, \ \tilde{\pi}^i=\pi^i \\
\frac{1-\lambda^i}{|B^i|}, &  \textrm{if} \ \pi^i\not\in B^i, \ \tilde{\pi}^i\in B^i \\
0, & \textrm{otherwise}
\end{array}\right. ,
\end{equation}
for any $\pi^i \in \Pi^i, B^i \in 2^{\Pi^i}$ and $\tilde{\pi}^i \in \Pi^i$.

Note that selecting $\pi^i_{k+1} \sim R^{i,\lambda^i}(\cdot | \pi^i,\BR^i(\bm{\pi}_k^{-i}))$ is equivalent to selecting $\pi^i_{k+1}$ according to inertial best-responding with parameter $\lambda^i$.

Under the \textit{Idealized Update Procedure} (IUP), presented in Algorithm \ref{al:IUP}, DM$^i$ chooses $\pi^i_{k+1}$ according to a mixture of uniform random experimentation and inertial best-responding when the joint policy is team optimal, i.e. $\bm{\pi}_k \in \bm{\Pi}_{\rm opt}$. %In contrast, DM$^i$ has more flexibility in how it updates its policy when $\bm{\pi}_k \notin \bm{\Pi}_{\rm opt}$; in this case, DM$^i$ uses a mixture of uniform random experimenting and a player selected stochastic kernel $h^i  \in \mathcal{P} (\Pi^i | \Pi^i \times 2^{\Pi^i} )$ to choose $\pi^i_{k+1}$. (In particular, the guarantee of Lemma \ref{IdealizedProcessLemma}, on attaining team-optimality in common interest games, hold for with arbitrary $\{h^i\}_{i=1}^N$.)
When $\bm{\pi}_k \notin \bm{\Pi}_{\rm opt}$, DM$^i$ uses a mixture of uniform random experimenting and a player selected stochastic kernel $h^i  \in \mathcal{P} (\Pi^i | \Pi^i \times 2^{\Pi^i} )$ to choose $\pi^i_{k+1}$.

\begin{algorithm} \label{al:IUP}
\caption{Idealized Update Procedure (IUP) for DM$^i$}

\SetAlgoLined
\DontPrintSemicolon
\SetKw{Receive}{Receive}
\SetKw{parameters}{Set Parameters}
\SetKw{initialize}{Initialize}
%\SetKwFor{For}{for}{}{endfor}
\SetKwBlock{For}{for}{end} 

\parameters \;
$\lambda^i\in [0,1]$: inertia probability \;
$h^i \in \mathcal{P} ( \Pi^i | \Pi^i \times 2^{\Pi^i} )$, a policy update kernel  \; 
$\gamma^i , \kappa^i \in (0,1)$: exploration probabilities \; 
\BlankLine

\For($k \geq 0$ ){ %i  had to put { and } around ) because otherwise it misunderstands it as ending the for-loops condition. 

	\lIf(){ $\bm{\pi}_k \in \bm{\Pi}_{\rm opt}$ }{ $\pi^i_{k+1} \sim (1 - \gamma^i) R^{i, \lambda^i}( \cdot | \pi_k^i,\BR^i(\bm{\pi}_k^{-i})) + \gamma^i \uniform (\Pi^i)$ }
	\Else( {($\bm{\pi}_k \notin \bm{\Pi}_{\rm opt} $) }  ){
	$\pi^i_{k+1} \sim (1 - \kappa^i ) h^i ( \cdot | \pi^i_k , \BR^i ( \bm{\pi}^{-i}_k) + \kappa^i \uniform ( \Pi^i ) $
	}
	
}

\end{algorithm}

We will require that DM$^i$ randomly explores $\Pi^i$ more when the joint policy is not team optimal, i.e. $\kappa^i \gg \gamma^i$. Qualitatively, this results in shifting away from suboptimal joint policies more quickly than team optimal policies, and as a result the process spends a large fraction of time in $\bm{\Pi}_{\rm opt}$.  We formalize this intuition below, and note that the guarantee of Lemma \ref{IdealizedProcessLemma}, on attaining team-optimality in common interest games, holds for arbitrary $\{h^i\}_{i=1}^N$. That is, DM$^i$ has some flexibility in how it updates its policies when not experimenting and when the current joint policy is not team optimal.

\begin{lemma} \label{IdealizedProcessLemma}
Consider a common interest game, and suppose each DM$^i$ updates its policies according to the IUP in Algorithm \ref{al:IUP}. Let $A_{\bm{\gamma},\bm{\kappa}, \textbf{h}}$ denote the matrix of the transition probabilities for the induced time-homogenous Markov chain on $\bm{\Pi}$, where $\bm{\gamma}:=\{\gamma^i\}_{i=1}^N$, $\bm{\kappa}:=\{\kappa^i\}_{i=1}^N$, $\textbf{h} = \{ h^i \}_{i = 1}^N$.
We denote the associated unique stationary distribution by $\mu^*_{\bm{\gamma}, \bm{\kappa} , \textbf{h}}$. For any $\epsilon\in(0,1)$, $\bm{\kappa}\in(0,1)^N$, there exists $\bar{\gamma}_{\epsilon}(\bm{\kappa}) > 0$ such that if $\gamma^i \in (0 , \bar{\gamma}_{\epsilon}(\bm{\kappa}) )$ for all $i$, then
\begin{equation}
\label{eq:mu*}
\mu^*_{\bm{\gamma}, \bm{\kappa}, \textbf{h}} ( \bm{\Pi}_{\rm opt} ) \geq 1 - \epsilon/2.
\end{equation}
Moreover, for all $\mu_0\in\mathcal{P}(\bm{\Pi})$, we have $$\lim_{n \to \infty} \mu_0 A_{\bm{\gamma,\kappa}, \textbf{h}}^n = \mu^*_{\bm{\gamma,\kappa, h}}.$$
\end{lemma}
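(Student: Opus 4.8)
The plan is to show that the Markov chain on the finite state space $\bm{\Pi}$ induced by the IUP is irreducible and aperiodic, so that the Perron--Frobenius theory for finite stochastic matrices applies and gives both the existence of a unique stationary distribution $\mu^*_{\bm{\gamma},\bm{\kappa},\textbf{h}}$ and the convergence $\mu_0 A^n_{\bm{\gamma},\bm{\kappa},\textbf{h}} \to \mu^*_{\bm{\gamma},\bm{\kappa},\textbf{h}}$ from every initial distribution $\mu_0 \in \mathcal{P}(\bm{\Pi})$; the inequality \eqref{eq:mu*} is the quantitative part and is proved separately. Concretely, I would first establish irreducibility: for any two joint policies $\bm{\pi}, \bm{\pi}' \in \bm{\Pi}$, the probability of going from $\bm{\pi}$ to $\bm{\pi}'$ in a single step is bounded below by a positive constant. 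This is because, regardless of whether $\bm{\pi}_k \in \bm{\Pi}_{\rm opt}$ or not, each DM$^i$ mixes in uniform experimentation over $\Pi^i$ with positive weight ($\gamma^i$ or $\kappa^i$, both in $(0,1)$), so every DM$^i$ independently has probability at least $\min\{\gamma^i,\kappa^i\}/|\Pi^i| > 0$ of jumping to any prescribed $\pi'^i$; multiplying over the $N$ agents gives a strictly positive one-step transition probability between any ordered pair of joint policies. Hence the chain is not only irreducible but in fact every entry of $A_{\bm{\gamma},\bm{\kappa},\textbf{h}}$ is strictly positive.

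Since every entry of the transition matrix is strictly positive, in particular all diagonal entries $A_{\bm{\gamma},\bm{\kappa},\textbf{h}}(\bm{\pi},\bm{\pi}) > 0$, so the chain is aperiodic (each state has a self-loop). An irreducible aperiodic Markov chain on a finite state space is ergodic: it admits a unique stationary distribution $\mu^*_{\bm{\gamma},\bm{\kappa},\textbf{h}}$, and $\lim_{n\to\infty}\mu_0 A^n_{\bm{\gamma},\bm{\kappa},\textbf{h}} = \mu^*_{\bm{\gamma},\bm{\kappa},\textbf{h}}$ for every $\mu_0$. (Equivalently, since $A_{\bm{\gamma},\bm{\kappa},\textbf{h}}$ is a strictly positive stochastic matrix, the Perron--Frobenius theorem gives a simple dominant eigenvalue $1$ with a strictly positive left eigenvector, and all other eigenvalues have modulus strictly less than $1$, which yields the geometric convergence.) This disposes of the ``moreover'' clause and of the uniqueness assertion.

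For the quantitative bound \eqref{eq:mu*}, I would argue by a perturbation / continuity argument in the parameter $\bm{\gamma}$, holding $\bm{\kappa}$ and $\textbf{h}$ fixed. Consider the limiting regime $\bm{\gamma} \to 0$: in that limit the chain, when at a team-optimal policy, uses pure inertial best-responding $R^{i,\lambda^i}$, which fixes team-optimal policies (a team-optimal policy is an equilibrium, so $\pi^{*i}_k \in \BR^i(\bm{\pi}^{*-i}_k)$ for every $i$, and $R^{i,\lambda^i}$ puts all mass on $\pi^{*i}_k$). Thus at $\bm{\gamma} = 0$ the set $\bm{\Pi}_{\rm opt}$ is absorbing, while from any suboptimal policy the $\kappa^i$-experimentation keeps a uniformly positive chance of reaching $\bm{\Pi}_{\rm opt}$ (using Lemma~\ref{QfactorsFact}/the common-interest structure to ensure $\bm{\Pi}_{\rm opt}$ is reachable and, once reached, not left). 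Hence every stationary distribution of the $\bm{\gamma}=0$ chain is supported on $\bm{\Pi}_{\rm opt}$, i.e. assigns mass $1$ to it. The map $\bm{\gamma} \mapsto \mu^*_{\bm{\gamma},\bm{\kappa},\textbf{h}}$ is continuous on a neighbourhood of $\bm{\gamma}=0$ — this follows because the stationary distribution of a finite ergodic chain depends continuously (indeed analytically, away from degeneracies) on the transition probabilities, and the entries of $A_{\bm{\gamma},\bm{\kappa},\textbf{h}}$ are continuous (affine) in $\bm{\gamma}$ — so $\mu^*_{\bm{\gamma},\bm{\kappa},\textbf{h}}(\bm{\Pi}_{\rm opt}) \to 1$ as $\bm{\gamma}\to 0$, which gives the threshold $\bar{\gamma}_\epsilon(\bm{\kappa})$ such that the value is at least $1 - \epsilon/2$ whenever each $\gamma^i < \bar{\gamma}_\epsilon(\bm{\kappa})$.

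The main obstacle is the continuity-at-$\bm{\gamma}=0$ step, because at $\bm{\gamma}=0$ the chain is \emph{not} irreducible (it is a unichain with a non-trivial transient class $\bm{\Pi}\setminus\bm{\Pi}_{\rm opt}$ and recurrent class inside $\bm{\Pi}_{\rm opt}$), so one cannot simply invoke continuity of the stationary distribution of an ergodic chain; one must verify that the perturbed stationary distributions converge to the $\bm{\gamma}=0$ invariant distribution concentrated on the recurrent class rather than to some other invariant distribution. I would handle this either (a) by a direct hitting-time estimate: bound the expected return time to $\bm{\Pi}_{\rm opt}$ uniformly in $\bm{\gamma}$ small and bound the expected sojourn outside $\bm{\Pi}_{\rm opt}$ per excursion by something $O(\bm{\gamma})$, then use the renewal-reward characterization $\mu^*(\bm{\Pi}_{\rm opt}) = \mathbb{E}[\text{time in }\bm{\Pi}_{\rm opt}\text{ per cycle}] / \mathbb{E}[\text{cycle length}]$; or (b) by noting the $\bm{\gamma}=0$ chain has a \emph{unique} invariant distribution (since $\bm{\Pi}_{\rm opt}$ is reached from everywhere and, restricted to the recurrent behaviour induced by the $\kappa^i$-experimentation inside $\bm{\Pi}_{\rm opt}$, the chain is irreducible on a subset of $\bm{\Pi}_{\rm opt}$), in which case standard results on continuity of stationary distributions of unichains (e.g. via the fundamental matrix) apply. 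Either route closes the gap; the renewal-reward estimate is the cleaner one to write out.
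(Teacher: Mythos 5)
Your argument for uniqueness of the stationary distribution and for the convergence $\mu_0 A^n\to\mu^*$ (strict positivity of every entry of $A_{\bm{\gamma},\bm{\kappa},\textbf{h}}$ via the uniform experimentation, hence irreducibility and aperiodicity) is exactly what the paper does. For the quantitative bound \eqref{eq:mu*}, however, you take a genuinely different route. The paper does not pass to the limit $\bm{\gamma}\to 0$ at all: it writes the one-step balance identity $\mu^*(\bm{\Pi}_{\rm opt})=\sum_{\bm{\pi}^*\in\bm{\Pi}_{\rm opt}}\sum_{\bm{\pi}}\mu^*(\bm{\pi})A(\bm{\pi},\bm{\pi}^*)$, lower-bounds the contribution from $\bm{\pi}\in\bm{\Pi}_{\rm opt}$ by $\prod_i(1-\gamma^i)$ (a team-optimal policy is an equilibrium, so inertial best-responding fixes it) and the contribution from $\bm{\pi}\notin\bm{\Pi}_{\rm opt}$ by $\prod_i(\kappa^i/|\Pi^i|)$, and solves the resulting scalar inequality to get the explicit bound $\mu^*(\bm{\Pi}_{\rm opt})\geq 1-\sum_i\gamma^i/\bigl(\sum_i\gamma^i+\prod_i(\kappa^i/|\Pi^i|)\bigr)$. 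This is essentially a one-line, fully quantitative version of your renewal--reward idea, and the explicit dependence on $\bm{\gamma},\bm{\kappa}$ is what lets $\bar{\gamma}_\epsilon(\bm{\kappa})$ be exhibited directly; your soft perturbation argument only yields existence of the threshold.

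One concrete caution about your route (b): at $\bm{\gamma}=0$ the chain is \emph{not} a unichain in general, and your parenthetical justification is wrong. There is no $\kappa^i$-experimentation inside $\bm{\Pi}_{\rm opt}$ (the $\kappa$-branch of the IUP is only invoked when $\bm{\pi}_k\notin\bm{\Pi}_{\rm opt}$), so at $\bm{\gamma}=0$ \emph{every individual} team-optimal policy is an absorbing state; when $|\bm{\Pi}_{\rm opt}|>1$ the invariant distribution of the limit chain is far from unique, and unichain continuity results do not apply. This does not sink the approach: since every invariant distribution of the $\bm{\gamma}=0$ kernel is supported on $\bm{\Pi}_{\rm opt}$, and any limit point of $\mu^*_{\bm{\gamma},\bm{\kappa},\textbf{h}}$ as $\bm{\gamma}\to 0$ is invariant for the limit kernel (by continuity of $A_{\bm{\gamma},\bm{\kappa},\textbf{h}}$ in $\bm{\gamma}$ and compactness of $\mathcal{P}(\bm{\Pi})$), one still gets $\mu^*_{\bm{\gamma},\bm{\kappa},\textbf{h}}(\bm{\Pi}_{\rm opt})\to 1$; your route (a) also works. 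So drop (b) as written, or replace it with the limit-point argument.
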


\begin{proof}
Since $\gamma^i, \kappa^i > 0$ for all $i$, the induced Markov chain is irreducible, hence there exists unique $\mu_{\bm{\gamma,\kappa, h}}^*$ such that $\mu_{\bm{\gamma,\kappa, h}}^* = \mu_{\bm{\gamma,\kappa, h}}^* A_{\bm{\gamma,\kappa, h}}$. We have 
\begin{flalign*}
\sum_{ \bm{\pi}^*  \in \bm{\Pi}_{\rm opt}} \mu_{\bm{\gamma,\kappa, h}}^* ( \bm{\pi}^* ) \\
=   \sum_{\bm{\pi}^* \in \bm{\Pi}_{\rm opt}} \sum_{\bm{\pi} \in \bm{\Pi}_{\rm opt}} \mu_{\bm{\gamma,\kappa, h}}^* ( \bm{\pi} )  A_{\bm{\gamma,\kappa, h}} ( \bm{\pi}, \bm{\pi}^* ) \\
\quad +  \sum_{\bm{\pi}^* \in \bm{\Pi}_{\rm opt}} \sum_{\bm{\pi} \notin \bm{\Pi}_{\rm opt}} \mu_{\bm{\gamma,\kappa, h}}^* ( \bm{\pi})  A_{\bm{\gamma,\kappa, h}} ( \bm{\pi}, \bm{\pi}^* )  \\
\geq    \sum_{\bm{\pi} \in \bm{\Pi}_{\rm opt}} \mu_{\bm{\gamma,\kappa, h}}^* ( \bm{\pi} ) \prod_i (1 - \gamma^i) \\
\quad+ \sum_{\bm{\pi} \notin \bm{\Pi}_{\rm opt}} \mu_{\bm{\gamma,\kappa, h}}^* ( \bm{\pi} ) \prod_i (\kappa^i/|\Pi^i|)
\end{flalign*}
%Feb 20th, 2021
%\begin{align*}
%\sum_{ \bm{\pi}^*  \in \bm{\Pi}_{\rm opt}} &\mu_{\bm{\gamma,\kappa, h}}^* ( \bm{\pi}^* ) \\
%&=   \sum_{\bm{\pi}^* \in \bm{\Pi}_{\rm opt}} \sum_{\bm{\pi} \in \bm{\Pi}_{\rm opt}} \mu_{\bm{\gamma,\kappa, h}}^* ( \bm{\pi} )  A_{\bm{\gamma,\kappa, h}} ( \bm{\pi}, \bm{\pi}^* ) \\
%&\quad +  \sum_{\bm{\pi}^* \in \bm{\Pi}_{\rm opt}} \sum_{\bm{\pi} \notin \bm{\Pi}_{\rm opt}} \mu_{\bm{\gamma,\kappa, h}}^* ( \bm{\pi})  A_{\bm{\gamma,\kappa, h}} ( \bm{\pi}, \bm{\pi}^* )  \\
%&\geq    \sum_{\bm{\pi} \in \bm{\Pi}_{\rm opt}} \mu_{\bm{\gamma,\kappa, h}}^* ( \bm{\pi} ) \prod_i (1 - \gamma^i) \\
%&\quad+ \sum_{\bm{\pi} \notin \bm{\Pi}_{\rm opt}} \mu_{\bm{\gamma,\kappa, h}}^* ( \bm{\pi} ) \prod_i (\kappa^i/|\Pi^i|)
%\end{align*}
%Old version, after boldfacing 
%\begin{align*}
%\sum_{ \bm{\pi}^*  \in \bm{\Pi}_{\rm opt}} \mu_{\bm{\gamma,\kappa, h}}^* ( \bm{\pi}^* ) =    \sum_{\bm{\pi}^* \in \bm{\Pi}_{\rm opt}} \sum_{\bm{\pi} \in \bm{\Pi}_{\rm opt}} \mu_{\bm{\gamma,\kappa, h}}^* ( \bm{\pi} )  A_{\bm{\gamma,\kappa, h}} ( \bm{\pi}, \bm{\pi}^* ) \\
% +  \sum_{\bm{\pi}^* \in \bm{\Pi}_{\rm opt}} \sum_{\bm{\pi} \notin \bm{\Pi}_{\rm opt}} \mu_{\bm{\gamma,\kappa, h}}^* ( \bm{\pi})  A_{\bm{\gamma,\kappa, h}} ( \bm{\pi}, \bm{\pi}^* )  \\
%\geq    \sum_{\bm{\pi} \in \bm{\Pi}_{\rm opt}} \mu_{\bm{\gamma,\kappa, h}}^* ( \bm{\pi} ) \prod_i (1 - \gamma^i) \\
%+ \sum_{\bm{\pi} \notin \bm{\Pi}_{\rm opt}} \mu_{\bm{\gamma,\kappa, h}}^* ( \bm{\pi} ) \prod_i (\kappa^i/|\Pi^i|)
%\end{align*}
This leads to
\[
\sum_{ \bm{\pi}^*  \in \bm{\Pi}_{\rm opt}} \mu_{\bm{\gamma,\kappa, h}}^* ( \bm{\pi}^* ) \geq 1 - \frac{ \sum_i \gamma^i}{\sum_i \gamma^i + \prod_i (\kappa^i/|\Pi^i|)}
\]
which implies (\ref{eq:mu*}). The last part follows from the aperiodicity of the Markov chain.
\end{proof}

Lemma~\ref{IdealizedProcessLemma} shows that if DMs follow the IUP, then they would choose a team-optimal policy in the long run with arbitrarily high probability provided the experimentation probabilities of $\bm{\gamma}$ are positive but sufficiently small relative to $\bm{\kappa}$. 

It is clear that the IUP cannot be directly implemented in our study of decentralized, online teams. The first issue relates to decentralization: DM$^i$ cannot observe the policy $\bm{\pi}^{-i}_k$. The second issue relates to the online nature of the problem: even if $\bm{\pi}^{-i}_k$ were known, DM$^i$ may not know its best-reply set or the set of team optimal policies. Nevertheless, the IUP motivates our decentralized learning algorithm, Algorithm~\ref{al:main}, which can be viewed as a two timescale approximation of the IUP. We expand on this point below, after presenting the main result of this section.

\begin{algorithm}[h] \label{al:main}
\SetAlgoLined
\DontPrintSemicolon
\SetKw{Receive}{Receive}
\SetKw{parameters}{Set Parameters}
\SetKw{initialize}{Initialize}
%\SetKwFor{For}{for}{}{endfor}
\SetKwBlock{For}{for}{end} 

\parameters \;
\Indp 
$\mathbb{Q}^i \subset \mathbb{R}^{ \mathbb{X}\times\mathbb{U}^i }$: a compact set \;
$\{ T_k \}_{k \geq 0}$: a sequence in $\mathbb{N}_+$ of exploration phase lengths (common to all DMs) \;
\Indp Set $t_0 = 0$ and $t_{k+1} = t_k + T_k$ for all $k \geq 0.$ \;
\Indm $\rho^i\in(0,1)$: action experimentation probability \;
$\gamma^i, \kappa^i \in ( 0, 1 )$: policy experimentation probabilities \; 
$\lambda^i\in [0,1]$: probability of inertia when updating baseline policy \;
$\delta^i > 0$: tolerance for sub-optimality when constructing best-reply sets \;
$d^i > 0$: a tolerance for sub-optimality when setting the aspiration level \;
$W^i \in \mathbb{N}_+$: a window for setting aspiration levels \; 
$h^i \in \mathcal{P} ( \Pi^i | \Pi^i \times 2^{\Pi^i} )$, a policy update kernel  \; 
$\{\alpha_{n}^{i}\}_{n\geq0}$: step sizes such that $\alpha_{n}^{i}\in[0,1]$, $\sum_n\alpha_n^{i} = \infty$, $\sum_{n} \big(\alpha_n^{i}\big)^2 < \infty$ \;
\Indm 
\BlankLine

\initialize (arbitrary) baseline policy $\pi_0^i \in \Pi^i$, $Q_0^i\in\mathbb{Q}^i$ \\
\Receive $x_0$ \\

\For($k \geq 0$ ($k^{th}$ exploration phase{)} ){ %i  had to put { and } around ) because otherwise it misunderstands it as ending the for-loops condition. 
	\For( $t = t_k, t_k +1, \dots, t_{k+1} - 1$  \tcp*[f]{ Learn best-replies for $k^{th}$ EP} ){
	Select  $u^i_t \sim (1-\rho^i) \mathbb{I}_{\pi^i_k ( x_t ) } + \rho^i \uniform ( \mathbb{U}^i)$				\;
	%$u^i_t = 	\begin{cases} 
	%			\pi^i_k ( x_t ), \quad &{\rm w.p. } \quad 1 - \rho^i \\ 
	%			u^i \sim \uniform ( \mathbb{U}^i ), \quad &{\rm w.p. } \quad \rho^i 
	%		\end{cases}$ \;
	\Receive cost $c^i( x_t, u^i_t, \textbf{u}^{-i}_t)$ \;
	\Receive state $x_{t+1} \sim P ( \cdot | x_t, \textbf{u}_t) $ \; 
	Set $n_t^i =$ number of visits to $(x_t,u_t^i)$ in $[t_k, t]$ \;
	$Q_{t+1}^i(x_t,u_t^i)  =   (1-\alpha_{n_t^i}^{i})Q_t^i(x_t,u_t^i) + \alpha_{n_t^i}^{i}  \big[ c^i(x_t,u_t^i, \textbf{u}_t^{-i}) +  \beta^i \min_{v^i} Q_t^i(x_{t+1},v^i) \big]$  \; 
	 $Q_{t+1}^i(x,u^i) =   Q_t^i(x,u^i)$,   $\forall (x,u^i)\not=(x_t,u_t^i)$\; 
	 }
	${\rm BR}^i_k = \{ \pi^i \in \Pi^i : Q^i_{t^i_{k+1}} ( x, \pi^i ( x) ) \leq \min_{v^i} Q^i_{t^i_{k+1}} (x, v^i) + \delta^i, \forall x \in \mathbb{X} \}$ \;
	$S^i_k = \sum_{x \in \mathbb{X}} Q^i_{t^i_{k+1}} (x, \pi^i_k ( x))$ \;
	$\Lambda^i_k = \min \{ S^i_{k-1}, \dots, S^i_{(k - W^i )^+} \} + d^i$ \;

	%If suite template from the asynchronous paper: 	
%	\If(\tcp*[f]{Baseline policy update}){ $\pi^i_k \in \BR^i_k$ }{ $\pi^i_{k+1} \leftarrow \pi^i_k$ }
%	\Else(){
%		$\pi^i_{k+1} \leftarrow 	\begin{cases} 	
%								\pi^i_k , &{\rm w.p. } \quad \lambda^i \\
%								\pi^i \sim \uniform ( \BR^i_k ), &{\rm w.p.} \quad 1 - \lambda^i 	
%							\end{cases}$
%		\BlankLine
%	}

	\lIf(){ $S^i_k \leq \Lambda^i_k$ }{ $\pi^i_{k+1} \sim (1-\gamma^i) R^{i, \lambda^i} ( \cdot | \pi^i_k , \BR^i_k ) + \gamma^i \uniform ( \Pi^i )$ }
	\Else({(} $S^i_k > \Lambda^i_k$, not achieving aspiration{)}){
	$\pi^i_{k+1} \sim (1 - \kappa^i ) h^i ( \cdot | \pi^i_k , \BR^i_k ) + \kappa^i \uniform ( \Pi^i ) $
	}

	Reset $Q_{t^i_{k+1}}^i$ to any $Q^i\in\mathbb{Q}^i$ (e.g., project onto $\mathbb{Q}^i $) \;
}

\caption{Independent Team Q-Learning for DM$^i$}
\end{algorithm}

We emphasize that Algorithm \ref{al:main} is decentralized in the sense that it can be implemented by ``independent learners," in the terminology of \cite{matignon2012Survey,zhang2019survey}. That is, each DM$^i$ can run a separate copy of this algorithm without reference to the joint actions or policies of the remaining players. We recall that each DM$^i$'s interaction with its environment at any time $t$ consists of sending its control decision $u_t^i$ and receiving its cost realization $c^i(x_t,u_t^1,\dots,u_t^N)$ as well as the next state $x_{t+1}$ without observing any information about the other DMs, in particular, without observing the control decisions $\textbf{u}_t^{-i}$ of the other DMs. In fact, each DM$^i$ need not even be aware of the presence of the other DMs or the fact it is engaged in learning in a multi-player game. Simply, each DM is running a single-agent algorithm similar to standard Q-learning (that is re-initialized after its baseline policy is updated at the end of each exploration phase). As such, all quantities computed by DM$^i$'s copy of Algorithm~\ref{al:main} are indexed by $i$. These remarks also apply verbatim to Algorithm~\ref{al:co-main} introduced in Section~\ref{sec:constasp}.

\begin{assumption} \label{noTransientStates}
For all $x, x' \in \mathbb{X}$, there exists $H \in\mathbb{N}$ and $\tilde{\textbf{u}}_0, \dots, \tilde{\textbf{u}}_H\in\mathbb{U}$ such that
\[
\P (x_{H+1}= x' \vert x_0 = x, \textbf{u}_j = \tilde{\textbf{u}}_j, \forall j \in \{0, 1, \dots, H\} ) > 0.
\]
\end{assumption}

%Assumption \ref{noTransientStates} is common to reinforcement learning methods. If it does not hold, then there exist transient states that will be visited a number of times and then never revisited. The play in these states affects the long run discounted cost, but there will be no opportunity for subsequent experimentation, and even cleverly designed learning algorithms will fail to reliably find optimal policies in such settings.

%\bmy{Commentary on assumption 1: in games where this does not hold, there may be a state that is transient. If this is the case, then Q-learning will not find a globally optimal behaviour, since the action in that state influences the long run payoff (as we consider discounted and not average cost), but cannot be simulated infinitely often.}

\begin{assumption} \label{suffSmallAssumption}
Assume, for all $i$, $\delta^i \in(0, \bar{\delta})$, $d^i \in (0, \bar{d})$, $\rho^i \in (0,\bar{\rho})$, where $\bar{\delta}$, $\bar{d}$, $\bar{\rho}$ are constants defined in Appendix~A that depend only on the game.
\end{assumption}

\begin{theorem} \label{theorem1}
Consider a common interest game in which each DM$^i$ uses Algorithm~\ref{al:main}, and let Assumptions~\ref{noTransientStates}-\ref{suffSmallAssumption} hold. For any $\epsilon > 0$, there exist
\begin{gather*}
\bar{\gamma}_{\epsilon}(\bm{\kappa}) \in(0,1), \quad
\bar{W}_{\epsilon}(\bm{\gamma,\kappa}) \in \mathbb{N}_+, \quad
\bar{T}_{\epsilon} (\bm{\gamma,\kappa},W_\max) \in \mathbb{N}_+
\end{gather*}
where $W_\max:=\max_i W^i$ such that if, for all $i$, $k\in\mathbb{N}$
\begin{gather*}
\gamma^i \in (0, \bar{\gamma}_{\epsilon}(\bm{\kappa})), \quad W^i \geq \bar{W}_{\epsilon}(\bm{\gamma,\kappa}), \quad T_k  \geq \bar{T}_{\epsilon} (\bm{\gamma,\kappa},W_\max)
\end{gather*}
then
\[
\liminf_{k\in\mathbb{N}} \P ( \bm{\pi}_k \in \bm{\Pi}_{\rm opt}) \geq 1 - \epsilon
\]
\end{theorem}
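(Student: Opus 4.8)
The plan is a two--timescale argument. On the \emph{fast} timescale a single exploration phase of length $T_k$ is long enough that each DM$^i$'s truncated, cost--noisy Q--learning recursion returns accurate information; on the \emph{slow} timescale the baseline--policy process $\{\bm{\pi}_k\}$ is shown to track the idealized chain of Algorithm~\ref{al:IUP}, at which point Lemmas~\ref{QfactorsFact} and \ref{IdealizedProcessLemma} apply. For the idealized chain, first fix $\bm{\kappa}$ and take $\bar{\gamma}_\epsilon(\bm{\kappa})$ from Lemma~\ref{IdealizedProcessLemma}, so that $\mu^*_{\bm{\gamma},\bm{\kappa},\textbf{h}}(\bm{\Pi}_{\rm opt}) \geq 1-\epsilon/2$ whenever every $\gamma^i < \bar{\gamma}_\epsilon(\bm{\kappa})$. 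Since in \emph{both} branches of Algorithm~\ref{al:IUP} the uniform--experimentation component gives $A_{\bm{\gamma},\bm{\kappa},\textbf{h}}(\bm{\pi},\tilde{\bm{\pi}}) \geq p_{\min} := \prod_i \min\{\gamma^i,\kappa^i\}/|\Pi^i| > 0$ for all $\bm{\pi},\tilde{\bm{\pi}} \in \bm{\Pi}$, the chain satisfies a Doeblin condition with constant independent of $\textbf{h}$, hence mixes geometrically; fix an integer $N_0$, depending only on $\epsilon,\bm{\gamma},\bm{\kappa}$, so that $(\nu A_{\bm{\gamma},\bm{\kappa},\textbf{h}}^{N_0})(\bm{\Pi}_{\rm opt}) \geq 1-\epsilon/2-\epsilon/8$ for every initial $\nu$. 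Finally write $\Sigma^i(\bm{\pi}) := \sum_{x \in \mathbb{X}} Q^i_{\bm{\pi}^{-i}}(x,\pi^i(x))$ and $V^{*i} := \sum_{x} J^i_x(\bm{\pi}^*)$ for $\bm{\pi}^* \in \bm{\Pi}_{\rm opt}$; since a team optimal policy is an equilibrium, $\Sigma^i(\bm{\pi}^*) = V^{*i}$, and by Lemma~\ref{QfactorsFact} together with finiteness of $\bm{\Pi}$ the separation $g := \min_i \min_{\tilde{\bm{\pi}} \notin \bm{\Pi}_{\rm opt}}(\Sigma^i(\tilde{\bm{\pi}}) - V^{*i})$ is strictly positive, while $\Sigma^i(\bm{\pi}) \geq V^{*i}$ for all $\bm{\pi}$.

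For the fast timescale, within the $k^{\rm th}$ phase each DM$^i$ faces a stationary MDP determined by $\bm{\pi}_k$ and the experimentation probabilities; Assumption~\ref{noTransientStates} makes the induced state chain irreducible so that every local state--action pair recurs, and Assumption~\ref{suffSmallAssumption} (with $\bar{\rho},\bar{\delta}$ read off the game's Q--factor gaps) ensures this MDP's optimal Q--factors approximate $Q^{*i}_{\bm{\pi}_k^{-i}}$ to any prescribed tolerance. Standard finite--time Q--learning estimates, made uniform over the finitely many $\bm{\pi}_k$ and over the compact reset set $\mathbb{Q}^i$, then yield: for any $\eta,\eta' > 0$ there is $\bar{T}$ such that $T_k \geq \bar{T}$ implies, conditionally on the history up to phase $k$, that $\P(G_k) \geq 1-\eta'$, where $G_k$ denotes ``for all $i$, $\,{\rm BR}^i_k = \BR^i(\bm{\pi}_k^{-i})$ and $|S^i_k - \Sigma^i(\bm{\pi}_k)| < \eta$.'' We take $\eta < \tfrac13\min\{\min_i d^i,\ g-\max_i d^i\}$, which is positive by Assumption~\ref{suffSmallAssumption}.

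For the slow timescale, choose $\bar{W}_\epsilon(\bm{\gamma},\bm{\kappa})$ so that $(N_0+W)(1-p_{\min})^{W} \leq \epsilon/8$ for all $W \geq \bar{W}_\epsilon(\bm{\gamma},\bm{\kappa})$, and then $\bar{T}_\epsilon(\bm{\gamma},\bm{\kappa},W_\max)$ as above with $\eta'$ small enough that $(N_0+W_\max)\eta' \leq \epsilon/8$. For $k \geq N_0+W_\max$ let $\mathcal{G}_k$ be the event that $G_j$ holds for every $j \in [k-N_0-W_\max,\,k-1]$ and that no $W_\min := \min_i W^i$ consecutive phases in $[k-N_0-W_\max,\,k-1]$ lie entirely outside $\bm{\Pi}_{\rm opt}$. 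Since each DM experiments to a fixed team optimal policy with probability at least $p_{\min}$ at every phase irrespective of the branch taken, a union bound gives $\P(\mathcal{G}_k^c) \leq \epsilon/8+\epsilon/8 = \epsilon/4$, uniformly in $k$. On $\mathcal{G}_k$, couple an idealized chain $\{\hat{\bm{\pi}}_m\}_{m\geq 0}$ to $\{\bm{\pi}_{k-N_0+m}\}$ using common experimentation and inertial--best--response randomness with $\hat{\bm{\pi}}_0 = \bm{\pi}_{k-N_0}$; the claim is $\bm{\pi}_{k-N_0+m} = \hat{\bm{\pi}}_m$ for $0 \leq m \leq N_0$, by induction. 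At phase $j = k-N_0+m$, $\mathcal{G}_k$ guarantees the aspiration window $[(j-W^i)^+,\,j-1]$ contains a phase at which both $G$ holds and the policy is in $\bm{\Pi}_{\rm opt}$, so $\Lambda^i_j \in [V^{*i}+d^i-\eta,\,V^{*i}+d^i+\eta]$; then the choice of $\eta$ forces $S^i_j \leq \Lambda^i_j$ for all $i$ when $\bm{\pi}_j \in \bm{\Pi}_{\rm opt}$ and, via Lemma~\ref{QfactorsFact}, $S^i_j > \Lambda^i_j$ for all $i$ when $\bm{\pi}_j \notin \bm{\Pi}_{\rm opt}$, so Algorithm~\ref{al:main} and Algorithm~\ref{al:IUP} take the same branch; since $G_j$ also gives ${\rm BR}^i_j = \BR^i(\bm{\pi}_j^{-i})$, their policy--update kernels coincide, and the coupled randomness yields $\bm{\pi}_{j+1} = \hat{\bm{\pi}}_{m+1}$.

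Combining: for $k \geq N_0 + W_\max$, on $\mathcal{G}_k$ we have $\bm{\pi}_k = \hat{\bm{\pi}}_{N_0}$ for an idealized chain started at $\bm{\pi}_{k-N_0}$, so $\P(\bm{\pi}_k \notin \bm{\Pi}_{\rm opt}) \leq \P(\mathcal{G}_k^c) + \sup_{\nu}(\nu A_{\bm{\gamma},\bm{\kappa},\textbf{h}}^{N_0})(\bm{\Pi}_{\rm opt}^c) \leq \epsilon/4 + (\epsilon/2+\epsilon/8) < \epsilon$ uniformly in $k$, which gives the claimed $\liminf$. The two points requiring genuine work are: the uniform finite--time control of the truncated, cost--noisy Q--learning recursion over all baseline joint policies and all Q--factor resets (this is where Assumptions~\ref{noTransientStates}--\ref{suffSmallAssumption} and the bulk of the appendix are used); and arranging the slow--timescale comparison so that the coupling need only be sustained for the \emph{fixed} horizon $N_0$ rather than for all phases --- a coupling over all phases would fail almost surely since $\P(G_k) < 1$. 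The resolution is that ``calibration'' of the aspiration levels is maintained cheaply, by plain experimentation alone, so that $\bar{W}$ only has to dominate a geometric tail and the comparison window can be taken to be the idealized chain's mixing time.
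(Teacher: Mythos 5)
Your proposal is correct, and it shares the paper's overall architecture: the idealized chain of Lemma~\ref{IdealizedProcessLemma}, the cost-score separation of Lemma~\ref{QfactorsFact}, the finite-time Q-learning control within an exploration phase, and a high-probability ``good event'' under which Algorithm~\ref{al:main} takes the same branch and uses the same best-reply sets as the IUP. Your $\mathcal{G}_k$ plays exactly the role of the paper's $R_k = F_k \cap \bigcap_{\ell} G_{k-\ell}\cap\bigcup_{\ell}H_{k-\ell}$, including the key observation that pure experimentation revisits a team-optimal policy within any window of length $W^i$ with probability $1-(1-p_{\min})^{W^i}$, so that $\Lambda^i_k\approx V^{*i}+d^i$ and $S^i_k\le\Lambda^i_k\Leftrightarrow\bm{\pi}_k\in\bm{\Pi}_{\rm opt}$; your gap condition $2\eta<\min\{d^i,\,g-d^i\}$ is the same role played by the paper's $\bar d$ and the halved tolerances in $F_k,G_k$. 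Where you genuinely depart is the final step. The paper bounds $\|A_k-A_{\bm{\gamma},\bm{\kappa},\mathbf{h}}\|_\infty\le\tau=\sigma(A_{\bm{\gamma},\bm{\kappa},\mathbf{h}})\epsilon/(4|\bm{\Pi}|)$ for all large $k$ and feeds this into a Dobrushin-coefficient perturbation lemma for the inhomogeneous product $\mu_0A_0\cdots A_k$ (Lemma~\ref{irreducibleMatrixApproxLemma}), which also forces it to convert the unconditional bound $\P(R_k)\ge1-\phi\tau$ into the conditional bound $\P(R_k\mid\bm{\pi}_k=\bm{\pi})\ge1-\tau$ via the uniform minorization $\P(\bm{\pi}_k=\bm{\pi})\ge\phi$. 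You instead run an exact pathwise coupling with the IUP chain over one window of length $N_0$ equal to its mixing time and pay only $\P(\mathcal{G}_k^c)$ plus the mixing error; this sidesteps both the matrix-product perturbation lemma and the conditioning step, at the price of having to set up the coupling (fresh experimentation/inertia randomness independent of $\bm{\pi}_{k-N_0}$, a common driving uniform for $R^{i,\lambda^i}$) carefully. Both routes rest on the same Doeblin constant $p_{\min}=\prod_i\min\{\gamma^i,\kappa^i\}/|\Pi^i|$; the paper's version yields the slightly stronger conclusion that the law of $\bm{\pi}_k$ converges to an $\epsilon/2$-neighborhood of $\mu^*_{\bm{\gamma},\bm{\kappa},\mathbf{h}}$ in total variation, not only that $\bm{\Pi}_{\rm opt}$ eventually carries mass $1-\epsilon$. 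One small bookkeeping fix: the condition $(N_0+W)(1-p_{\min})^W\le\epsilon/8$ evaluated at $W=W_{\min}$ does not cover a union bound over the $N_0+W_{\max}$ positions of a bad window when $W_{\max}\gg W_{\min}$; restrict the union to the $N_0N$ windows $[(j-W^i)^+,\,j-1]$, $j\in[k-N_0,k-1]$, actually used in your induction (or define $\bar W_\epsilon$ by $N_0N(1-p_{\min})^{\bar W_\epsilon}\le\epsilon/8$), and the estimate goes through.
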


%\bmy{The parenthetical parameters are to show the dependencies. Also, as noted in the appendix, the proof of Theorem 1 shows that instead of a liminf statement, I can even say $\exists M \in \mathbb{N} : k \geq M$ implies $P (\pi_k \in \Pi^{OPT} ) \geq 1 - \epsilon$. }

%%%%%%%%%%%%%%%%%%%%%%%%%%%%%%%%%%%%%%%%%%%%%%%%%%%%%%%%%%%%%%%%%%%%%%%%%%%%%%%%%%%%%%%%%%%%%%%%%%%%%

%%%%%%%%%%%%%%%%%%%%%%%%%%%%%%%%%%%%%%%%%%%%

%\input{theoremA}

\begin{proof} See Appendix~A. \end{proof}

\subsection*{Discussion}

Algorithm \ref{al:main} can be viewed as a two timescale\footnote{In two timescale algorithms in the literature (e.g.  \cite{borkar1997stochastic}), both the Q-factors and the policies would be updated incrementally at each time $t=1,2,\dots$. The step size sequences for Q-learning and policy updating would be selected so that policies are effectively fixed while Q-factors are learned. In our algorithms, the policies are updated without using any step sizes but only only at $t=t_1-1,t_2-1,\dots$ whereas the Q-factors are updated at each time $t=1,2,\dots$ using step sizes that are re-initialized at $t=t_1-1,t_2-1,\dots$ and are reduced during $t\in[t_k,t_{k+1}-1)$ at a rate satisfying the assumptions of the standard (i.e., one time scale) stochastic approximation theory. }
 approximation to the IUP in Algorithm \ref{al:IUP}. The faster timescale is that where time is indexed by the stage games, comprising lines 16-23 of Algorithm~\ref{al:main}. The selection of actions, observation of costs and state transitions, and Q-factor updates all occur on this faster timescale. In contrast, the slower timescale is where time is indexed by the exploration phase. Decisions on the slower timescale involve processing learned Q-factors to estimate one's best-reply set (line 24), computing a ``cost score'' and comparing it to historical cost scores (lines 25-27), and updating one's baseline policy (lines 27-31). 

 Crucially, the baseline policies are fixed within an exploration phase and only change between exploration phases. This means that for any $k \geq 0$, from the point of view of any DM$^i$, the environment is stationary within the $k^{th}$ EP and equivalent to an MDP determined by $\bm{\pi}^{-i}_k$. It was shown in \cite{AY2017} that under certain conditions---satisfied here by Assumptions \ref{noTransientStates} and \ref{suffSmallAssumption}---that Q-learning within an EP leads to informative Q-factors that can be used, among other things, to recover one's best-reply set $\BR^i ( \bm{\pi}^{-i}_k )$. 
 
The analogy between Algorithm~\ref{al:main} and the IUP can be seen by comparing the if-suite (lines 6-9) in Algorithm~\ref{al:IUP} with its counterpart (lines 27-30) in Algorithm~\ref{al:main}. The unobservable condition $\bm{\pi}_k \in \bm{\Pi}_{\rm opt}$ of the IUP has been replaced by a surrogate condition $S^i_k \leq \Lambda^i_k$. Here, $S_k^i$ is a ``cost score," which aggregates DM$^i$'s policy performance across all states for the $k^{th}$ exploration phase, and $\Lambda_k^i$ is a measure of DM$^i$'s best performance during the preceding $W^i$ exploration phases. Importantly, the condition $S^i_k \leq \Lambda^i_k$ can be verified by independent learners.

Algorithm~\ref{al:main} is in the spirit of aspiration learning algorithms \cite{chasparis2013aspiration}, where $\Lambda_k^i$ plays the role of DM$^i$'s aspiration level, a scalar quantity against which DM$^i$ compares the performance of its policy $\pi_k^i$ during the $k^{th}$ exploration phase. Each DM$^i$ aspires to perform at least as well as its aspiration level, which is updated at the end of each exploration phase and may be thought of as a maximum tolerable cost, i.e., if the baseline policy yields higher cost, then it is viewed as unsatisfactory.

Unlike the aspiration learning methods in the literature, which focus on repeated games with no state dynamics and players with no look ahead, Algorithm~\ref{al:main} is designed for stochastic dynamic games with nontrivial state dynamics and far-sighted players. Due to the long-run cost considerations in dynamic stochastic games, evaluating of the cost of a policy is a slow and noisy process, which leads to additional difficulties in setting the aspiration levels.

In light of Lemma \ref{QfactorsFact}, a viable approach is to use the learned Q-factors to produce cost scores and to set the aspiration levels to the minimum cost score over some window of the past. However, scores obtained from the (random) Q-factors are noisy estimates of the scores corresponding to the true cost of the policies. In particular, setting the aspiration levels to the minimum of the cost scores over the entire past based on the learned Q-factors can result in unattainable aspiration levels. Hence, to mitigate the effects of the noise present in the learned Q-factors, we set the aspiration levels of each DM$^i$ to the minimum cost score obtained over a finite window of the most recent past within some tolerance. This allows DMs to discard unattainable cost scores in finite time.

Another aspect of Algorithm~\ref{al:main} is the persistent experimentation in the policy space. Experimentation when DMs feel that they meet their aspiration levels ($S_k^i\leq\Lambda_k^i)$ is required to prevent DMs settling in a policy that is not team-optimal. This is due to the finite window approach used for setting the aspiration levels and the possibility of setting suboptimal aspiration levels. Experimentation when ($S_k^i>\Lambda_k^i)$ is also necessary to aid DMs in searching for team-optimal policies.

Finally, we note that the set of approximate best responses $\BR_k^i$ computed by each DM$^i$ within each exploration phase $k$ is a subset of $\Pi^i$, the set of stationary and deterministic policies of DM$^i$. Therefore, $|\BR_k^i|\leq|\Pi^i|=|\mathbb{U}^i|^{|\mathbb{X}|}$. We note that  $\BR_k^i$ is computed via the Q-factors $Q^{i}_{t_k+1} \in \mathbb{R}^{\mathbb{X} \times \mathbb{U}^i}$, which is of size $|\mathbb{X}| |\mathbb{U}^i|$.

%Remove newpage command once the prev. section is formatted right. 
%\newpage
\section{Beyond Team Optimality: Application to Weakly Acyclic Games} \label{sec:weaklyAcyclic}

In this section, we consider a special case of Algorithm~\ref{al:main} that has desirable convergence properties in weakly acyclic games, in addition to providing team-optimality in the sense of Theorem~\ref{theorem1}.

\begin{definition}
A (possibly finite) sequence $\bm{\pi}_0 , \bm{\pi}_1 , \dots$ in $\bm{\Pi}$ is called a multi-DM strict best reply path if, for each $k$, $\bm{\pi}_k$ and $\bm{\pi}_{k+1}$ differ for at least one DM and, for each deviating DM$^i$, $\bm{\pi}_{k+1}^i$ is a strict best reply with respect to $\bm{\pi}_k$.
\end{definition}

\begin{definition} A stochastic game is called weakly acyclic under multi-DM strict best replies (or simply weakly acyclic) if there is a multi-DM strict best reply path starting from each deterministic joint policy and ending at a deterministic equilibrium policy.
\end{definition}

The notion of weak acyclicity used here is with respect to stationary deterministic policies for stochastic games, and generalizes the notion of weak acyclicity introduced in \cite{young-book} for single-stage games. All teams are weakly acyclic; however, a common interest game need not be. See \cite{fabrikant2010structure} for other examples of single-stage weakly acyclic games.

In weakly acyclic games, the inertial best reply dynamics \cite{AY2017} lead to equilibrium policies. If the policy update functions satisfy $h^i = R^{i, \lambda^i}$ for each DM$^i$, the IUP introduced in the previous section can be regarded as a perturbed inertial best reply dynamics, where $\{\bm{\pi}_k\in \bm{\Pi}_{\rm opt}\}$ can be replaced with any arbitrary event if the game is not a common interest game, provided the induced Markov chain is time-homogenous.

\begin{assumption}
\label{as:iup2}
For every DM$^i$, $h^i = R^{i,\lambda^i}$, where $\lambda^i \in (0,1)$.
\end{assumption}

Under Assumption~\ref{as:iup2}, each DM$^i$ always best replies with inertia when not experimenting.

\begin{lemma} \label{IdealizedProcessLemma2}
Consider a weakly acyclic game. Suppose that each DM$^i$ updates its policy according to the IUP of Algorithm \ref{al:IUP}, and let Assumption~\ref{as:iup2} hold. Let $A_{\bm{\gamma, \kappa}}$ denote the matrix of the transition probabilities for the induced time homogenous Markov chain on $\bm{\Pi}$.
Denote the unique stationary distribution associated to this Markov chain by $\mu^*_{\bm{\gamma, \kappa}}$. For any $\epsilon > 0$, there exists $\bar{\kappa}_{\epsilon} \in (0,1)$  such that $\max\{\gamma^i,\kappa^i\} \in (0 , \bar{\kappa}_{\epsilon} )$, for all $i$, implies
\begin{equation}
\nonumber
\mu^*_{\bm{\gamma, \kappa}} ( \bm{\Pi}_{\rm eq} ) \geq 1 - \epsilon/4.
\end{equation}
Moreover, uniformly over all such $\bm{\gamma, \kappa}$, there exists $\bar{m}\in\mathbb{N}$ such that
$$\inf_{m\geq\bar{m}, \mu_0\in\mathcal{P}(  \bm{\Pi} )} (\mu_0 A_{\bm{\gamma, \kappa}}^m)(\bm{\Pi}_{\rm eq}) \geq 1-\epsilon/2.$$
\end{lemma}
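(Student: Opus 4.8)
The plan is to regard the IUP under Assumption~\ref{as:iup2} as a vanishing perturbation of the pure inertial best-reply dynamics and to carry out a perturbed-Markov-chain argument, but with the constants chosen so that the bounds are uniform over the admissible $\bm{\gamma,\kappa}$. Let $P^{0}$ denote the transition kernel on $\bm{\Pi}$ in which every DM$^i$ always draws $\pi^i_{k+1}\sim R^{i,\lambda^i}(\cdot\mid\pi^i_k,\BR^i(\bm{\pi}^{-i}_k))$; this is well defined and does not depend on $\bm{\gamma,\kappa}$ or on which branch of the IUP is active, since $h^i=R^{i,\lambda^i}$ by assumption. First I would record that every $\bm{\pi}^*\in\bm{\Pi}_{\rm eq}$ is absorbing for $P^{0}$ (at an equilibrium each DM is already best-replying, so the first case of \eqref{eq:Ri} gives $P^{0}(\bm{\pi}^*,\bm{\pi}^*)=1$), and that from every $\bm{\pi}\notin\bm{\Pi}_{\rm eq}$ there is a $P^{0}$-path of positive probability into $\bm{\Pi}_{\rm eq}$: by weak acyclicity pick a multi-DM strict best reply path $\bm{\pi}=\bm{\pi}_0,\dots,\bm{\pi}_\ell\in\bm{\Pi}_{\rm eq}$; along each step every deviating DM$^i$ is, by definition of the path, strictly improving and hence not best-replying, so $R^{i,\lambda^i}$ assigns positive probability $(1-\lambda^i)/|\BR^i(\bm{\pi}^{-i}_j)|$ to the prescribed move, while each non-deviating DM$^i$ stays with probability at least $\lambda^i>0$ (here $\lambda^i\in(0,1)$ is essential). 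Since $\bm{\Pi}$ is finite, $\bm{\Pi}_{\rm eq}$ is the absorbing set of $P^{0}$, so $m\mapsto(P^{0})^{m}(\bm{\pi},\bm{\Pi}_{\rm eq})$ is non-decreasing with limit $1$ for every $\bm{\pi}$, and there is $m_0=m_0(\epsilon)$ with $(P^{0})^{m_0}(\bm{\pi},\bm{\Pi}_{\rm eq})\geq1-\epsilon/8$ for all $\bm{\pi}$. This step is in essence the weakly-acyclic convergence result of \cite{AY2017}.

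Next I would dominate $A_{\bm{\gamma,\kappa}}$ by $P^{0}$. Put $\eta:=\max_i\max\{\gamma^i,\kappa^i\}$. At any state $\bm{\pi}$ each DM$^i$ experiments with probability at most $\eta$, independently, and the experimentation indicator of DM$^i$ is independent of its $R^{i,\lambda^i}$-draw; hence on the event that no DM experiments the one-step law is exactly $P^{0}(\bm{\pi},\cdot)$, so $A_{\bm{\gamma,\kappa}}(\bm{\pi},\cdot)\geq(1-\eta)^{N}P^{0}(\bm{\pi},\cdot)\geq(1-N\eta)P^{0}(\bm{\pi},\cdot)$ as measures. Iterating, $A_{\bm{\gamma,\kappa}}^{m_0}(\bm{\pi},\cdot)\geq(1-N\eta)^{m_0}(P^{0})^{m_0}(\bm{\pi},\cdot)$. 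Choosing $\bar{\kappa}_\epsilon\in(0,1)$ so small that $(1-N\bar{\kappa}_\epsilon)^{m_0}\geq1-\epsilon/8$ yields, for every $\bm{\pi}$ and every admissible $\bm{\gamma,\kappa}$, the uniform bound $A_{\bm{\gamma,\kappa}}^{m_0}(\bm{\pi},\bm{\Pi}_{\rm eq})\geq(1-\epsilon/8)^{2}\geq1-\epsilon/4$.

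Both assertions then follow at once. For the stationary distribution, $\mu^{*}_{\bm{\gamma,\kappa}}=\mu^{*}_{\bm{\gamma,\kappa}}A_{\bm{\gamma,\kappa}}^{m_0}$ gives $\mu^{*}_{\bm{\gamma,\kappa}}(\bm{\Pi}_{\rm eq})\geq\min_{\bm{\pi}}A_{\bm{\gamma,\kappa}}^{m_0}(\bm{\pi},\bm{\Pi}_{\rm eq})\geq1-\epsilon/4$. For the transient bound, set $\bar{m}:=m_0$; for any $m\geq\bar{m}$, any $\mu_0\in\mathcal{P}(\bm{\Pi})$, and any admissible $\bm{\gamma,\kappa}$, writing $\nu:=\mu_0A_{\bm{\gamma,\kappa}}^{m-m_0}$ we get $(\mu_0A_{\bm{\gamma,\kappa}}^{m})(\bm{\Pi}_{\rm eq})=\sum_{\bm{\pi}}\nu(\bm{\pi})A_{\bm{\gamma,\kappa}}^{m_0}(\bm{\pi},\bm{\Pi}_{\rm eq})\geq1-\epsilon/4\geq1-\epsilon/2$, which is the claimed infimum bound.

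The main obstacle is the first step: identifying $\bm{\Pi}_{\rm eq}$ as exactly the absorbing set of $P^{0}$, which is where weak acyclicity and strict inertia $\lambda^i\in(0,1)$ are indispensable, since one must realize a multi-DM strict best reply path with positive probability under the \emph{inertial} rule — i.e.\ hold the non-deviating DMs in place by inertia while the deviating DMs make their prescribed strictly-improving moves. The remaining steps are routine perturbation bookkeeping; the only care needed is the order in which the constants are fixed ($m_0$ depending only on $P^{0}$ and $\epsilon$, and then $\bar{\kappa}_\epsilon$ depending on $m_0$, $N$, $\epsilon$), so that the resulting bounds are genuinely uniform over the admissible $\bm{\gamma,\kappa}$.
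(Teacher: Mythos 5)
Your proof is correct, and its core is the same as the paper's: equilibria are absorbing under the unperturbed inertial best-reply kernel, and weak acyclicity plus strict inertia $\lambda^i\in(0,1)$ lets you realize a multi-DM strict best reply path with positive probability (deviators move via the $(1-\lambda^i)/|\BR^i|$ branch of \eqref{eq:Ri}, non-deviators are held in place by inertia). Where you differ is in how the experimentation is handled. The paper works directly with $A_{\bm{\gamma,\kappa}}$: it lower-bounds the $L$-step probability of reaching $\bm{\Pi}_{\rm eq}$ by $p_{\min}\prod_i(1-\max\{\gamma^i,\kappa^i\})^L$ with the explicit constant $p_{\min}=\prod_i\min\{\lambda^i,(1-\lambda^i)/|\Pi^i|\}^L$, and then runs an additive recursion $(\mu_0A^{k+L})(\bm{\Pi}\setminus\bm{\Pi}_{\rm eq})\leq L\sum_i\max\{\gamma^i,\kappa^i\}+(1-p_{\min})(\mu_0A^{k})(\bm{\Pi}\setminus\bm{\Pi}_{\rm eq})$, yielding the quantitative bound $L\sum_i\max\{\gamma^i,\kappa^i\}/p_{\min}+(1-p_{\min})^j$. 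You instead isolate the unperturbed kernel $P^{0}$, use monotone absorption of $(P^{0})^{m}$ into $\bm{\Pi}_{\rm eq}$ to fix $m_0$, and transfer via the entrywise domination $A_{\bm{\gamma,\kappa}}^{m_0}\geq(1-N\eta)^{m_0}(P^{0})^{m_0}$. Your route is arguably cleaner and makes the order of quantifiers (first $m_0$ from $P^{0}$ and $\epsilon$, then $\bar\kappa_\epsilon$ from $m_0$) transparent; the paper's route produces an explicit rate in terms of $p_{\min}$ and $L$, which is reused in the proof of Theorem~\ref{th:constasp}. Both are valid, and both deliver the required uniformity over admissible $\bm{\gamma},\bm{\kappa}$.
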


\begin{proof}
For all $\bm{\pi}^* \in \bm{\Pi}_{\rm eq}$,
\begin{equation}
\label{eq:brwiel1}
A_{\bm{\gamma, \kappa}} (\bm{\pi}^*,\bm{\pi}^* )  \geq \prod_i (1 - \max\{\gamma^i,\kappa^i\})
\end{equation}
Let $L_{\bm{\pi}}<|\bm{\Pi}|$ be the length of a multi-DM strict best reply path of minimal length from $\bm{\pi} \in \bm{\Pi}\setminus \bm{\Pi}_{\rm eq}$ to some $\tilde{\bm{\pi}}\in \bm{\Pi}_{\rm eq}$, and $L:=\max_{\bm{\pi} \in \bm{\Pi} \setminus \bm{\Pi}_{\rm eq}} L_{\bm{\pi}}$.
For any $\bm{\pi} \not\in  \bm{\Pi}_{\rm eq}$, consider a path $\bm{\pi} = \bm{\pi}_0, \bm{\pi}_1,\dots, \bm{\pi}_L$ where $\bm{\pi}_0, \bm{\pi}_1,\dots, \bm{\pi}_{L_{\bm{\pi}}}$ is a multi-DM strict best reply path and $\bm{\pi}_{L_{\bm{\pi}}}=\cdots=\bm{\pi}_L = \tilde{\bm{\pi}} \in \bm{\Pi}_{\rm eq}$. In each transition $\bm{\pi}_k\rightarrow \bm{\pi}_{k+1}$, some DMs switch to one of their strict best replies and the others stay put. Therefore, from any $\bm{\pi} \not\in \bm{\Pi}_{\rm eq}$, the IUP with $\bm{\gamma} = \bm{\kappa}  \equiv 0$ generates such a path $\bm{\pi}_0, \bm{\pi}_1,\dots, \bm{\pi}_L$ with probability at least $p_{\min}:=\prod_{i=1}^N\min\{\lambda^i,(1-\lambda^i)/|\Pi^i|\}^L\in(0,1)$. By taking $\gamma^i>0$, $\kappa^i>0$ into account, this leads to
\begin{equation}
\label{eq:brwiel2}
\sum_{\tilde{\bm{\pi}}\in\bm{\Pi}_{\rm eq}} (A_{\bm{\gamma, \kappa}})^L (\bm{\pi},\tilde{\bm{\pi}})  \geq p_\min \prod_i (1 - \max\{\gamma^i,\kappa^i\})^L
\end{equation}
for all $\bm{\pi} \in \bm{\Pi} \setminus \bm{\Pi}_{\rm eq}$. Writing $A = A_{\bm{\gamma, \kappa}}$, from (\ref{eq:brwiel1})-(\ref{eq:brwiel2}), we have, for all $k\in\mathbb{N}$,
\begin{align*}
(\mu_0 A^{k+L})( \bm{\Pi} \setminus \bm{\Pi}_{\rm eq} ) \leq & L\sum_i \max\{\gamma^i,\kappa^i\} \\
& + (\mu_0 A^k)( \bm{\Pi} \setminus \bm{\Pi}_{\rm eq} )(1 -  p_\min).
\end{align*}
This leads to, for all $j$, $k\in\mathbb{N}$,
$$(\mu_0 A^{k+jL})(\bm{\Pi} \setminus \bm{\Pi}_{\rm eq} ) \leq  L\sum_i \max\{\gamma^i,\kappa^i\}/p_\min + (1 -  p_\min)^j.$$
Since $|1-p_\min|<1$, the desired result follows.
\end{proof}

For small experimentation probabilities, the IUP under Assumption~\ref{as:iup2} leads to equilibrium policies in the long run. We will use this to show that Algorithm~\ref{al:main} under Assumptions~\ref{noTransientStates}-\ref{as:iup2} has the same long run behavior.

For weakly acyclic games, decentralized learning algorithms which assign arbitrarily high probabilities to equilibrium policies in the long run are presented in \cite{AY2017}. However, these algorithms do not provide any guarantee on achieving team-optimality when implemented in teams or common interest games. We now strengthen a result of \cite{AY2017} with respect to team-optimality.

\begin{theorem} \label{weaklyTheorem}
Consider a weakly acyclic game in which each DM$^i$ uses Algorithm~\ref{al:main}, and let Assumptions~\ref{noTransientStates}-\ref{as:iup2} hold. For any $\epsilon > 0$, there exist
\begin{gather*}
\tilde{\kappa}_{\epsilon} \in (0,1), \quad
\tilde{\gamma}_{\epsilon}(\bm{\kappa}) \in (0,1), \\
\tilde{W}_{\epsilon}(\bm{\gamma, \kappa}) \in \mathbb{N}_+, \quad
\tilde{T}_{\epsilon}( \bm{\gamma, \kappa} , W_\max) \in \mathbb{N}_+
\end{gather*}
where $W_{\rm max} = \max_i W^i$, such that if, for all $i$, $k\in \mathbb{N}$,
\begin{gather*}
\kappa^i  \in(0,\tilde{\kappa}_{\epsilon}), \quad \gamma^i \in (0, \tilde{\gamma}_{\epsilon}(\bm{\kappa}) ), \\
W^i \geq \tilde{W}_{\epsilon}( \bm{\gamma, \kappa}), \quad T_k \geq \tilde{T}_{\epsilon}( \bm{\gamma, \kappa }, W_\max)
\end{gather*}
then
\begin{equation}
\label{eq:wt}
\liminf_{k\in\mathbb{N}} \P ( \bm{\pi}_k \in \bm{\Pi}_{\rm eq} ) \geq 1 - \epsilon.
\end{equation}
Moreover, if the game is a common interest game, then $\bm{\Pi}_{\rm eq}$ can be replaced by $\bm{\Pi}_{\rm opt}$ in (\ref{eq:wt}).
\end{theorem}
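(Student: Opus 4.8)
The plan is to mirror, at the level of the learning algorithm, the idealized analysis carried out in Lemma~\ref{IdealizedProcessLemma} and Lemma~\ref{IdealizedProcessLemma2}, using the two-timescale separation that is at the heart of Theorem~\ref{theorem1}. First I would recall the key structural fact established in the proof of Theorem~\ref{theorem1} (Appendix~A): under Assumptions~\ref{noTransientStates}--\ref{suffSmallAssumption}, once the exploration phase length $T_k$ is taken large enough, the Q-learning run inside the $k^{th}$ exploration phase is, with probability uniformly close to $1$, accurate enough that (i) the estimated best-reply set $\BR_k^i$ coincides with the true $\BR^i(\bm{\pi}_k^{-i})$, and (ii) the cost scores $S_k^i$ and hence the aspiration comparisons $S_k^i \le \Lambda_k^i$ reproduce the indicator event $\{\bm{\pi}_k \in \bm{\Pi}_{\rm opt}\}$ well enough that the baseline-policy update kernel is within total variation $\eta$ of the IUP transition kernel, for any prescribed $\eta>0$ and all $T_k$ sufficiently large. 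This is exactly the content that makes Algorithm~\ref{al:main} a two-timescale approximation of the IUP; under Assumption~\ref{as:iup2} the relevant IUP is the one analyzed in Lemma~\ref{IdealizedProcessLemma2}.

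Given that approximation, the argument for the first claim \eqref{eq:wt} is the standard perturbed-Markov-chain estimate: let $A_{\bm{\gamma,\kappa}}$ be the IUP transition matrix on the finite set $\bm{\Pi}$, and let $\hat{A}_k$ be the (time-inhomogeneous, since $T_k$ may vary) transition matrix of $\{\bm{\pi}_k\}$ under Algorithm~\ref{al:main}. By Lemma~\ref{IdealizedProcessLemma2}, choose $\tilde\kappa_\epsilon$ and $\tilde\gamma_\epsilon(\bm\kappa)$ small so that for the block length $L\bar m$ the idealized chain satisfies $(\mu_0 A_{\bm{\gamma,\kappa}}^{\bar m})(\bm{\Pi}_{\rm eq}) \ge 1-\epsilon/2$ uniformly in $\mu_0$. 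Since $\bm{\Pi}$ is finite, propagating the per-step total-variation error $\eta$ over $\bar m$ steps costs at most $\bar m\,\eta$ in the probability of the event $\{\bm{\pi}_k\in\bm{\Pi}_{\rm eq}\}$; taking $\eta$ small (equivalently, $T_k \ge \tilde T_\epsilon$ large, and $W^i \ge \tilde W_\epsilon$ large so that unattainable cost scores are flushed from the finite window within the block) absorbs this into the remaining $\epsilon/2$ budget. Iterating this block estimate and using that the idealized chain forgets its initial condition geometrically (Lemma~\ref{IdealizedProcessLemma2}) gives $\liminf_k \P(\bm{\pi}_k\in\bm{\Pi}_{\rm eq}) \ge 1-\epsilon$. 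For the final sentence, observe that in a common interest game that is also weakly acyclic, Lemma~\ref{QfactorsFact} gives the score separation between $\bm{\Pi}_{\rm opt}$ and $\bm{\Pi}\setminus\bm{\Pi}_{\rm opt}$, so Theorem~\ref{theorem1} already yields $\liminf_k \P(\bm{\pi}_k\in\bm{\Pi}_{\rm opt}) \ge 1-\epsilon$; since $\bm{\Pi}_{\rm opt}\subseteq\bm{\Pi}_{\rm eq}$ and Assumption~\ref{as:iup2} is a legitimate choice of $\{h^i\}$ (it does not interfere with the $\bm\gamma\ll\bm\kappa$ mechanism that drives the process into $\bm{\Pi}_{\rm opt}$), the same parameter regime gives both conclusions simultaneously, and $\bm{\Pi}_{\rm eq}$ may be replaced by $\bm{\Pi}_{\rm opt}$.

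Concretely, the steps in order are: (1) set $\tilde\kappa_\epsilon$ from Lemma~\ref{IdealizedProcessLemma2} (for the weakly-acyclic part) and, if the game is common interest, intersect with the $\bar\kappa$-type bound implicit in Theorem~\ref{theorem1}; (2) set $\tilde\gamma_\epsilon(\bm\kappa) = \min\{\bar\gamma_\epsilon(\bm\kappa), \tilde{\text{(Lemma~\ref{IdealizedProcessLemma2})}}\}$; (3) choose the window bound $\tilde W_\epsilon(\bm{\gamma,\kappa})$ large enough that any transiently-low cost score produced by a bad Q-learning run is discarded from all DMs' windows within $\bar m$ exploration phases, as in the proof of Theorem~\ref{theorem1}; (4) choose $\tilde T_\epsilon(\bm{\gamma,\kappa},W_\max)$ so that the per-exploration-phase approximation error $\eta$ satisfies $\bar m\,\eta < \epsilon/2$; (5) run the block/perturbation estimate above. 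I expect the main obstacle to be step (3)--(4): unlike the clean setting of Lemma~\ref{IdealizedProcessLemma2}, the event that the learning dynamics mimic the IUP over an entire block of $\bar m$ exploration phases is not a single-step event, because an unattainably low aspiration level set from one noisy exploration phase can contaminate the comparison $S_k^i\le\Lambda_k^i$ for up to $W^i$ subsequent phases; one must show the finite-window reset mechanism, together with $\rho^i<\bar\rho$ and $\delta^i<\bar\delta$, confines this contamination to a vanishing fraction of phases so that the block coupling to $A_{\bm{\gamma,\kappa}}^{\bar m}$ holds with probability $\ge 1-\epsilon/2$ uniformly in $k$. This is precisely the delicate part of Appendix~A, which I would invoke rather than redo here; the rest of the proof is the routine perturbed-finite-Markov-chain bookkeeping sketched above.
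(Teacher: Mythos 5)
Your overall architecture is the paper's: couple the policy process to the idealized chain of Lemma~\ref{IdealizedProcessLemma2} over blocks of $\bar m$ exploration phases, control the per-phase kernel error by taking $T_k$ large, and obtain the common-interest addendum by checking that the parameter choices also satisfy the hypotheses of Theorem~\ref{theorem1}. The block-propagation step you describe informally (``propagating the per-step total-variation error $\eta$ over $\bar m$ steps costs at most $\bar m\,\eta$'') is exactly the paper's Lemma~\ref{irreducibleMatrixApproxLemma2}, and your choices of $\tilde\kappa_\epsilon$, $\tilde\gamma_\epsilon(\bm\kappa)$, and $\tilde T_\epsilon$ track the paper's.

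There is, however, one genuine misstep in how you justify the kernel approximation for a \emph{general} weakly acyclic game. You argue that the one-step kernel of Algorithm~\ref{al:main} is close to the IUP kernel because ``the aspiration comparisons $S_k^i \le \Lambda_k^i$ reproduce the indicator event $\{\bm{\pi}_k \in \bm{\Pi}_{\rm opt}\}$,'' and you then identify contamination of the aspiration window as the main obstacle, proposing to invoke the Appendix-A machinery to handle it. That machinery (the events $G_\ell$, $H_\ell$, and the equivalence $S^i_k \le \Lambda^i_k \iff \bm{\pi}_k \in \bm{\Pi}_{\rm opt}$) rests on Lemma~\ref{QfactorsFact}, which requires the common-interest structure; in a general weakly acyclic game $\bm{\Pi}_{\rm opt}$ may be empty and the aspiration comparison tracks nothing in particular. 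The observation the paper actually uses --- stated explicitly just before Assumption~\ref{as:iup2} --- is that under $h^i = R^{i,\lambda^i}$ the two branches of the policy update apply the \emph{same} kernel $R^{i,\lambda^i}$ and differ only in the experimentation probability ($\gamma^i$ versus $\kappa^i$), both below $\tilde\kappa_\epsilon$. Hence the aspiration comparison is irrelevant to the weakly-acyclic conclusion: the paper's bound (\ref{eq:watp1}) charges the entire discrepancy to the event that some DM experiments or some $\BR_k^i \ne \BR^i(\bm{\pi}_k^{-i})$, and only the best-reply estimation needs $T_k$ large. Your window-flushing concern is relevant only to the common-interest addendum, where (as you correctly do) one simply cites Theorem~\ref{theorem1}. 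As written, your steps (3)--(4) would stall on a weakly acyclic game that is not common interest; replacing them with the both-branches-coincide observation closes the gap.
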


\begin{proof} See Appendix~B. \end{proof}

%%%%%%%%%%%%%%%%%%%%%%%%%%%%%%%%%%%%%%%%%%%%%%%%%%
%%%%%%%%%%%%%%%%%%%%%%%%%%%%%%%%%%%%%%%%%%%%%%%%%%
%%%%%%%%%%%%%%%%%%%%%%%%%%%%%%%%%%%%%%%%%%%%%%%%%%
%%%%%%%%%%%%%%%%%%%%%%%%%%%%%%%%%%%%%%%%%%%%%%%%%%

\section{Learning with Constant Aspirations} \label{sec:constasp}

In this section, we introduce Algorithm~\ref{al:co-main}, a variant of Algorithm~\ref{al:main} in which every DM$^i$ employs a constant aspiration level $\Lambda^i \in \mathbb{R}$ throughout, i.e., $\Lambda^i_k = \Lambda^i$ for every exploration phase $k \in \mathbb{N}$. Pre-setting the aspiration levels is motivated by applications where each DM has the prior knowledge of a conservative estimate of its achievable cost. Such prior knowledge may be available to DMs, for example, from previous experience or through an initial phase of experimentation, and can be used to heuristically discern ``good'' from ``bad'' performance. One implication of this assumption is that if there is indeed a set of joint policies each simultaneously outperforming all pre-set aspiration levels (i.e., the cost estimates) and the other joint policies fail to satisfy any DM, we show that DMs using Algorithm~\ref{al:co-main} will almost surely outperform their aspiration levels in the long run (part (1)-(2) of Theorem~\ref{th:constasp}). This is the case, for example, in a common interest game when the aspiration levels are between the dominant costs and the other costs. In contrast, DMs using Algorithm~\ref{al:main} adaptively adjust their aspiration levels and achieve optimal performance but only in common interest games and in the weaker sense of eventually assigning arbitrarily high probability to the set of optimal policies (Theorem~\ref{theorem1}). In addition, unlike in Algorithm~\ref{al:main}, we characterize the long term behavior of Algorithm~\ref{al:co-main} in all games regardless of whether or not the pre-set aspiration levels are achievable. Loosely speaking, DMs using Algorithm~\ref{al:co-main} in any game are likely to use a certain minimal set of policies in the long run, which are closed under multi-agent strict best-replies (part (3)-(4) of Theorem~\ref{th:constasp}). This minimal set of policies reduces to the set $\Pi_{\rm eq}$ of equilibrium policies in any weakly-acyclic game. Thus, in part (3)-(4) of Theorem~\ref{th:constasp}, we characterize the long-term behavior of Algorithm~\ref{al:co-main}  in a manner analogous to and, in fact, more general than Theorem~\ref{weaklyTheorem}, which characterizes the long-term behavior of Algorithm~\ref{al:main}  as $\bm{\Pi}_{\rm eq}$ in weakly acyclic games.

\begin{algorithm}[h] \label{al:co-main}
\SetAlgoLined
\DontPrintSemicolon
\SetKw{Receive}{Receive}
\SetKw{parameters}{Set Parameters}
\SetKw{initialize}{Initialize}
%\SetKwFor{For}{for}{}{endfor}
\SetKwBlock{For}{for}{end} 

\parameters \;
\Indp 
$\mathbb{J}^i \subset \mathbb{R}^{\mathbb{X}} , \, \mathbb{Q}^i \subset \mathbb{R}^{ \mathbb{X}\times\mathbb{U}^i }$: compact sets \;
$\{ T_k \}_{k \geq 0}$: a sequence in $\mathbb{N}_+$ of exploration phase lengths (common to all DMs) \;
\Indp Set $t_0 = 0$ and $t_{k+1} = t_k + T_k$ for all $k \geq 0.$ \;
\Indm $\rho^i\in(0,1)$: action experimentation probability \;
$\delta^i > 0$: tolerance for sub-optimality when constructing best-reply sets \;
 $\{\gamma_n^i\}_{n\in\mathbb{N}}$, $\kappa^i$: policy experimentation probabilities \;
$\lambda^i\in (0,1)$: inertia parameter for policy update \;
$g^i, h^i \in \mathcal{P} ( \Pi^i | \Pi^i \times 2^{\Pi^i} )$: policy update kernels  \; 
$\Lambda^i \in \mathbb{R}$: an aspiration level \;
$\{\alpha_{n}^{i}\}_{n\geq0}$: step sizes such that $\alpha_{n}^{i}\in[0,1]$, $\sum_n\alpha_n^{i} = \infty$, $\sum_{n} \big(\alpha_n^{i}\big)^2 < \infty$ \;
\Indm 
\BlankLine

\initialize (arbitrary) $\pi_0^i \in \Pi^i$, $J^i_0 \in \mathbb{J}^i$, $Q_0^i\in\mathbb{Q}^i$ \\
\Receive $x_0$ \\

\For($k \geq 0$ ($k^{th}$ exploration phase{)} ){ %i  had to put { and } around ) because otherwise it misunderstands it as ending the for-loops condition. 
	\For( $t = t_k, t_k +1, \dots, t_{k+1} - 1$   ){
	Select  $u^i_t \sim (1-\rho^i) \mathbb{I}_{\pi^i_k ( x_t ) } + \rho^i \uniform ( \mathbb{U}^i)$				\;
	%$u^i_t = 	\begin{cases} 
	%			\pi^i_k ( x_t ), \quad &{\rm w.p. } \quad 1 - \rho^i \\ 
	%			u^i \sim \uniform ( \mathbb{U}^i ), \quad &{\rm w.p. } \quad \rho^i 
	%		\end{cases}$ \;
	\Receive cost $c^i( x_t, u^i_t, \textbf{u}^{-i}_t)$ \;
	\Receive state $x_{t+1} \sim P ( \cdot | x_t, \textbf{u}_t) $ \; 
	Set $m^i_t =$ number of visits to $x_t$ in $[t_k,t]$ \;
	$J^i_{t+1} (x_t) = (1 - \alpha^i_{m^i_t}) J^i_t(x_t)  + \alpha^i_{m^i_t} (c^i(x_t, \textbf{u}_t) + \beta^i J^i_t (x_{t+1}))$\;
	$J^i_{t+1} (x) = J^i_t (x) , \forall x \not= x_t$\;
%	\begin{align*}J^i_{t+1} (x_t) = &(1 - \alpha^i_{m^i_t}) J^i_t(x_t) \\ &\, + \alpha^i_{m^i_t} (c^i(x_t, \textbf{u}_t) + \beta^i J^i_t (x_{t+1}))\end{align*} \;
	Set $n_t^i =$ number of visits to $(x_t,u_t^i)$ in $[t_k, t]$ \;
	$Q_{t+1}^i(x_t,u_t^i)  =   (1-\alpha_{n_t^i}^{i})Q_t^i(x_t,u_t^i) + \alpha_{n_t^i}^{i}  \big[ c^i(x_t,u_t^i, \textbf{u}_t^{-i}) +  \beta^i \min_{v^i} Q_t^i(x_{t+1},v^i) \big]$  \; 
	 $Q_{t+1}^i(x,u^i) =   Q_t^i(x,u^i)$,   $\forall (x,u^i)\not=(x_t,u_t^i)$\; 
	 }
	${\rm BR}^i_k = \{ \pi^i \in \Pi^i : Q^i_{t^i_{k+1}} ( x, \pi^i ( x) ) \leq \min_{v^i} Q^i_{t^i_{k+1}} (x, v^i) + \delta^i, \forall x \in \mathbb{X} \}$ \;
	$\tilde{S}^i_k = \sum_{x \in \mathbb{X}} J^i_{t_{k+1}} ( x) $ \;

	%If suite template from the asynchronous paper: 	
%	\If(\tcp*[f]{Baseline policy update}){ $\pi^i_k \in \BR^i_k$ }{ $\pi^i_{k+1} \leftarrow \pi^i_k$ }
%	\Else(){
%		$\pi^i_{k+1} \leftarrow 	\begin{cases} 	
%								\pi^i_k , &{\rm w.p. } \quad \lambda^i \\
%								\pi^i \sim \uniform ( \BR^i_k ), &{\rm w.p.} \quad 1 - \lambda^i 	
%							\end{cases}$
%		\BlankLine
%	}

	\lIf(){ $\tilde{S}^i_k \leq \Lambda^i$ }{ $\pi^i_{k+1} \sim (1-\gamma^i_k) g^i( \cdot | \pi^i_k , \BR^i_k ) + \gamma^i_k \uniform ( \Pi^i )$}
	\Else(){
	$\pi^i_{k+1} \sim (1 - \kappa^i ) h^i ( \cdot | \pi^i_k , \BR^i_k ) + \kappa^i \uniform ( \Pi^i ) $
	}

	Reset $J^i_{t^i_{k+1}}, Q_{t^i_{k+1}}^i$ to any $J^i \in \mathbb{J}^i, Q^i\in\mathbb{Q}^i$  \;
}

\caption{for DM$^i$}
\end{algorithm}

The following definition is introduced to describe the long-term behavior of Algorithm~\ref{al:co-main}. \vspace*{2mm}
\begin{definition}
For any $i$, $\bm{\eta} \in \bm{\Delta}$, $\bm{\pi} \in \bm{\Pi}$, $\bm{\Lambda} \in \mathbb{R}^N$, let
\[ \tilde{S}^i(\bm{\eta}) := \sum_x J_x^i(\bm{\eta}).\]
\begin{itemize}
%new:
\item[(i)] Let $\widetilde{BR}(\bm{\pi}) :=$
\begin{align*}
 \{\tilde{\bm{\pi}} \in \bm{\Pi} :\tilde{\pi}^i\not=\pi^i \Rightarrow   \tilde{\pi}^i \ \textrm{is a strict best reply to} \ \bm{\pi}, \ \forall  i\}.
\end{align*}
%Old:
%\begin{align*}
%\widetilde{BR}(\bm{\pi}) & := \{\tilde{\bm{\pi}} \in \bm{\Pi} :\tilde{\pi}^i\not=\pi^i \Rightarrow   \tilde{\pi}^i \ \textrm{is a strict best reply} \\ & \qquad\qquad\qquad\qquad\qquad \ \  \textrm{to} \ \bm{\pi}, \ \forall  i\}.
%\end{align*}

A nonempty set of policies $\tilde{\bm{\Pi}}\subset \bm{\Pi}$ is \textit{closed under multi-DM strict best replies}, or a \textit{cumber} set, if
$$\bm{\pi} \in \tilde{\bm{\Pi}} \Rightarrow \widetilde{BR}(\bm{\pi}) \subset \tilde{\bm{\Pi}}.$$
A cumber set is minimal if it does not properly contain another cumber set.
\item[(iii)]
Let
\begin{align*}
%old version, pre Feb 2021: 
%\widetilde{BR}^{\bm{\Lambda}}(\bm{\pi}) & := \{\tilde{\bm{\pi}} \in \bm{\Pi} : \tilde{\pi}^i\not=\pi^i \Rightarrow  \tilde{S}^i(\bm{\pi}) > \Lambda^i  \ \textrm{and} \ \tilde{\pi}^i \ \textrm{is a} \\ & \qquad\qquad \qquad\qquad \ \textrm{strict best reply to} \ \bm{\pi}, \ \forall i\}.
%
%Feb 2021 reformatting: 
\widetilde{BR}^{\bm{\Lambda}}(\bm{\pi}) & := \{\tilde{\bm{\pi}} \in \bm{\Pi} : \tilde{\pi}^i\not=\pi^i \Rightarrow  \tilde{S}^i(\bm{\pi}) > \Lambda^i   \\ &\textrm{and} \ \tilde{\pi}^i \ \textrm{is a} \ \textrm{strict best reply to} \ \bm{\pi}, \ \forall i\}.
\end{align*}

A nonempty set of policies $\tilde{\bm{\Pi}} \subset \bm{\Pi}$ is \textit{closed under multi-DM strict best replies with aspiration levels $\bm{\Lambda} = \{ \Lambda^i \}_{i = 1}^N$}, or a $\bm{\Lambda}$-\textit{cumber} set, if
$$\bm{\pi} \in \tilde{\bm{\Pi}} \Rightarrow \widetilde{BR}^{\bm{\Lambda}}( \bm{\pi} ) \subset \tilde{\bm{\Pi}}.$$
A $\bm{\Lambda}$-cumber set is minimal if it does not properly contain another $\bm{\Lambda}$-cumber set.

\end{itemize}
Let $\bm{\Pi}_{\rm cumber}$ and $\bm{\Pi}_{\rm cumber}^{\bm{\Lambda}} $ denote the union of minimal cumber sets and the union of $\bm{\Lambda}$-minimal cumber sets, respectively.

\end{definition}

The repeated game ($|\mathbb{X}|=1$) with the stage cost functions shown in Figure~\ref{fig2} is a common interest game for $\beta^1=\beta^2$. The minimal cumber sets are $\{$(1,1),(2,1),(2,2),(1,2)$\}$, (which is also a strict best-reply path, and $\{(3,3)\}$, which are also the minimal $\bm{\Lambda}-$cumber sets for $\Lambda^1=\Lambda^2<7$. For $\Lambda^1=\Lambda^2\in[7,10)$, there are three minimal $\Lambda-$cumber sets:
$\{$(2,1)$\}$, $\{$(1,2)$\}$, and $\{(3,3)\}$. For $\Lambda^1=\Lambda^2\in[10,20)$, there are five minimal $\bm{\Lambda}-$cumber sets:
$\{$(1,1)$\}$, $\{$(2,1)$\}$, $\{$(2,2)$\}$, $\{$(1,2)$\}$, and $\{(3,3)\}$. For $\Lambda^1=\Lambda^2\geq20$, any singleton $\{\bm{\pi} \}$, where $\bm{\pi} \in \bm{\Pi}$, is a minimal $\bm{\Lambda}$-cumber set.
On the other hand, for $\Lambda^1\geq10$, $\Lambda^2<7$, the minimal $\bm{\Lambda}-$cumber sets are $\{$(1,1)$\}$, $\{$(2,2)$\}$, and $\{(3,3)\}$.
\begin{figure}[h]
\footnotesize
\centering
\begin{game}{4}{3}[$u_t^1$:][$u_t^2$:]
              & 1 & 2 & 3\\
1     &$10,3$               & $5,7$     & $20,20$\\
2     &$5,7$                & $10,3$    & $20,20$\\
3     &$20,20$              & $20,20$   & $0,0$
\end{game}
\caption[]{Stage cost for a two-DM game where DM$^1$ (DM$^2$) chooses a row (a column) and its cost is the first (the second) entry in the chosen cell.}
\label{fig2}
\end{figure}

Allowing only single-DM best replies in the definition of a cumber set results in the notion of a cusber set introduced in \cite{GEB:stochimitation:2014}.
The following are true, for any $\bm{\Lambda} \in\mathbb{R}^N$.
\begin{itemize}
\item $\bm{\Pi}$ is both a cumber set and a $\bm{\Lambda}$-cumber set.
\item $\bm{\pi}   \in  \bm{\Pi}_{\rm eq}$ $\Leftrightarrow$ $\{ \bm{\pi} \}$ is a (minimal) cumber set.
\item  $\bm{\pi} \in \bm{\Pi}_{\rm eq}$ $\Rightarrow$ $\{ \bm{\pi} \}$ is a (minimal) $\bm{\Lambda}$-cumber set.
\item  ($\bm{\pi} \in \bm{\Pi}$, $\tilde{S}^i( \bm{\pi} ) \leq\Lambda^i$, $\forall i$) $\Rightarrow$ $\{ \bm{\pi} \}$ is a (minimal) $\bm{\Lambda}$-cumber set.
\item There is a multi-DM strict best reply path from any $\bm{\pi} \in \bm{\Pi} \setminus \bm{\Pi}_{\rm cumber}$ to $\bm{\Pi}_{\rm cumber}$.
\item There is a multi-DM strict best reply path from any $\bm{\pi} \in \bm{\Pi} \setminus \bm{\Pi}_{\rm cumber}^{\bm{\Lambda}}$ to $\bm{\Pi}_{\rm cumber}^{\bm{\Lambda}}$.
\item $\bm{\Pi}_{\rm cumber}= \bm{\Pi}_{\rm eq} \Leftrightarrow$ the game is weakly acyclic under multi-DM strict best replies.
\end{itemize}

Let $\bar{L}_{\bm{\pi}}<| \bm{\Pi} |$ be the length of a multi-DM strict best reply path of minimal length from $\bm{\pi} \in \bm{\Pi} \setminus \bm{\Pi}_{\rm cumber}$ to some $\tilde{\bm{\pi}}\in \bm{\Pi}_{\rm cumber}$, and $\bar{L}:=\max_{\bm{\pi} \in \bm{\Pi} \setminus \bm{\Pi}_{\rm cumber}}\bar{L}_{\bm{\pi}}$.

\begin{assumption}
\label{as:constasp}
Assume, for all $i$, $\delta^i \in(0, \bar{\delta})$, $\rho^i \in (0,\rho^{\bm{\Lambda}})$, where $\bar{\delta}$, $\rho^{\bm{\Lambda}}$ are constants defined in Appendix~A,~C, respectively ($\bar{\delta}$ depends only on the game, whereas, $\rho^{\bm{\Lambda}}$ depends on the game and $\bm{\Lambda}$).
Assume further that, for all $i$, $n \in\mathbb{N}$, $\gamma_n^i\in[0,1]$, $\sum_{n \in\mathbb{N}} \gamma^i_n < \infty$, and $\kappa^i \in (0,1)$.
\end{assumption}

\begin{theorem} \label{th:constasp}
Consider a discounted stochastic game where each DM$^i$ updates its policies by Algorithm~\ref{al:co-main}, and let Assumptions~\ref{noTransientStates}~and~\ref{as:constasp} hold.
\begin{enumerate}
\item
Suppose that $g^i = R^{i, 1}$, $\forall i$, and that there exists a nonempty set $\bm{\Pi}^{\bm{\Lambda}} \subset \bm{\Pi}$ satisfying
\begin{equation}
\label{eq:PiL}
\tilde{S}^i(\bm{\pi}^*) < \Lambda^i < \tilde{S}^i(\tilde{ \bm{\pi} }), \ \forall i,  \bm{\pi}^* \in \bm{\Pi}^{\bm{\Lambda}},  \tilde{\bm{\pi}} \in \bm{\Pi} \setminus \bm{\Pi}^{\bm{\Lambda}}.
\end{equation}
Then, there exist $\tilde{T}_k\in\mathbb{N}_+$, $k\in\mathbb{N}$, such that if $T_k\geq\tilde{T}_k$, $\forall k$, then
$$\P \big(\bm{\pi}_k \rightarrow \bm{\pi}^*, \ \textrm{for some} \ \bm{\pi}^* \in \bm{\Pi}^{\bm{\Lambda}}\big)=1.$$

\item
Suppose that $g^i = R^{i, \lambda^i}$, $\forall i$, and that there exists a cumber set $\bm{\Pi}^{\bm{\Lambda}}$ satisfying (\ref{eq:PiL}). Then there exists $\tilde{T}_k\in\mathbb{N}_+$, $k\in\mathbb{N}$, such that if $T_k\geq\tilde{T}_k$, $\forall k$, then
$$\P \big( \bm{\pi}_k \rightarrow \bm{\Pi}^*, \ \textrm{for a minimal cumber set} \ \bm{\Pi}^*\subset \bm{\Pi}^{\bm{\Lambda}} \big)=1.$$

\item
Suppose that $g^i=R^{i,1}$, $h^i=R^{i,\lambda^i}$, $\forall i$. Then,
$$\liminf_{k\in\mathbb{N}} \P ( \bm{\pi}_k \in \bm{\Pi}_{\rm cumber}^{\bm{\Lambda}}) \geq 1- (\bar{L} / \bar{p}_{\min}) \sum_i \kappa^i  $$
for some $\bar{p}_{\min}\in(0,1)$ which is independent of $\sum_i \kappa^i$.

\item
Suppose that $g^i = h^i=R^{i,\lambda^i}$, $\forall i$. Then,
$$\liminf_{k\in\mathbb{N}} \P (\bm{\pi}_k \in \bm{\Pi}_{\rm cumber}) \geq 1-(\bar{L}/\bar{p}_{\min})\sum_i \kappa^i$$
for some $\bar{p}_{\min}\in(0,1)$ which is independent of $\sum_i\kappa^i$.
\end{enumerate}
\end{theorem}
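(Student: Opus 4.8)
The plan is to treat Algorithm~\ref{al:co-main} as a two–timescale approximation of an idealized Markov chain on $\bm{\Pi}$, exactly in the spirit of the analysis relating Algorithm~\ref{al:main} to the IUP. For each exploration phase $k$ I would define a \emph{good event} $G_k$ on which (a) the within-phase Q-learning recursion returns Q-factors close enough to $Q^{*i}_{\bm{\pi}_k^{-i}}$ that the estimated set $\mathrm{BR}^i_k$ equals the true best-reply set $\BR^i(\bm{\pi}_k^{-i})$ (this is where $\delta^i<\bar\delta$ enters), and (b) the policy-evaluation recursion returns $\tilde S^i_k$ close enough to $\tilde S^i(\bm{\pi}_k)$ — up to the $O(\rho^i)$ bias from action experimentation, which is exactly why $\rho^i<\rho^{\bm{\Lambda}}$ is assumed — that the test ``$\tilde S^i_k\le\Lambda^i$'' has the same outcome as ``$\tilde S^i(\bm{\pi}_k)\le\Lambda^i$'' whenever the latter is not at the boundary. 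Reusing and extending the single-phase stochastic-approximation analysis of \cite{AY2017} under Assumption~\ref{noTransientStates}, one gets $\P(G_k^c)\le\varepsilon(T_k)$ with $\varepsilon(T_k)\to0$ as $T_k\to\infty$, uniformly in $\bm{\pi}_k$. On $\bigcap_{j\ge k_0}G_j$ the sequence $\{\bm{\pi}_k\}_{k\ge k_0}$ is exactly a (time-inhomogeneous, through $\gamma^i_k$) Markov chain whose transition from $\bm{\pi}$ has each DM$^i$ draw $\pi^i_{k+1}$ from $g^i$ if $\tilde S^i(\bm{\pi})\le\Lambda^i$ and from $h^i$ otherwise, blended with uniform experimentation of probability $\gamma^i_k$ or $\kappa^i$ respectively.

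\textbf{Parts (1)--(2).} Choose $\tilde T_k$ growing fast enough that $\sum_k\varepsilon(\tilde T_k)<\infty$; Borel--Cantelli then yields a.s.\ a random phase $K$ beyond which every $G_k$ holds. By (\ref{eq:PiL}), for every $i$ the event ``DM$^i$ is unsatisfied in phase $k$'' coincides with $\{\bm{\pi}_k\notin\bm{\Pi}^{\bm{\Lambda}}\}$; hence whenever $\bm{\pi}_k\notin\bm{\Pi}^{\bm{\Lambda}}$ all DMs experiment at rate $\kappa^i>0$, so with probability at least $\prod_i(\kappa^i/|\Pi^i|)>0$ the chain jumps into the nonempty set $\bm{\Pi}^{\bm{\Lambda}}$ in one phase, and therefore enters $\bm{\Pi}^{\bm{\Lambda}}$ almost surely. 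Inside $\bm{\Pi}^{\bm{\Lambda}}$ every DM is satisfied, so the only experimentation occurs at the summable rates $\gamma^i_k$; by Borel--Cantelli the chain departs $\bm{\Pi}^{\bm{\Lambda}}$ only finitely often, so a.s.\ it is eventually confined to $\bm{\Pi}^{\bm{\Lambda}}$ with no further experimentation. From that point, in part~(1) the rule $g^i=R^{i,1}$ keeps every DM's policy unchanged, giving $\bm{\pi}_k\to\bm{\pi}^*$ for some $\bm{\pi}^*\in\bm{\Pi}^{\bm{\Lambda}}$; in part~(2) the rule $g^i=R^{i,\lambda^i}$ runs pure inertial strict-best-reply dynamics inside the cumber set $\bm{\Pi}^{\bm{\Lambda}}$, which is closed under $\widetilde{BR}$ and (being finite and nonempty) contains a minimal cumber set. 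Since any minimal cumber set meeting $\bm{\Pi}^{\bm{\Lambda}}$ is contained in it, and since distinct minimal cumber sets are disjoint, standard finite-chain absorption gives $\bm{\pi}_k\to\bm{\Pi}^*$ for a minimal cumber set $\bm{\Pi}^*\subset\bm{\Pi}^{\bm{\Lambda}}$.

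\textbf{Parts (3)--(4).} Here unsatisfied DMs keep experimenting at the constant rate $\kappa^i$, so instead of a.s.\ convergence one obtains a $\liminf$ bound by mimicking the proof of Lemma~\ref{IdealizedProcessLemma2}. In part~(3), with $g^i=R^{i,1}$ and $h^i=R^{i,\lambda^i}$, an experimentation-free transition satisfies $\bm{\pi}_{k+1}\in\widetilde{BR}^{\bm{\Lambda}}(\bm{\pi}_k)$, so $\bm{\Lambda}$-cumber sets are absorbing; by the structural fact that there is a multi-DM strict best reply path of length at most $\bar L$ from any $\bm{\pi}\notin\bm{\Pi}_{\rm cumber}^{\bm{\Lambda}}$ into $\bm{\Pi}_{\rm cumber}^{\bm{\Lambda}}$, such a path is realized with probability at least $\bar{p}_{\min}$ — a product of factors $\min\{\lambda^i,(1-\lambda^i)/|\Pi^i|\}$ along the path together with $\kappa$-independent lower bounds on the non-experimentation factors — while the per-phase probability of leaking out of $\bm{\Pi}_{\rm cumber}^{\bm{\Lambda}}$ is at most $\sum_i\kappa^i$ for all large $k$ (the $\gamma^i_k$ contributions vanish). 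The recursion $(\mu_0 A^{k+\bar L})(\bm{\Pi}\setminus\bm{\Pi}_{\rm cumber}^{\bm{\Lambda}})\le \bar L\sum_i\kappa^i+(1-\bar{p}_{\min})(\mu_0 A^k)(\bm{\Pi}\setminus\bm{\Pi}_{\rm cumber}^{\bm{\Lambda}})$ then iterates, as in Lemma~\ref{IdealizedProcessLemma2}, to $\liminf_k\P(\bm{\pi}_k\in\bm{\Pi}_{\rm cumber}^{\bm{\Lambda}})\ge 1-(\bar L/\bar{p}_{\min})\sum_i\kappa^i$. Part~(4) is identical with $g^i=h^i=R^{i,\lambda^i}$: the dynamics are plain inertial best reply and $\bm{\Lambda}$ affects only the experimentation rate, so $\widetilde{BR}$ and $\bm{\Pi}_{\rm cumber}$ replace $\widetilde{BR}^{\bm{\Lambda}}$ and $\bm{\Pi}_{\rm cumber}^{\bm{\Lambda}}$ throughout. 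Finally one transfers the bound from this idealized chain to the actual process by the same good-event accounting as in the proof of Theorem~\ref{weaklyTheorem}: take $T_k$ large enough that cumulative failures of $G_k$ add only an arbitrarily small term to the $\liminf$.

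\textbf{Main obstacle.} The policy-space combinatorics is routine: the facts about (minimal) cumber and $\bm{\Lambda}$-cumber sets (closedness under $\widetilde{BR}$/$\widetilde{BR}^{\bm{\Lambda}}$, existence of minimal ones, disjointness of distinct minimal ones, existence of bounded strict-best-reply paths into $\bm{\Pi}_{\rm cumber}$ and $\bm{\Pi}_{\rm cumber}^{\bm{\Lambda}}$) all follow from finiteness. The delicate parts are (i) showing that within a single exploration phase both the Q-iterates and the \emph{new} policy-evaluation iterates $J^i_t$ of Algorithm~\ref{al:co-main} converge with probability tending to one, uniformly over the frozen baseline $\bm{\pi}_k$ and to accuracy finer than the game- and $\bm{\Lambda}$-dependent gaps separating best from non-best replies and satisfied from unsatisfied policies (this is where $\bar\delta$ and $\rho^{\bm{\Lambda}}$ are calibrated, and where the analysis of \cite{AY2017} must be extended to cover the $J$-recursion and the constant-threshold comparison), and (ii) controlling the accumulation of good-event failures over infinitely many phases so that the idealized-chain conclusions — a.s.\ absorption in parts (1)--(2) and the $\liminf$ estimate in parts (3)--(4) — transfer to the actual process without a residual non-vanishing error.
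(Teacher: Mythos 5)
Your proposal is correct and follows essentially the same route as the paper: per‑exploration‑phase good events with summable failure probabilities (calibrated via $\bar\delta$ and $\rho^{\bm{\Lambda}}$ and the single‑phase stochastic‑approximation results of \cite{AY2017}), Borel--Cantelli to get almost‑sure eventual confinement and absorption for parts (1)--(2), and the $\bar L$‑step contraction recursion with constant $\bar p_{\min}$ (as in Lemma~\ref{IdealizedProcessLemma2}) for the $\liminf$ bounds in parts (3)--(4). The only cosmetic difference is that the paper obtains parts (1)--(2) by directly establishing $\sum_k \P(\bm{\pi}_k\notin\bm{\Pi}^{\bm{\Lambda}})<\infty$ via a one‑step recursion rather than your ``enters a.s.\ and departs only finitely often'' phrasing, and for parts (3)--(4) the good‑event terms $\epsilon_n$ are carried inside the recursion so that their contribution to the $\limsup$ is exactly zero rather than merely ``arbitrarily small.''
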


\begin{proof}
See Appendix C.
\end{proof}

Algorithm~\ref{al:co-main} prescribes each DM$^i$ to update its policy differently (using the policy update kernels $g^i$ or $h^i$ coupled with different experimentation probabilities $\gamma_k^i$ or $\kappa^i$) depending on DM$^i$'s assessment of whether its aspiration is achieved or not. The experimentation probability needs to vanish asymptotically for the former case but be positive throughout\footnote{$\kappa^i$ can be time-varying as long as it stay uniformly above zero.} for the latter case.
In practice, the experimentation probabilities for either case are envisioned to be (asymptotically) small so that the policy updates are primarily governed by $g^i$ and $h^i$. With this in mind, Theorem~\ref{th:constasp} can be interpreted as follows.

The first part of Theorem~\ref{th:constasp} assumes (i) each DM$^i$ stays with its policy when it assesses that its aspiration is achieved, (ii) each policy $\bm{\pi} \in \bm{\Pi}$ either simultaneously achieves every DM's aspiration (i.e., $\bm{\pi} \in \bm{\Pi}^{\bm{\Lambda}}$) or not a single DM's aspiration (i.e., $\bm{\pi} \not\in \bm{\Pi}^{\bm{\Lambda}}$). With this (and regardless of $h^i$), DMs converge almost surely to an aspiration achieving joint policy. Note that this does not rule out convergence to a strictly dominated policy.

The second part assumes that the aspiration achieving policies are closed under multi-DM strict best replies. That is, it assumes $\bm{\Pi}^{\bm{\Lambda}}$ is a cumber set. Under this condition (and regardless of $h^i$), DMs converge almost surely to a subset of the aspiration achieving joint policies, which is a minimal cumber set. Note that this rules out neither persistent oscillations within a minimal cumber set (inside the aspiration achieving policies) nor convergence to a set of strictly dominated policies. However, in a weakly-acyclic game (under multi-DM strict best replies), convergence to an aspiration achieving equilibrium is guaranteed; in particular, the equilibrium policies not achieving DMs' aspirations are ruled out. This implies convergence to an optimal policy in teams if the aspiration levels are between the cost of suboptimal and optimal equilibria. If $\bm{\Pi}^{\bm{\Lambda}}$ is not a cumber set, DMs can leave $\bm{\Pi}^{\bm{\Lambda}}$ through multi-DM strict best replies and the result may not hold.

Theorem~\ref{th:constasp} also predicts the long-term behavior of Algorithm~\ref{al:co-main} when the joint policies $\bm{\Pi}$ cannot be partitioned as aspiration achieving policies ($\bm{\Pi}^{\bm{\Lambda}}$) and the other policies in the sense of (\ref{eq:PiL}). The third part of Theorem~\ref{th:constasp} assumes that each DM$^i$ stays with its policy when its aspiration is achieved, otherwise best replies with inertia, i.e., $g^i=R^{i,1}$, $h^i=R^{i,\lambda^i}$. With this (and regardless of the game), DMs' long-term probability of choosing a policy in a minimal $\bm{\Lambda}$-cumber set (a minimal set that DMs cannot exit through the strict best replies of those whose aspirations are not achieved) can be arbitrarily close to one if the experimentation probabilities are sufficiently small. The fourth part assumes that each DM$^i$ always best replies with inertia (when it is not experimenting), i.e., $g^i=h^i=R^{i,\lambda^i}$. With this (and regardless of the game), DMs tend to choose policies in a minimal cumber set (the equilibria and the minimal multi DM strict best reply cycles) for small experimentation probabilities. Under the conditions of the third or the fourth part, DMs may not consistently achieve their aspirations.

\section{A Simulation Study} \label{sec:simulation}

We consider the following two DM stochastic team with $\mathbb{U}^1 = \mathbb{U}^2 = \mathbb{X} = \{1, 2\}$ and common discount factor $\beta = 0.8$. The stage cost for each state is presented in Figure~\ref{fig3}.
\begin{figure}[ht]
\footnotesize
\centering
\begin{game}{3}{2}[$u_t^1$:][$u_t^2$:][$x_t=1$]
& $1$ & $2$ \\
$1$ &$1,1$ &$3,3$ \\
$2$ &$3,3$ &$1,1$
\end{game}
\qquad
\begin{game}{3}{2}[$u_t^1$:][$u_t^2$:][$x_t=2$]
& $1$ & $2$ \\
$1$ &$10,10$ &$10,10$ \\
$2$ &$10,10$ &$13,13$
\end{game}
\caption[]{Stage cost for a two-DM game where DM$^1$ (DM$^2$) chooses a row (a column) and its cost is the first (the second) entry in the chosen cell.}
\label{fig3}
\end{figure}

$x_t=1$ is the low cost state and $x_t=2$ is the high cost state. The transition probabilities, given below, are constructed so that when DMs successfully coordinate their decisions (in a state-dependent manner) the state transitions with high probability to the low cost state. Otherwise, the state transitions with high probability to the high cost state. The transition kernel is fully described by 
\begin{align*}
&P ( 1 \vert x, a^1, a^2 ) = 0.95, \text{ if } x = a^1 = a^2 \\
&P ( 2 \vert x, a^1, a^2 )  = 0.95, \text{ if } x \not= a^1 \text{ or } a^1 \not= a^2
\end{align*}
%Below: old version, with abused notation. I'm changing it.
%\begin{align*}
%&P ( x_{t+1} = 1 \vert x_t = u^1_t = u^2_t ) = 0.95 \\
%&P ( x_{t+1} = 2 \vert x_t \not = u^1_t \ \textrm{or} \ u^1_t \not= u^2_t) = 0.95
%\end{align*}
In particular, when $x_t=2$, DMs are faced with the choice between on the one hand incurring a lower short term cost $10$ and likely remaining in the high cost state and on the other hand paying a higher short term cost $13$ with the hopes of transitioning to the low cost state and avoiding sustained high costs.

For sufficiently large discount factors (including $\beta = 0.8$ as selected), the unique team optimal policy is for both DMs to coordinate as $u_t^1=u_t^2=x_t$, for all $t\in\mathbb{N}$. %However, there are three suboptimal equilibrium policies (namely, that where both players always play action 1, that where both players always play action 2, and that where players always play the action whose label does not match the state--i.e. playing action 2 in state 1 and playing action 1 in state 2.).
However, there are three suboptimal equilibrium policies, namely (i) $u_t^1=u_t^2=1$, for all $t\in\mathbb{N}$, (ii) $u_t^1=u_t^2=2$, for all $t\in\mathbb{N}$, (iii) $u_t^1=u_t^2\not=x_t$, for all $t\in\mathbb{N}$.

We simulated Algorithm~\ref{al:main}~and~\ref{al:co-main} with the following parameter choices.
\begin{align*}
\textrm{Case A:} \  \textrm{Algorithm~\ref{al:main},} & \ h^i=R^{i,\lambda^i}, \ \lambda^i\in(0,1) \\ & \ \kappa^i=\gamma^i+0.1, \ W^i=30, \ T_k=10000 \\
\textrm{Case B:} \  \textrm{Algorithm~\ref{al:main},} & \ h^i=R^{i,\lambda^i}, \ \lambda^i=1 \\ & \ \kappa^i=1, \ W^i=50, \ T_k=5000 \\
\textrm{Case C:} \  \textrm{Algorithm~\ref{al:co-main},} & \ g^i=h^i=R^{i,\lambda^i}, \ \lambda^i\in(0,1), \Lambda^i = 30 \\ & \ \kappa^i=\gamma^i+0.2, \ T_k=7500 \\
\textrm{Case D:} \  \textrm{Algorithm~\ref{al:co-main},} & \ g^i=h^i=R^{i,\lambda^i}, \ \lambda^i=1 , \Lambda^i = 30 \\ & \ \kappa^i=\gamma^i+0.2, \ T_k=7500
\end{align*}
where the aspiration level $\Lambda^i$ used in case C and D was chosen without extensive tuning.

The algorithms performed generally as expected. The disparity across different cases owes largely to the parameter selections.
In each case, the percentage of time where the joint policies are team optimal, i.e., $\bm{\pi}_k\in \bm{\Pi}_{\rm opt}$, are shown below.

\begin{center}
\begin{tabular}{ | c | c | c | c | c |}
\hline
Case 	& $\gamma = 0.05$ 	& $\gamma = 0.01$ 	& $ \gamma = 0.005$ 	& $ \gamma = 0.001$ \\
\hline
A 		&0.638  				& 0.902				& 0.922				& 0.972		\\
\hline
B		& 0.432  				& 0.776				& 0.864				& 0.952	\\
\hline
C 		&  0.648				& 0.908				& 0.960				& 0.984	\\
\hline
D		&  0.242				& 0.564				& 0.720				& 0.914 		\\
\hline
\end{tabular}
\end{center}

As the experimentation probability $\gamma$ is reduced, the empirical frequency of the event $\bm{\pi}_k\in \bm{\Pi}_{\rm opt}$ increases and, for $\gamma=0.001$, the joint policies are team optimal for more than $90\%$ of the time. These numerical results confirm the theoretical results.

\section{Concluding Remarks} \label{sec:conclusion}

In this paper, we presented learning algorithms for stochastic teams and common interest games under a decentralized information structure in which players do not share actions with one another. While previous studies have focused on repeated games, or otherwise used a large degree of control sharing among decision makers to obtain convergence results, we have provided a method for achieving team optimality in teams and stochastic common interest without any control sharing during play and with limited prior information about the game.

The proof methods used in this paper center on approximating the true joint policy-valued stochastic process using time homogenous Markov chains through a novel Dobrushin's coefficient based analysis. The algorithms presented are amenable to further variations and can be modified as needed, and the Markov chain analysis used for the convergence guarantees can likewise be easily modified for more general applications.

We chose to focus on games with full state observations available to each agent since there are few formal results on multi-agent learning even under this simplifying assumption. The partially observed information structure, in which each player has access to only local state information, is an important and challenging direction for future research.

%%%%%%%%%%%%%%%%%%%%%%%%%%%%%%%%%%%%%%%%%%%%%%%%%%%%%%%%%%%%%%%%%%%%%%%%%%%%%%%%%%%%%%%%%%%%%%%%%%%%%%%%%%%%%%%%%%%%%%%%%%%%%%%%%%%%%%%%%%%%%%%%%%%%%%%%%%%%%%%%%%%%%%%%%%%%%%%%%%%%%%%%%%%%%%%%%%%%%%%%%%%%%%%%%%%%%%%%%%%%%%%%%%%%%%%%%%%%%%%%%%%%%%%%%%%%%%%%%%%%%%%%%%%%%%%%%%%%%%%%%%%%%%%%%%%%%%%%%%%%%%%%%%%%%%%%%%%%%%%%%%%%%%%%%%%%%%%%%%%%%%%%%%%%%%%%%%%%%%%%%%%%%%%%%%%%%%%%%%%%%%%%%%%%%%%%%%%%%%%%%%%%%%%%%%%%%%%%%%%%%%%%%%%%%%%%%%%%%%%%%%%%%%%%%%%%%%%%%%%%%%%%%%%%%%%%%%%%%%%%%%%%%%%%%%%%%%%%%%%%%%%%%

\section*{Appendix A: Proof of Theorem~\ref{theorem1}}
Let $\sigma(A)\in[0,1]$ denote the Dobrushin coefficient  of an $n \times n$ right stochastic matrix $A$, defined as \cite{dobrushin}
\begin{equation}
\label{eq:dobrushin}
\sigma(A) := \min_{i, k \in \{ 1, \dots, n \}}  \sum_{j = 1}^n \min \{ A(i,j) , A(k, j) \}.
\end{equation}

\begin{lemma} \label{irreducibleMatrixApproxLemma}
Consider an $n\times n$ right stochastic matrix $A$ with $\sigma(A) > 0$, and a sequence of $n\times n$ right stochastic matrices $\{ A_k \}_{k \in \mathbb{N}}$.
For any $\epsilon \in(0,1)$, if
\begin{equation}
\label{eq:Ak-A}
\sup_{k\in\mathbb{N}} \| A_k - A \|_{\infty} \leq \tau := \frac{\sigma(A) \epsilon}{2n}
\end{equation}
then, for any probability vector $\mu_0$ of dimension $n$,
\[
\limsup_{k\in\mathbb{N}} \| \mu_0 A_0 \cdots A_k -  \mu^* \|_1 \leq \epsilon
\]
where $\mu^*$ is the unique probability vector satisfying $\mu^*=\mu^* A$.
\end{lemma}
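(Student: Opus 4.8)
The plan is to control the total variation distance between the inhomogeneous product $\mu_0 A_0 \cdots A_k$ and the invariant vector $\mu^*$ by splicing in the homogeneous chain $A$ and exploiting the contraction property of $\sigma(A) > 0$. First I would recall the two standard facts about the Dobrushin coefficient: (a) for any two probability vectors $\nu, \nu'$ and any right stochastic matrix $B$, one has $\|\nu B - \nu' B\|_1 \le (1 - \sigma(B)) \|\nu - \nu'\|_1$; and (b) $\sigma$ is supermultiplicative in the sense that $1 - \sigma(B_1 B_2) \le (1-\sigma(B_1))(1-\sigma(B_2))$, so in particular $\sigma(A^m) \to 1$ as $m \to \infty$. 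Fact (a) applied with $B = A$ gives that $A$ has a unique invariant probability vector $\mu^*$ and that $\mu^* = \mu^* A$.

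The core estimate is a one-step (or one-block) perturbation bound. Write $\nu_k := \mu_0 A_0 \cdots A_{k-1}$ (with $\nu_0 = \mu_0$). Then
\begin{align*}
\|\nu_{k+1} - \mu^*\|_1 &= \|\nu_k A_k - \mu^* A\|_1 \\
&\le \|\nu_k A_k - \nu_k A\|_1 + \|\nu_k A - \mu^* A\|_1 \\
&\le \|A_k - A\|_{\text{op},1} + (1 - \sigma(A))\|\nu_k - \mu^*\|_1,
\end{align*}
where I am using that $\nu_k$ is a probability vector to bound $\|\nu_k(A_k - A)\|_1$ by the maximum absolute row sum of $A_k - A$, which is at most $n\|A_k - A\|_\infty \le n\tau = \sigma(A)\epsilon/2$. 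Unrolling this linear recursion with contraction factor $(1-\sigma(A)) \in [0,1)$ and constant forcing term $\sigma(A)\epsilon/2$ yields
\[
\limsup_{k} \|\nu_k - \mu^*\|_1 \le \frac{\sigma(A)\epsilon/2}{1 - (1-\sigma(A))} = \frac{\sigma(A)\epsilon/2}{\sigma(A)} = \frac{\epsilon}{2} \le \epsilon,
\]
which is even slightly stronger than claimed. (If $\sigma(A) = 1$ the recursion terminates in one step and the bound is immediate.)

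The main obstacle — really the only place requiring care — is the passage from $\|\nu_k(A_k - A)\|_1$ to the uniform bound $\tau$: one must be careful that $\|\cdot\|_\infty$ here denotes the max-row-sum (induced $\infty$) norm, or else absorb the dimension factor $n$ correctly, which is exactly why the threshold $\tau = \sigma(A)\epsilon/(2n)$ carries the factor $n$. A secondary subtlety is that the bound (a) requires $\nu_k$ and $\mu^*$ to be genuine probability vectors; $\nu_k$ is, since each $A_j$ is right stochastic and $\mu_0$ is a probability vector, and $\mu^*$ is by construction. With these points pinned down the argument is a routine geometric-series estimate and no fixed-point or compactness machinery beyond the contraction property of $\sigma(A)$ is needed.
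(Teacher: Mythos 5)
Your proof is correct and follows the same overall strategy as the paper's: a triangle-inequality splice of the one-step error, the Dobrushin contraction bound $\|\mu B - \nu B\|_1 \le (1-\sigma(B))\|\mu-\nu\|_1$, and an unrolled linear recursion. The one substantive difference is where you place the intermediate point in the triangle inequality. The paper writes $\|\nu_k A_k - \mu^* A\|_1 \le \|\nu_k A_k - \mu^* A_k\|_1 + \|\mu^* A_k - \mu^* A\|_1$ and contracts with the \emph{perturbed} matrix $A_k$; this forces it to first establish $|\sigma(A_k)-\sigma(A)| \le n\tau$ so that $1-\sigma(A_k) \le \xi := 1-\sigma(A)/2 < 1$ uniformly in $k$. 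You instead write $\|\nu_k A_k - \nu_k A\|_1 + \|\nu_k A - \mu^* A\|_1$ and contract with the \emph{unperturbed} matrix $A$, so the contraction factor $1-\sigma(A)$ is available immediately and the perturbation lemma for $\sigma$ is not needed; the price is only that the forcing term is now $\sup_\nu\|\nu(A_k-A)\|_1 \le n\tau$, which you bound correctly (and you rightly flag the $\|\cdot\|_\infty$ ambiguity -- the factor $n$ in $\tau$ is exactly there to cover the entrywise-max reading). Your recursion $\limsup_k\|\nu_k-\mu^*\|_1 \le n\tau/\sigma(A) = \epsilon/2$ is a factor of two sharper than the paper's $n\tau/(1-\xi) = \epsilon$, so the lemma follows a fortiori. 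The only cosmetic caveat is that you should still note (as the paper does, via Banach's fixed point theorem or simply fact (a)) that $\sigma(A)>0$ guarantees existence and uniqueness of $\mu^*$, since the lemma's statement presupposes it; you gesture at this but it deserves one explicit line.
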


\begin{proof}
Recall that $\| \mu A - \nu A \|_1 \leq (1 - \sigma(A) ) \| \mu - \nu \|_1$, for all probability vectors $\mu$, $\nu$; see \cite{dobrushin}.
Since $\sigma(A) > 0$, by Banach's fixed point theorem, there exists a unique probability vector $\mu^*$ satisfying $\mu^* = \mu^* A$, and $\lim_k\mu_0 A^k = \mu^*$, for any probability vector $\mu_0$.

From (\ref{eq:dobrushin})-(\ref{eq:Ak-A}), we have $ \sup_{k\in\mathbb{N}} \vert \sigma(A_k) - \sigma(A) \vert \leq n \tau$, which implies $\sup_{k\in\mathbb{N}} (1 - \sigma(A_k))  \leq \xi := 1 - \sigma(A)/2 .$ Note $\xi \in (0, 1)$.
We write
\begin{align*}
\| \mu_0 A_0 - \mu^* \|_1	&= \| \mu_0 A_0 - \mu^* A \|_1  \\
					&\leq \| \mu_0 A_0 - \mu^* A_0 \|_1 + \| \mu^* A_0 - \mu^* A \|_1 \\
					&\leq (1 - \sigma(A_0) ) \| \mu_0 - \mu^* \|_1 + n \tau \\
					&\leq \xi \| \mu_0 - \mu^* \|_1 + n \tau
\end{align*}
Repeated application of these inequalities result in
\begin{align*}
\| \mu_0 A_0\dots A_{k-1} - \mu^* \|_1 \leq & \xi^k \| \mu_0 - \mu^* \|_1 + \epsilon, \quad \forall k,
\end{align*}
where $\epsilon = n \tau \frac{1}{1-\xi}$, which is consistent with (\ref{eq:Ak-A}). As $\lim_k \xi^k \| \mu_0 - \mu^* \|_1 = 0$, the lemma follows.
\end{proof}

\subsection*{Proof of Theorem \ref{theorem1}}
%\bmy{I am replacing $\xi$ in the proof of theorem 1 by $\tau$, so I can use $\xi$ in the proof of the weakly acyclic generalization.}
Let $\epsilon \in (0,1)$ and $\bm{\kappa} \in (0, 1)^N$. By Lemma \ref{IdealizedProcessLemma}, there exists $\bar{\gamma}_{\epsilon}(\bm{\kappa})$ such that $\max_i \gamma^i \in ( 0, \bar{\gamma}_{\epsilon}(\bm{\kappa}))$ implies $\mu_{\bm{\gamma,\kappa, h}}^* ( \bm{\Pi}_{\rm opt}) \geq 1 - \epsilon/2$, where $\mu_{\bm{\gamma, \kappa, h}}^*$ is the unique invariant measure of the Markov chain induced by the IUP. Assume $\max_i \gamma^i\in (0, \bar{\gamma}_{\epsilon}(\bm{\kappa}))$.

For all $k \in \mathbb{N}$, $\bm{\pi}$, $\bm{\pi}^{\prime} \in \bm{\Pi}$, we define
\begin{align}
\mu_k ( \bm{\pi} )     & := \P( \bm{\pi}_k = \bm{\pi} ) \label{eq:alg1mu}\\
A_k ( \bm{\pi}, \bm{\pi}^{\prime}) & := \P( \bm{\pi}_{k+1} = \bm{\pi}^{\prime} \vert \bm{\pi}_k = \bm{\pi})  \label{eq:alg1Ak}
\end{align}
where $\bm{\pi}_k$ is the joint baseline policy during the $k^{th}$ exploration phase of Algorithm~\ref{al:main}. Note that $\mu_{k+1} = \mu_0 A_0 \cdots A_k$. To prove the theorem, we will show $$\limsup_{k\in\mathbb{N}} \|\mu_k-\mu_{\bm{\gamma, \kappa, h}}^*\|_1 \leq \epsilon/2.$$

\noindent Due to Lemma~\ref{irreducibleMatrixApproxLemma} and $\sigma(A_{\bm{\gamma,\kappa, h}})>0$~\footnote{
		$A_{\bm{\gamma, \kappa, h}} ( \bm{\pi}, \bm{\pi}' ) \geq \prod_{i = 1}^N \min \{ \gamma^i, \kappa^i \} / | \Pi^i | > 0$, $\forall \bm{\pi}, \bm{\pi}' \in \bm{\Pi}$, due to uniform experimentation by each DM$^i$ with probability $\gamma^i>0$ or $\kappa^i>0$. By (\ref{eq:dobrushin}), this implies $\sigma ( A_{\bm{\gamma, \kappa, h}} )>0$.},
it is sufficient to show
\begin{equation} \label{CkCloseToA}
\| A_k - A_{\bm{\gamma,\kappa, h}} \|_{\infty} \leq \tau := \frac{ \sigma ( A_{\bm{\gamma,\kappa , h} }) \epsilon }{4 \vert \bm{\Pi} \vert}
\end{equation}
for all but finitely many $k\in\mathbb{N}$. We note that $\sigma ( A_{\bm{\gamma, \kappa, h} } ) > 0$ since all entries of $A_{\bm{\gamma, \kappa, h} }$ are strictly positive, as the IUP updates policies using uniform randomization with strictly positive probability owing to $\gamma^i, \kappa^i > 0$ for every DM$^i$.

To ensure (\ref{CkCloseToA}), we will introduce an event $R_k$ such that, for all $\bm{\pi}$, $\bm{\pi}^{\prime} \in \bm{\Pi}$, and all but finitely many $k\in\mathbb{N}$,
\begin{equation} \label{pR_k}
\P ( \bm{\pi}_{k+1} = \bm{\pi}^{\prime} \vert \bm{\pi}_k = \bm{\pi} , R_k ) = A_{\bm{\gamma,\kappa, h}} ( \bm{\pi}, \bm{\pi}^{\prime})
\end{equation}
and we will show that
\begin{equation} \label{conditionalR_k}
\P ( R_k \vert \bm{\pi}_k = \bm{\pi} ) \geq 1 - \tau
\end{equation}
by choosing the parameters of Algorithm~\ref{al:main} appropriately. Note that (\ref{pR_k})-(\ref{conditionalR_k}) imply (\ref{CkCloseToA}) as follows:
\begin{align*}
&A_{ \bm{\gamma,\kappa, h}} ( \bm{\pi}, \bm{\pi}^{\prime} ) - \tau \\
&\leq A_{ \bm{\gamma,\kappa, h}} ( \bm{\pi}, \bm{\pi}^{\prime} ) ( 1 - \tau ) \\
&\leq A_{\bm{\gamma,\kappa, h}} ( \bm{\pi}, \bm{\pi}^{\prime} ) \P ( R_k \vert \bm{\pi}_k = \bm{\pi} ) \\ % 2nd ineq: by (12) + positive term
& \quad + Pr ( \bm{\pi}_{k+1} = \bm{\pi}^{\prime} \vert \bm{\pi}_k = \bm{\pi}, R_k^c ) \P ( R_k^c \vert \bm{\pi}_k = \bm{\pi})  \\ %this quality holds by (11)
&= A_k ( \bm{\pi},  \bm{\pi}^{\prime} ) \\ %by the law of total probability
&\leq A_{ \bm{\gamma,\kappa, h}} ( \bm{\pi}, \bm{\pi}^{\prime} ) \cdot 1 + 1 \cdot P ( R_k^c \vert \bm{\pi}_k = \bm{\pi} ) \\
&\leq A_{ \bm{\gamma,\kappa, h}} ( \bm{\pi}, \bm{\pi}^{\prime} ) + \tau
\end{align*}
where $R_k^c$ denotes the complement of $R_k$.

Define
\begin{align}
\bar{\delta} &:= \min  \{ \vert Q^{*i}_{ \bm{\pi}^{-i} } ( x, u ) - Q^{*i}_{ \bm{\pi}^{-i} } ( x, v) \vert >0  :  \nonumber \\
& \qquad\qquad \ i, \bm{\pi}^{-i} \in \bm{\Pi}^{-i} , x \in \mathbb{X} , u, v \in \mathbb{U}^i \} \label{eq:deltabar}\\
S^i( \bm{\pi} ) & := \sum_{x \in \mathbb{X}} Q^{*i}_{ \bm{\pi}^{-i}} ( x, \pi^i(x)), \quad \forall  \bm{\pi} \in \bm{\Delta} \nonumber \\
\bar{d} &:= \frac{1}{2}\min \{ |S^i( \bm{\pi} )-S^i(\tilde{ \bm{\pi} })|>0 : i, \bm{\pi} , \tilde{\bm{\pi}}\in\bm{\Pi} \}. \nonumber
\end{align}

\noindent Let $\bar{\pi}_k^i\in\Delta^i$ denote the policy used by DM$^i$ in the $k^{th}$ exploration phase, i.e., $$\bar{\pi}_k^i(\cdot | x) :=(1-\rho^i)\mathbb{I}_{\pi_k^i(x)}+\rho^i \uniform(\mathbb{U}^i) , \quad \forall x\in\mathbb{X} .$$
\noindent Let $\bar{\rho}>0$ be such that $\max_i \rho^i\in(0,\bar{\rho})$ implies
\begin{align*}
\| Q^{*i}_{ \bm{\pi}^{-i}_k } - Q^{*i}_{\bar{\bm{\pi}}^{-i}_k } \|_\infty  & < \frac{1}{2} \min \{ \delta^i , \bar{\delta} - \delta^i\}, \quad \forall i, k\in\mathbb{N} \\
| S^i( \bm{\pi}_k) - S^i(\bar{\bm{\pi}}_k)  |  & < \frac{1}{2} \min \{ d^i, \bar{d} - d^i \}, \quad \forall i, k\in\mathbb{N}.
\end{align*}
Such $\bar{\rho}$ exists due to \cite[Lemma 3]{AY2017}.
Assume that, for all $i$, $d^i \in(0, \bar{d})$, $\delta^i \in (0, \bar{\delta})$, $\rho^i \in (0, \bar{\rho})$. Assume further that
$$W^i \geq \bar{W}_{\epsilon} (\bm{\gamma,\kappa}) := \min \{ W \in \mathbb{N} : (1-\phi)^W < \phi \tau/3 \}, \ \forall i$$
where  $\phi:=\prod_i \min\{\gamma^i/|\Pi^i|,\kappa^i/|\Pi^i|\}\in(0,1)$.

For any time $k \geq W_{\max}$, we define the event $$R_k  := F_k \cap \bigcap_{\ell =0 }^{W_\max} G_{k-\ell} \cap \bigcup_{\ell = 1}^{\bar{W}_{\epsilon}( \bm{\gamma,\kappa})} H_{k-\ell}$$ where, for any $\ell \in \mathbb{N}$, we define
\begin{align*}
F_\ell &:= \{ \| Q^i_{t_{\ell+1}} - Q^{*i}_{\bm{\pi}^{-i}_\ell } \|_\infty <\min_i \{ \delta^i , \bar{\delta} - \delta^i \} /2, \forall i \} \\
G_\ell &:=  \{ \vert S^i_\ell - S^i(\bm{\pi}_\ell) ) \vert < \min\{d^i,\bar{d}-d^i\}/2, \forall i \} \\
H_\ell &:=  \{ \bm{\pi}_\ell \in \bm{\Pi}_{\rm opt}  \}.
\end{align*}
Conditioned on $R_k$, $k \geq W_\max$, we have, for all $i$,
$$\BR^i_k = \BR^i ( \bm{\pi}^{-i}_k)$$
and
$$S^i_k \leq \Lambda^i_k  \iff \bm{\pi}_k \in \bm{\Pi}_{\rm opt}.$$
This implies (\ref{pR_k}), for all $k \geq W_\max$. Intuitively, the event $R_k$ guarantees that (i) all players have sufficiently reliable Q-factors during the $k^{th}$ exploration phase, due to $F_k$; (ii) for every DM$^i$, the estimated cost scores are sufficiently close to the true cost scores during every exploration phase in DM$^i$'s most recent memory window, by $G_k$; (iii) an optimal baseline policy was visited recently enough that all players remember its cost score, by $H_k$.

We will now show (\ref{conditionalR_k}) for sufficiently large exploration lengths $\{ T_\ell \}_\ell$. (Since the events $G_\ell$ and $F_\ell$ are defined in terms of Q-factors, this is a statement about the long term behavior of the Q-factor iterates \textit{within an exploration phase}.)
 
Note that within the $k^{th}$ exploration phase of Algorithm~\ref{al:main} (and Algorithm~\ref{al:co-main}), the environment faced by each DM$^i$, that is determined by $\bm{\pi}^{-i}_k$, is a stationary MDP (with finite state and control spaces) and satisfies the usual conditions of stochastic approximation theory.
In such a setting, it is well-known that the sequence of Q-factors produced by the standard Q-learning algorithm from any initial condition is bounded and convergent with probability one \cite{tsitsiklis}.
Since each exploration phase in Algorithm~\ref{al:main} (and in Algorithm~\ref{al:co-main}) starts with re-initialized Q-factors (and what we may call J-factors in the case of Algorithm~\ref{al:co-main}) within the compact sets $\{\mathbb{Q}^i\}_{i=1}^N$ (and $\{\mathbb{J}^i\}_{i=1}^N$), the Q-factors (and J-factors) produced by Algorithm~\ref{al:main} (and Algorithm~\ref{al:co-main}) during any exploration phase remain bounded with probability one; c.f. Lemma 1 in \cite{AY2017}.

Furthermore, Lemma~1 in \cite{AY2017} shows that, uniformly in the initial conditions within $\{\mathbb{Q}^i\}_{i=1}^N$ (and $\{\mathbb{J}^i\}_{i=1}^N$), the Q-factors (and J-factors) produced by Algorithm~\ref{al:main} (and Algorithm~\ref{al:co-main}) enter any arbitrarily small neighborhood of their limits with arbitrarily high probability at the end of any sufficiently long exploration phase.

By \cite[Lemma~4]{AY2017}, there exists $T_{\epsilon}( \bm{\gamma,\kappa},W_\max) \in \mathbb{N}_+$ %\bmy{($L$ a function of $\epsilon, (f^i)_{i=1}^N, (d^i)_{i=1}^N, (\delta^i)_{i=1}^N, W$)}
such that if $\min_{k\in\mathbb{N}} T_k \geq T_{\epsilon}(\bm{\gamma,\kappa},W_\max)$, we have
$$\P(F_k), \P(G_k) \geq 1- \phi \tau /(3 W_\max).$$
This implies, for $k \geq W_\max$,
\begin{align*}
\P ( \cap_{\ell = 0}^{W_\max} G_{k - \ell}) \geq  1-\phi\tau/3.
 \end{align*}
In addition, we have, for $k \geq W_\max$,
\begin{align*}
\P ( H_k ) \geq & 1 - (1 - \phi)^{\bar{W}_{\epsilon}(\bm{\gamma,\kappa})} \geq 1 - \phi \tau/3.
\end{align*}
All together, the preceding imply, for $k \geq W_\max$,
\begin{equation} \label{RkEquation}
\P (R_k ) \geq 1 - \phi \tau .
\end{equation}
Since $\inf_{k\in\mathbb{N}, \bm{\pi} \in \bm{\Pi}} \P ( \bm{\pi}_k =  \bm{\pi} ) \geq \phi >0 $, (\ref{RkEquation}) implies (\ref{conditionalR_k}), because  (\ref{RkEquation}) implies $\P ( R_k \cap \{ \bm{\pi}_k = \bm{\pi} \} ) \geq (1 - \tau ) \P ( \bm{\pi}_k = \bm{\pi} )$ for any $k, \bm{\pi}$.
%For us: this is because (14) implies Pr ( R_k \cap { pi_k = pi } ) \geq (1 - \tau ) Pr ( pi_k = pi ).

We have shown that (\ref{pR_k}) and (\ref{conditionalR_k}) hold. In turn, this implies (\ref{CkCloseToA}) holds, and invoking Lemma \ref{irreducibleMatrixApproxLemma} completes the proof. \hfill $\square$

\section*{Appendix B: Proof of Theorem \ref{weaklyTheorem}}

\begin{lemma} \label{irreducibleMatrixApproxLemma2}
Consider an $n\times n$ right stochastic matrix $A$, and a sequence of $n\times n$ right stochastic matrices $\{ A_k \}_{k \in \mathbb{N}}$.
For any $\epsilon \in(0,1)$, $m\in\mathbb{N}$, if
\begin{equation}
\label{eq:Ak-A2}
\sup_{k\in\mathbb{N}} \| A_k - A \|_{\infty} \leq \epsilon/(2nm)
\end{equation}
then
$$\sup_{k\in\mathbb{N},\mu_0} \| \mu_0 A_k \cdots A_{k+m-1} -  \mu_0 A^m \|_1 \leq \epsilon/2$$
where $\mu_0$ is any probability vector of dimension $n$.
\end{lemma}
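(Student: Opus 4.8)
The plan is to bound the difference $\mu_0 A_k \cdots A_{k+m-1} - \mu_0 A^m$ by a telescoping argument, replacing one factor $A_{k+j}$ by $A$ at a time. First I would write, for each $k$, the hybrid products $B_j := A_k \cdots A_{k+j-1} A^{m-j}$ for $j = 0, 1, \dots, m$, so that $B_0 = A^m$ and $B_m = A_k \cdots A_{k+m-1}$. Then $\mu_0 B_m - \mu_0 B_0 = \sum_{j=0}^{m-1} \mu_0 (B_{j+1} - B_j)$, and each summand factors as $\mu_0 A_k \cdots A_{k+j-1} (A_{k+j} - A) A^{m-j-1}$.

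The key estimate is that the $\ell_1 \to \ell_1$ operator norm (acting on the right on row vectors) of any right stochastic matrix is $1$, so multiplying a probability vector by $A_k \cdots A_{k+j-1}$ keeps it a probability vector, and right-multiplying any signed vector by the stochastic matrix $A^{m-j-1}$ does not increase its $\ell_1$ norm. Thus each summand has $\ell_1$ norm at most $\| \nu (A_{k+j} - A) \|_1$ where $\nu$ is a probability vector; and for a probability vector $\nu$ one has $\| \nu (A_{k+j} - A) \|_1 \le n \| A_{k+j} - A \|_\infty$ (bounding the $\ell_1$ norm of a row-vector--matrix product by $n$ times the max-norm of the matrix, using that $\nu$ has unit $\ell_1$ mass). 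Summing over the $m$ terms and invoking the hypothesis $\sup_k \| A_k - A \|_\infty \le \epsilon/(2nm)$ gives the bound $m \cdot n \cdot \epsilon/(2nm) = \epsilon/2$, uniformly in $k$ and in $\mu_0$.

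The only mild subtlety — and the step I would be most careful about — is getting the norm bookkeeping exactly right: confirming the precise constant in the inequality $\| \nu B \|_1 \le n \| B \|_\infty$ for a probability vector $\nu$ (this is where the factor $n$ enters, and it must match the $n$ in the hypothesis), and confirming that right-multiplication by a stochastic matrix is a $1$-Lipschitz map on $(\mathbb{R}^n, \|\cdot\|_1)$ for signed vectors, not just for probability vectors (this follows since the columns... more precisely, $\|vB\|_1 = \sum_j |\sum_i v_i B(i,j)| \le \sum_i |v_i| \sum_j B(i,j) = \|v\|_1$). Everything else is routine telescoping. I would then remark that, unlike Lemma~\ref{irreducibleMatrixApproxLemma}, this statement requires no assumption on $\sigma(A)$: it is a purely finite-horizon (length-$m$) comparison, and the constant degrades linearly in $m$, which is exactly why in the application $m$ will be chosen first (via Lemma~\ref{IdealizedProcessLemma2}) and only then the perturbation size $\|A_k - A\|_\infty$ driven small.
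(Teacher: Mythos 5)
Your proof is correct. The paper actually states Lemma~\ref{irreducibleMatrixApproxLemma2} without proof, so there is nothing to compare against line by line; your telescoping decomposition $\mu_0(B_{j+1}-B_j)=\mu_0 A_k\cdots A_{k+j-1}(A_{k+j}-A)A^{m-j-1}$, combined with the two facts that right multiplication by a stochastic matrix is $1$-Lipschitz on $(\mathbb{R}^n,\|\cdot\|_1)$ and that $\|\nu(A_{k+j}-A)\|_1\le n\|A_{k+j}-A\|_\infty$ for a probability vector $\nu$ (with $\|\cdot\|_\infty$ the entrywise max norm, as the paper uses in the proof of Lemma~\ref{irreducibleMatrixApproxLemma}, e.g.\ in the step $\|\mu^*A_0-\mu^*A\|_1\le n\tau$), is exactly the natural argument and yields the stated bound $m\cdot n\cdot\epsilon/(2nm)=\epsilon/2$ uniformly in $k$ and $\mu_0$. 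Your closing observation is also apt: no condition on $\sigma(A)$ is needed here precisely because the comparison is over a fixed finite horizon $m$, which is why in Appendix~B the horizon $\bar m$ is fixed first via Lemma~\ref{IdealizedProcessLemma2} and only then is $\sup_k\|A_k-A_{\bm{\gamma,\kappa}}\|_\infty$ driven below $\epsilon/(2|\bm{\Pi}|\bar m N)$.
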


\subsection*{Proof of Theorem \ref{weaklyTheorem}}

Let $\epsilon \in (0, 1)$. Assume
$$0<\kappa^i<\tilde{\kappa}_{\epsilon}:=\min\{\bar{\kappa}_{\epsilon},\epsilon/(4| \bm{\Pi} |\bar{m}N)\}, \quad \forall i$$
where $\bar{\kappa}_{\epsilon}$ and $\bar{m}$ are as in Lemma~\ref{IdealizedProcessLemma2}. Then, assume
$$0<\gamma^i<\tilde{\gamma}_{\epsilon}(\bm{\kappa}):=\min\{\bar{\gamma}_{\epsilon}( \bm{\kappa}),\tilde{\kappa}_{\epsilon}\}, \quad \forall i$$
where $\bar{\gamma}_{\epsilon}( \bm{\kappa})$ is as in Lemma~\ref{IdealizedProcessLemma}.
With these choices of $\bm{\gamma}$, $\bm{\kappa}$, Lemma~\ref{IdealizedProcessLemma2} holds, i.e.,
\begin{equation}
\inf_{\mu_0\in\mathcal{P}(\bm{\Pi})} (\mu_0 A_{\bm{\gamma, \kappa}}^{\bar{m}})(\bm{\Pi}_{\rm eq}) \geq 1-\epsilon/2. \label{eq:watp3}
\end{equation}

For any $k \in \mathbb{N}$, defining $A_k$ as in \eqref{eq:alg1Ak}, we have
\begin{align}
& \|A_k - A_{\bm{\gamma, \kappa,h}} \|_{\infty} \leq 1 - \prod_i  (1 - \max\{\gamma^i,\kappa^i\}) \nonumber \\
& \qquad \qquad \qquad \qquad \times \min_{\pi\in\Pi} \P \big( \BR_k^i= \BR^i( \bm{\pi}^{-i}), \forall i | \bm{\pi}_k= \bm{\pi} \big) \label{eq:watp1}
\end{align}
%where $A_k$ is as in (\ref{eq:alg1Ak}).
By \cite[Lemma~4]{AY2017}, there exists $T_\epsilon\in\mathbb{N}_+$ such that if $T_k\geq T_\epsilon$,
\begin{equation}
\min_{\bm{\pi} \in \bm{\Pi}} \P \big( \BR_k^i = \BR^i(\bm{\pi}^{-i}), \forall i | \bm{\pi}_k= \bm{\pi} \big) \geq 1 - \frac{\epsilon}{4| \bm{\Pi} |\bar{m}N}. \label{eq:watp2}
\end{equation}
Assume that, for all $i$, $k\in\mathbb{N}$,
\begin{align*}
W^i & \geq\tilde{W}_{\epsilon}( \bm{\gamma,\kappa} ) :=  \bar{W}_{\epsilon}( \bm{\gamma,\kappa} ) \\
T_k & \geq\tilde{T}_\epsilon( \bm{\gamma,\kappa}, W_\max) :=  \max\{T_\epsilon , \bar{T}_\epsilon( \bm{\gamma,\kappa},W_\max) \}
\end{align*}
where $\bar{W}_{\epsilon}( \bm{\gamma,\kappa})$, $\bar{T}_\epsilon(\bm{\gamma,\kappa},W_\max)$ are as in Theorem~\ref{theorem1}.
By (\ref{eq:watp1})-(\ref{eq:watp2}) and the assumptions on $\bm{\gamma}$, $\bm{\kappa}$, $\{T_k\}_{k\in\mathbb{N}}$ , we have
$$\sup_{k\in\mathbb{N}} \|A_k - A_{\bm{\gamma, \kappa}} \|_{\infty} \leq \epsilon/(2 | \bm{\Pi} | \bar{m}N).$$
Lemma~\ref{irreducibleMatrixApproxLemma2} implies
\begin{equation}
\label{eq:pfwad}
\sup_{k\in\mathbb{N},\mu_0\in\mathcal{P}(\Pi)} \| \mu_0 A_k \cdots A_{k+\bar{m}-1} -  \mu_0 A_{\bm{\gamma,\kappa}}^{\bar{m}} \|_1 \leq \epsilon/2.
\end{equation}
The desired result for weakly acyclic games follows from (\ref{eq:watp3})-(\ref{eq:pfwad}). Note that the parameter choices satisfy the hypothesis of Theorem~\ref{theorem1}; hence,
the results of Theorem~\ref{theorem1} also hold.

\section*{Appendix C: Proof of Theorem~\ref{th:constasp}}
\label{appx:constasp}
Let $\bar{\delta}^i$ be as in (\ref{eq:deltabar}), and let $\rho^{\bm{\Lambda}}\in(0,1)$ be such that $\rho^i\in(0,\rho^{\bm{\Lambda}})$, for all $i$, implies
$$ \| Q^{*i}_{ \bm{\pi}^{-i}_k } - Q^{*i}_{\bar{ \bm{\pi} }^{-i}_k } \|_\infty < \frac{1}{2} \min \{ \delta^i , \bar{\delta} - \delta^i \}, \quad \forall i, k\in\mathbb{N}$$
and
$$ | \tilde{S}^i( \bm{\pi}_k) - \tilde{S}^i(\bar{\bm{\pi}}_k)|  <  \min_{ \bm{\pi} \in \bm{\Pi}} | \Lambda^i - S^i(\bm{\pi})|, \quad \forall i, k\in\mathbb{N}$$
where $\bar{\pi}^i_k(\cdot | x_k)=(1 - \rho^i) \mathbb{I}_{\pi^i_k(x_k)} + \rho^i \uniform (\mathbb{U}^i)$. Such $\rho^{\bm{\Lambda}} \in(0,1)$ exists due to \cite[Lemma 3]{AY2017}.

Let $\epsilon_k\in(0,1)$, $k\in\mathbb{N}$, be such that  $\sum_{k\in\mathbb{N}} \epsilon_k < \infty$.
Due to \cite[Lemma 1]{AY2017}, there exists finite integers $\tilde{T}_k\in\mathbb{N}_+$, $k\in\mathbb{N}$ such that if $T_k\geq\tilde{T}_k$, for all $k\in\mathbb{N}$,
$$\P(|\tilde{S}_k^i-\tilde{S}^i(\bar{ \bm{\pi} }_k)|<\epsilon_k, \|Q_{t_{k+1}}^i-Q_{\bar{ \bm{\pi} }_k^{-i}}^{*i} \|_{\infty}<\epsilon_k, \forall i) \geq 1 - \epsilon_k.$$
Assume now $T_k\geq\tilde{T}_k$, for all $k\in\mathbb{N}$.
Hence, there exists $\tilde{k}\in\mathbb{N}_+$ such that, for all $i$, $k\geq\tilde{k}$,
$$\P (E_k) \geq  1 - \epsilon_k$$
where
\begin{align*}
E_k := & \{(( \bm{\pi}_k\in \bm{\Pi}^{\bm{\Lambda}}, \tilde{S}_k^i < \Lambda^i) \ \textrm{or} \ (\bm{\pi}_k \not\in \bm{\Pi}^{\bm{\Lambda}}, \tilde{S}_k^i > \Lambda^i)), \\ &  \qquad \BR_k^i = \BR^i( \bm{\pi}_k^{-i}), \ \forall i \}.
\end{align*}
\begin{enumerate}
\item
We have, for all $k\geq\tilde{k}$,
\begin{align*}
\P( \bm{\pi}_{k+1}\in \bm{\Pi}^{\bm{\Lambda}} | \bm{\pi}_{k} \in \bm{\Pi}^{\bm{\Lambda}}) & \geq (1-\epsilon_k)\prod_i (1-\gamma_k^i) \\
\P( \bm{\pi}_{k+1}\in \bm{\Pi}^{\bm{\Lambda}} | \bm{\pi}_{k} \notin \bm{\Pi}^{\bm{\Lambda}})& \geq (1-\epsilon_k) \prod_i  (\kappa^i/|\Pi^i|).
\end{align*}
This leads to, with some algebra, for all $k\geq\tilde{k}$,
\begin{align*}
\P(\bm{\pi}_{k+1}\not\in\bm{\Pi}^{\bm{\Lambda}}) \leq & \Big( 1 -  \prod_i (\kappa^i/|\Pi^i|) \Big) \P( \bm{\pi}_k\not\in \bm{\Pi}^{\bm{\Lambda}}) \\
& + \epsilon_k + \sum_i \gamma_k^i.
\end{align*}
Since $\Big|1 -  \prod_i (\kappa^i/|\Pi^i|)\Big|<1$, $\sum_{k\in\mathbb{N}} \epsilon_k < \infty$, and $\sum_{i,k\in\mathbb{N}} \gamma_k^i<\infty$, we have $\sum_{k\in\mathbb{N}} \P( \bm{\pi}_k\not\in \bm{\Pi}^{\bm{\Lambda}}) < \infty$. Borel-Cantelli Lemma implies
$$\P( \bm{\pi}_k\not\in \bm{\Pi}^{\bm{\Lambda}}, \ \textrm{for infinitely many} \ k\in\mathbb{N}) = 0.$$
Also, $\sum_{k\in\mathbb{N}} \P(( \bm{\pi}_k\in \bm{\Pi}^{\bm{\Lambda}}, \tilde{S}_k^i \geq \Lambda^i)<\infty$, hence
$$\P(\bm{\pi}_k \in \bm{\Pi}^{\bm{\Lambda}}, S_k^i \geq \Lambda^i, \ \textrm{for infinitely many} \ k\in\mathbb{N})  = 0.$$
This proves the first part. 

\item
We have, for all $k\geq\tilde{k}$,
\begin{align*}
& \P(\bm{\pi}_{k+1}\in \bm{\Pi}^{\bm{\Lambda}} \cap \bm{\Pi}_{\rm cumber} | \bm{\pi}_k \in \bm{\Pi}^{\bm{\Lambda}}\cap \bm{\Pi}_{\rm cumber}) \\
& \qquad \geq  (1-\epsilon_k) \prod_i (1-\gamma_k^i).
\end{align*}
There exists $\bar{p}_{\min}\in(0,1)$ (which depends only on $\lambda^1,\dots,\lambda^N$, $|\Pi^1|,\dots,|\Pi^N|$, $\bar{L}$) such that, for all $k\geq\tilde{k}$,
\begin{align}
\nonumber
& \P( \bm{\pi}_{k+\bar{L}} \in \bm{\Pi}^{\bm{\Lambda}} \cap \bm{\Pi}_{\rm cumber} | \bm{\pi}_k \in \bm{\Pi}^{\bm{\Lambda}} \setminus \bm{\Pi}_{\rm cumber}) \\ & \qquad \geq \bar{p}_{\min} \prod_{n=k}^{k+\bar{L}-1} (1-\epsilon_n) \prod_i (1-\gamma_n^i) \label{eq:pmin}
\end{align}
and
\begin{align*}
& \P( \bm{\pi}_{k+\bar{L}}\in \bm{\Pi}^{\bm{\Lambda}} \cap \bm{\Pi}_{\rm cumber} | \bm{\pi}_k \not \in \bm{\Pi}^{\bm{\Lambda}}) \\ & \qquad \geq \prod_i (\kappa^i/|\Pi^i|) \prod_{n=k}^{k+\bar{L}-1} (1-\epsilon_n)\prod_i(1-\gamma_n^i).
\end{align*}
This leads to, for all $k\geq\tilde{k}$,
\begin{align*}
& \P( \bm{\pi}_{k+\bar{L}} \not\in \bm{\Pi}^{\bm{\Lambda}} \cap \bm{\Pi}_{\rm cumber}) \\ & \quad  \leq      \Big ( 1- \min \Big\{\bar{p}_{\min} \ , \ \prod_i (\kappa^i/|\Pi^i|) \Big \} \Big) \\ & \qquad  \times
\P( \bm{\pi}_k \not\in \bm{\Pi}^{\bm{\Lambda}} \cap \bm{\Pi}_{\rm cumber}) \\ & \qquad \quad + \sum_{n=k}^{k+\bar{L}-1} \epsilon_n + \sum_{n=k}^{k+\bar{L}-1} \sum_i \gamma_n^i.
\end{align*}
Since $\Big|1- \min \Big\{\bar{p}_{\min} , \prod_i \frac{\kappa^i}{|\Pi^i|} \Big \}\Big|<1$, $\sum_{k\in\mathbb{N}} \epsilon_k<\infty$, and $\sum_{i,k\in\mathbb{N}} \gamma_k^i<\infty$, we have $\sum_{j\in\mathbb{N}} \P( \bm{\pi}_{k+j \bar{L}} \not\in \bm{\Pi}^{\bm{\Lambda}} \cap \bm{\Pi}_{\rm cumber}) < \infty$, for all $k\in\mathbb{N}$. This results in $\sum_{k\in\mathbb{N}} \P( \bm{\pi}_k \not\in \bm{\Pi}^{\bm{\Lambda}} \cap \bm{\Pi}_{\rm cumber}) < \infty$.
Borel-Cantelli Lemma implies
$$\P( \bm{\pi}_k\not\in \bm{\Pi}^{\bm{\Lambda}}\cap \bm{\Pi}_{\rm cumber}, \ \textrm{for infinitely many} \ k\in\mathbb{N}) = 0.$$
Also $\sum_{k\in\mathbb{N}} \P( \BR_k^i \not = \BR^i(\bm{\pi}_k^{-i}))<\infty$, hence,
\begin{align*}
& \P( \BR_k^i \not = \BR^i(\bm{\pi}_k^{-i}), \ \textrm{for infinitely many} \ k\in\mathbb{N}) = 0.
\end{align*}

This proves the second part.

\item We have, for all $k\geq\tilde{k}$,
\begin{align*}
& \P( \bm{\pi}_{k+\bar{L}}\in \bm{\Pi}_{\rm cumber}^{\bm{\Lambda}} | \bm{\pi}_k\in \bm{\Pi}_{\rm cumber}^{\bm{\Lambda}} ) \\ & \qquad \geq \prod_{n=k}^{k+\bar{L}-1} (1-\epsilon_n) \prod_i (1-\max\{\gamma_n^i,\kappa^i\}).
\end{align*}
We also have, for all $k\geq\tilde{k}$,
\begin{align*}
& \P( \bm{\pi}_{k+\bar{L}}\in \bm{\Pi}_{\rm cumber}^{\bm{\Lambda}} | \bm{\pi}_k \not \in \bm{\Pi}_{\rm cumber}^{\bm{\Lambda}} ) \\ & \qquad \geq \bar{p}_{\min}\prod_{n=k}^{k+\bar{L}-1} (1-\epsilon_n) \prod_i (1-\max\{\gamma_n^i,\kappa^i\})
\end{align*}
where $\bar{p}_{\min}\in(0,1)$ is as in (\ref{eq:pmin}).
This leads to, for all $k\geq\tilde{k}$,
\begin{align*}
\P( \bm{\pi}_{k+\bar{L}} \not \in \bm{\Pi}_{\rm cumber}^{\bm{\Lambda}}) \leq  & \sum_{n=k}^{k+\bar{L}-1} \Big(\epsilon_n + \sum_i \max\{\gamma_n^i,\kappa^i\} \Big) \\ & + (1 - \bar{p}_{\min}) \P( \bm{\pi}_k \not \in \bm{\Pi}_{\rm cumber}^{\bm{\Lambda}}).
\end{align*}
Since $|1-\bar{p}_{\min}|<1$ and $\lim_{k\in\mathbb{N}} \sum_{n=k}^{k+\bar{L}-1} \epsilon_n =0$, we have, for all $k\in\mathbb{N}$,
\begin{align*}
& \limsup_{j\in\mathbb{N}} \P  ( \bm{\pi}_{k+j \bar{L}} \not \in \bm{\Pi}_{\rm cumber}^{\bm{\Lambda}})  \\
& \qquad \leq \limsup_{j\in\mathbb{N}} \sum_{n=k+j \bar{L}}^{k+(j+1)\bar{L}-1}  \sum_i \max\{\gamma_n^i,\kappa^i\}  / \bar{p}_{\min}.
\end{align*}

This proves the third part.

\item
It follows exactly the same as the third part by replacing $\bm{\Pi}_{\rm cumber}^{\bm{\Lambda}}$ with $\bm{\Pi}_{\rm cumber}$.
\end{enumerate}

%%%%%%%%%%%%%%%%%%%%%%%%%%%%%%%%%%%%

%\section*{Acknowledgments}

% \Cref{ex:secref} shows how to reference sections.

% \begin{example}[label={ex:secref},lefthand ratio=0.4,bicolor,sidebyside,%
% listing options={style=siamlatex,basicstyle=\ttfamily\scriptsize,%
% deletetexcs={cref,Cref},{moretexcs=[2]{cref,Cref}}}]
% {Right and wrong ways to reference a section}
% Inside a sentence\dots\\
% Single: \cref{sec:intro}\\
% Range: \cref{sec:intro,sec:front,%
% sec:sec}\\
% Multiple: \cref{sec:intro,sec:sec,%
% sec:tab,sec:math,sec:thm}\\
% Appendix: \cref{sec:changes}\\

% Beginning of a sentence\dots\\
% Single: \Cref{sec:intro}\\
% Range: \Cref{sec:intro,sec:front,%
% sec:sec}\\
% Multiple: \Cref{sec:intro,sec:sec,%
% sec:tab,sec:math,sec:thm}\\
% Appendix: \Cref{sec:changes}\\

% Just don't do it this way\dots\\
% Section~\ref{sec:intro}
% \end{example}

\bibliographystyle{plain}
\nocite{*}
\bibliography{YAY-TAC-bib}

\end{document}